\newcommand{\hyp}[5]{\,\mbox{}_{#1}F_{#2}\!\left(
  \genfrac{}{}{0pt}{}{#3}{#4};#5\right)}
\newcommand\TTTT{\rule{0pt}\c@equation\m@ne$$\global\@ignoretrue}
\def\@yeqncr{\@ifnextchar [{\@xeqncr}{\@xeqncr[5pt]}}
\newcommand{\LW}{{V}}
\newcommand{\expe}{{\mathrm e}}
\newcommand{\R}{\mathbb{R}}
\newcommand{\C}{\mathbb{C}}
\newcommand{\Z}{\mathbb{Z}}
\newcommand{\N}{\mathbb{N}}
\DeclareMathOperator{\F}{{\rm F}}
\renewcommand{\r}{\mathbf{r}}
\newcommand{\0}{\mathbf{0}}
\DeclareMathOperator{\sn}{sn}
\DeclareMathOperator{\cn}{cn}
\DeclareMathOperator{\dn}{dn}
\DeclareMathOperator{\ssc}{sc}
\DeclareMathOperator{\nd}{nd}
\DeclareMathOperator{\cs}{cs}
\DeclareMathOperator{\ns}{ns}
\DeclareMathOperator{\nc}{nc}
\DeclareMathOperator{\dc}{dc}
\DeclareMathOperator{\arcsc}{arcsc}
\DeclareMathOperator{\sech}{sech}
\DeclareMathOperator{\PP}{{\sf P}}
\numberwithin{equation}{section}
\numberwithin{equation}{section}
\numberwithin{corollary}{section}
\numberwithin{remark}{section}
\numberwithin{theorem}{section}
\numberwithin{lemma}{section}
\begin{document}

\renewcommand{\PaperNumber}{***}

\FirstPageHeading

\ArticleName{Peanut harmonic expansion for a fundamental solution of Laplace's equation in flat-ring coordinates}

\ShortArticleName{Peanut harmonic expansion in flat-ring coordinates}

\Author{Lijuan Bi\,$^\ast$, Howard S. Cohl\,$^\dag\!\!\ $
and Hans Volkmer\,$^\S\!\!\ $}

\AuthorNameForHeading{L. Bi, H.~S.~Cohl, H.~Volkmer}

\Address{$^\dag$ Department of Mathematics,
The Ohio State University at Newark,
Newark, OH 43055, USA
\URLaddressD{
\href{https://newark.osu.edu/directory/bi-lijuan.html}
{https://newark.osu.edu/directory/bi-lijuan.html}
}
} 
\EmailD{bi.146@osu.edu} 

\Address{$^\dag$ Applied and Computational
Mathematics Division, National Institute of Standards
and Technology, Mission Viejo, CA 92694, USA
\URLaddressD{
\href{http://www.nist.gov/itl/math/msg/howard-s-cohl.cfm}
{http://www.nist.gov/itl/math/msg/howard-s-cohl.cfm}
}
} 
\EmailD{howard.cohl@nist.gov} 

\Address{$^\S$ Department of Mathematical Sciences,
University of Wisconsin-Milwaukee,
Milwaukee, WI 53201-0413, USA
} 
\EmailD{volkmer@uwm.edu} 

\ArticleDates{Received ?? 2021 in final form ????; Published online ????}

\Abstract{
We derive an expansion for the fundamental solution of Laplace's equation in flat-ring cyclide coordinates in three-dimensional Euclidean space. This expansion is a double series of products of functions that are harmonic in the interior and exterior of coordinate surfaces which are peanut shaped and orthogonal to surfaces which are flat-rings. These internal and external peanut harmonic functions are expressed in terms of Lam\'e-Wangerin functions. Using the expansion for the fundamental solution, we derive an addition theorem for the azimuthal Fourier component in terms of the odd-half-integer degree Legendre function of the second kind as an infinite series in Lam\'e-Wangerin functions.  We also derive integral identities over the Legendre function of the second kind 
for a product of three Lam\'e-Wangerin functions. In a limiting case we obtain the expansion of the fundamental solution in spherical coordinates.}

\Keywords{
Laplace's equation; fundamental solution; separable curvilinear coordinate system; flat-ring cyclide coordinates;
special functions; orthogonal polynomials.}

\Classification{35A08; 35J05; 33C05; 
33C45; 33C47; 33C55; 33C75}

\newcommand\TTT{\rule{0pt}{4.0ex}}
\newcommand\TT{\rule{0pt}{3.0ex}}
\newcommand\TB{\rule[-2.7ex]{0pt}{0pt}}
\newcommand\TTB{\rule[-3.7ex]{0pt}{0pt}}

\newcommand{\myref}[1]{(\ref{#1})}
\def\cprime{$'$}

\newtheorem{thm}[lemma]{Theorem}
\newtheorem{cor}[lemma]{Corollary}
\newtheorem{rem}[lemma]{Remark}
\newtheorem{lem}[lemma]{Lemma}
\newtheorem{conj}[lemma]{Conjecture}
\newtheorem{prop}[lemma]{Proposition}
\newtheorem{defn}[lemma]{Definition}

\makeatletter
\def\eqnarray{\stepcounter{equation}\let\@currentlabel=\theequation
\global\@eqnswtrue
\tabskip\@centering\let\\=\@eqncr
$$\halign to \displaywidth\bgroup\hfil\global\@eqcnt\z@
  $\displaystyle\tabskip\z@{##}$&\global\@eqcnt\@ne
  \hfil$\displaystyle{{}##{}}$\hfil
  &\global\@eqcnt\tw@ $\displaystyle{##}$\hfil
  \tabskip\@centering&\llap{##}\tabskip\z@\cr}

\section{Introduction}

There are 17 conformally inequivalent
triply-orthogonal curvilinear
coordinate systems $(\xi_1,\xi_2,\xi_3)$ which parametrize
points $\r:=(x,y,z)\in\R^3$ such that
\[
\r=(x(\xi_1,\xi_2,\xi_3),
y(\xi_1,\xi_2,\xi_3),
z(\xi_1,\xi_2,\xi_3)),
\]
which yield solution
by separation of variables for the
three variable Laplace equation \cite[Section 3.6]{Miller}
(see also \cite{Bocher}). These 17 coordinate systems can be divided into
several groups depending on the properties
of the two-dimensional surfaces which are obtained by setting one of the coordinates to a constant in its range.

Nine of the 17 coordinate systems are rotationally-invariant, that is they can
be written as a coordinate transformation
to Cartesian coordinates of a form
\[
\r=(R(\xi_1,\xi_2)\cos\phi,R(\xi_1,\xi_2)\sin\phi,z(\xi_1,\xi_2)),
\]
where $\phi\in\R$ or specifically $\phi\in[-\pi,\pi)$ to cover
all of $\R^3$. Of the nine rotationally-invariant coordinate systems, five of them are represented by coordinate surfaces which are quadric (cylindrical, spherical, parabolic, oblate spheroidal and prolate spheroidal) and the other four are represented by coordinate
surfaces which are cyclidic (toroidal, flat-ring, flat-disk and bi-cyclide). The study of the harmonics for the quadric coordinate systems is classical (yet not fully explored), however for the cyclidic coordinate systems much remains to be learned. In toroidal coordinates the separated solutions are given in terms of associated Legendre functions. However, in the three remaining rotationally-invariant cyclidic coordinate systems (flat-ring, flat-disk, bi-cyclide) the harmonic solutions are given in terms
of second-order ordinary differential equations with four regular singularities, namely those of the Heun-type \cite[Chapter 31]{NIST:DLMF} which specialize in the cyclidic case to ordinary differential equations
of Lam\'e-type \cite[Chapter 29]{NIST:DLMF} (Lam\'{e} functions, modified Lam\'{e} functions and Lam\'{e}-Wangerin functions).
This paper is the second in a series
of papers which will focus on the expansion
of the $1/r$ potential in the rotationally
invariant cyclide coordinate systems:~flat-ring cyclide, flat-disk cyclide and bi-cyclide coordinates.

In a previous paper \cite{BiCohlVolkmerA} we studied flat-ring coordinates (see \S3.1 for their definition)
originally introduced by Wangerin \cite{Wangerin1875}.
We introduced internal and external flat-ring harmonics. These are harmonic functions which are harmonic inside and outside of
coordinate surfaces which are flat-ring cyclides.
We found the expansion of the $1/r$ potential in terms of products
of internal and external flat-ring harmonics. 
We also showed that flat-ring coordinates become toroidal coordinates in the limit $k\to0$ and the
expansion of $1/r$ approaches its known expansion in products of internal and external toroidal harmonics as $k\to 0$.

In this paper we continue our work in flat-ring coordinates, however, we now consider a second
family of compact coordinate surfaces which are given by
rotationally-invariant ``peanut'' shaped cyclides (or simply peanut cyclides).
In Section 3 of this paper we introduce corresponding internal and external peanut harmonics, and 
find the expansion of $1/r$ in a series of products of internal and external peanut harmonics
(Theorem \ref{3:expansion}).
The major difference between the expansion over flat-ring surfaces as opposed to peanut surfaces is that in the peanut case we require Lam\'e-Wangerin functions in place of periodic Lam\'e
functions. Lam\'e-Wangerin functions are not as well-known as periodic Lam\'e functions.
Therefore, in Section 2 we start with collecting the properties of Lam\'e-Wangerin functions that we will require in our analysis.
In section 4 we show that flat-ring coordinates become spherical coordinates in the limit $k\to1$ and the 
expansion of $1/r$ approaches its well-known expansion in products of internal and external spherical harmonics  as $k\to1$.

We believe that our results from Sections 3 and 4 (and partially also
from Section 2) are new.

\section{Lam\'e-Wangerin functions}\label{sec2}

Let $K=K(k)$ and $K'=K'(k)=K(k')$, $k'=\sqrt{1-k^2}$, denote the complete elliptic integral of the first kind and its
corresponding complementary elliptic integral, respectively
\cite[(19.2.8-9)]{NIST:DLMF}.
The Lam\'e differential equation \cite[(29.2.1)]{NIST:DLMF} is
\begin{equation}\label{2:lame}
 \frac{{\mathrm d}^2w}{{\mathrm d}s^2}+(h-\nu(\nu+1)k^2\sn^2(s,k))w=0 ,
\end{equation}
where $0<k<1$, $\nu\ge \frac12$, and $h$ is the eigenvalue parameter.
This equation has regular singular points at $s=\pm iK'$ with exponents $\{-\nu,\nu+1\}$ at both points.
In the application to flat-ring coordinates we require solutions of \eqref{2:lame}
that are Fuchs-Frobenius solutions \cite[Chapter XVI]{Ince} at $s=\pm iK'$ belonging to the exponent $\nu+1$
at both points simultaneously. This leads to an eigenvalue problem for the Lam\'e equation.

To simplify notation we set $s=it$
and using Jacobi's imaginary transformation \cite[\S 22.6(iv)]{NIST:DLMF} we obtain the {\it modified Lam\'e equation}
\begin{equation}\label{2:modlame0}
\frac{{\mathrm d}^2w}{{\mathrm d}t^2}+\left(\lambda-\nu(\nu+1)\dc^2(t,k')\right)w=0,\quad \lambda=\nu(\nu+1)-h,
\end{equation}
where we used Glaisher's notation \cite[\S 22.2]{NIST:DLMF} for Jacobian elliptic functions.
Again to simplify notation, we change $k'$ back to $k$ and consider the equation
\begin{equation}\label{2:modlame}
\frac{{\mathrm d}^2w}{{\mathrm d}t^2}+\left(\lambda-\nu(\nu+1)\dc^2(t,k)\right)w=0.
\end{equation}
Equation \eqref{2:modlame} has regular singularities at the points $t=\pm K$ with exponents $\{-\nu,\mu+1\}$.
We impose
the boundary conditions that the solution $w(t)$, $-K<t<K$, belongs to the exponent $\nu+1$ at both singular end points, that is, $w(t)$ can be written in the form
\begin{equation}\label{2:bc1}
 w(t)=\sum_{j=0}^\infty a_j (K-t)^{j+\nu+1}\quad\text{for $t$ close to $K$},
\end{equation}
and in the form
\begin{equation}\label{2:bc2}
  w(t)=\sum_{j=0}^\infty b_j(t+K)^{j+\nu+1}\quad\text{for $t$ close to $-K$.}
\end{equation}
The conditions \eqref{2:bc1}, \eqref{2:bc2} are equivalent to the conditions that $|t\mp K|^{-1/2}w(t)$ stays bounded as $t\to \pm K$, respectively.
We call $\lambda$ an eigenvalue if there exists a nontrivial solution $w(t)$, $-K<t<K$, of \eqref{2:modlame}
which satisfies both boundary conditions \eqref{2:bc1}, \eqref{2:bc2}. An eigenfunction $w(t)$ is called a Lam\'e-Wangerin function \cite[(15.6)]{ErdelyiHTFIII} .

In \cite[\S 3]{Volkmer2018} it is shown that
the eigenvalues $\lambda$ are real and they form an increasing sequence indexed by $n\in\N_0$
with
\[
\lambda:=\Lambda_\nu^n(k),
\]
such that $\Lambda_\nu^n(k)\to\infty$
as $n\to\infty$.
Actually, in \cite{Volkmer2018} the differential equation
\begin{equation}\label{2:modlame2}
 \frac{{\mathrm d}^2w}{{\mathrm d}r^2}+(\lambda-\nu(\nu+1)\ns^2(r,k))w=0
 \end{equation}
is treated which agrees with \eqref{2:modlame} substituting $r=t+K$.
The (real-valued) eigenfunction corresponding to $\Lambda_\nu^n(k)$
is denoted by $w(t)=W_\nu^n(t,k)$.
The eigenfunctions are normalized according to
\begin{equation}
\int_{-K}^K \{W_\nu^n(t,k)\}^2\,{\mathrm d}t =1 .
\label{LWnorm}
\end{equation}
The function $W_\nu^n(t,k)$ has exactly $n$ zeros in $(-K,K)$ \cite[\S 8]{Volkmer2018},
and it is an even function for even $n$ and an odd function for odd $n$.
Clearly, $W_\nu^n(t,k)$ converges to $0$ as $t\to\pm K$.
We now state the completeness of the eigenfunctions \cite[\S 3]{Volkmer2018}.

\begin{thm}\label{2:t2}
The system $\{W_\nu^n(t,k)\}_{n=0}^\infty$ forms an orthonormal basis in $L^2(-K,K)$.
\end{thm}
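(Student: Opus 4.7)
The plan is to view \eqref{2:modlame} together with the boundary conditions \eqref{2:bc1}--\eqref{2:bc2} as a singular self-adjoint Sturm--Liouville eigenvalue problem on the open interval $(-K,K)$, and then to invoke the spectral theorem for operators with compact resolvent. Write the equation as
\[
 L w := -w'' + q(t)\,w = \lambda w, \qquad q(t) := \nu(\nu+1)\,\dc^2(t,k),
\]
and note that, since $\cn(t,k)$ has a simple zero at $t=\pm K$ while $\dn(\pm K,k)=k'$, the potential satisfies
\[
 q(t) = \frac{\nu(\nu+1)}{(K\mp t)^{2}} + O(1) \qquad \text{as }t\to\pm K,
\]
so that the Frobenius exponents at each endpoint are $-\nu$ and $\nu+1$.

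First I would carry out the limit-point classification at the two endpoints. Any local solution near $t=K$ is a linear combination of functions behaving like $(K-t)^{-\nu}$ and $(K-t)^{\nu+1}$ (with possible logarithmic factors in the borderline cases $2\nu+1\in\N$). The small solution is square-integrable near $K$ only when $\nu<\tfrac12$; under the hypothesis $\nu\ge\tfrac12$, exactly one independent solution lies in $L^{2}$ near $K$, and the same is true at $-K$ by the evenness of $q$. Hence $L$ is in the limit-point case at both endpoints, so the minimal symmetric operator associated with $L$ on $C_c^\infty(-K,K)$ admits a unique self-adjoint extension; its domain consists of those $w\in L^2(-K,K)$ with $w,w'$ locally absolutely continuous and $Lw\in L^2(-K,K)$. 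The boundary conditions \eqref{2:bc1}--\eqref{2:bc2}, which require $w$ to inherit the large Frobenius exponent $\nu+1$ at each endpoint, are equivalent to $|t\mp K|^{-1/2}w(t)$ being bounded, and therefore to membership in this canonical domain.

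Next I would establish compactness of the resolvent. Because $q(t)\to+\infty$ at both endpoints, Molchanov's criterion (or a direct bracketing comparison with a potential of the form $c(K-|t|)^{-2}$) implies that the resolvent $(L-\mu I)^{-1}$ is compact for $\mu$ in the resolvent set. The spectral theorem then yields a sequence of real eigenvalues of finite multiplicity with no finite accumulation point, together with an orthonormal basis of $L^{2}(-K,K)$ consisting of corresponding eigenfunctions. Simplicity of each eigenvalue follows from the fact that an $L^{2}$ solution is uniquely determined up to scalar by its leading Frobenius behavior at $t=K$. Enumerating the eigenvalues in increasing order reproduces the sequence $\Lambda_\nu^n(k)$, and the associated normalized real eigenfunctions are exactly the $W_\nu^n(\cdot,k)$ of \eqref{LWnorm}.

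The main obstacle is the limit-point check in the borderline cases where the two Frobenius exponents differ by a positive integer, i.e. $2\nu+1\in\N$, since a $\log(K-t)$ factor can then appear in the second Frobenius series. This does not alter the conclusion --- the leading power $(K-t)^{-\nu}$ already fails to be square-integrable near $K$ for $\nu\ge\tfrac12$, and a logarithmic multiplier grows too slowly to restore integrability --- but it is the one place where one must argue by hand rather than by the generic power-counting estimate. With this technicality dispatched, all remaining ingredients (self-adjointness, compact resolvent, Hilbert--Schmidt completeness) are standard.
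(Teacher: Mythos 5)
The paper does not actually prove Theorem \ref{2:t2}; it cites \cite[\S 3]{Volkmer2018}, where the result is established by singular Sturm--Liouville/spectral methods close in spirit to yours. Your argument is sound on the range where it applies: for $\nu\ge\frac12$ the exponent computation $q(t)=\nu(\nu+1)(K\mp t)^{-2}+O(1)$ is correct, both endpoints are limit-point, the maximal operator is the unique self-adjoint realization, $q\ge 0$ and $q\to+\infty$ at the endpoints give a form domain inside $H^1_0(-K,K)$ and hence compact resolvent, and simplicity plus the spectral theorem deliver the orthonormal basis. For that range I would only quibble that Molchanov's criterion is a half-line tool; the compact embedding $H^1_0(-K,K)\hookrightarrow L^2(-K,K)$ is the cleaner justification here.

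The genuine gap is the parameter range the paper actually needs. Theorem \ref{3:t1} invokes Theorem \ref{2:t2} for $\nu=|m|-\frac12$ with $m\in\Z$, so in particular for $\nu=-\frac12$ (the $m=0$ mode), and Lemmas \ref{2:l2}--\ref{2:l5} all work with $\nu\ge-\frac12$. For $-\frac12\le\nu<\frac12$ your limit-point classification fails: both Frobenius solutions $(K-t)^{-\nu}$ and $(K-t)^{\nu+1}$ are square-integrable near $K$, the minimal operator has deficiency indices $(2,2)$, and there is a multi-parameter family of self-adjoint extensions. The conditions \eqref{2:bc1}--\eqref{2:bc2} are then honest boundary conditions selecting one extension (the Friedrichs extension, determined by the principal solutions), and this identification has to be argued, not inherited from uniqueness. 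The case $\nu=-\frac12$ is worse still: the exponents coincide at $\frac12$, the second solution carries a genuine $\log(K-t)$ factor (your "borderline" paragraph addresses the wrong borderline --- integer exponent difference rather than the limit-circle threshold), and $q(t)=-\frac14\dc^2(t,k)\to-\infty$ at the endpoints, so both your semiboundedness and your compactness arguments lose their hypothesis; one needs a Hardy-type inequality $-w''-\tfrac14(K-t)^{-2}w\ge 0$ to recover lower semiboundedness and form-compactness. Either restrict your theorem explicitly to $\nu\ge\frac12$ (in which case it no longer suffices for Theorem \ref{3:t1}) or supply the limit-circle analysis for $-\frac12\le\nu<\frac12$.
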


It is useful to consider equation \eqref{2:modlame2} in the limit $k\to 0$:
\begin{equation}\label{2:lame0}
 \frac{{\mathrm d}^2w}{{\mathrm d}r^2}+(\lambda-\nu(\nu+1)\csc^2 r)w =0\quad\text{for $0<r<\pi$.}
\end{equation}
In this case the eigenvalues
are $\lambda=(n+\nu+1)^2$, $n\in\N_0$, with corresponding (not normalized) eigenfunctions
\[ w(r)=W_\nu^n(r)=(\sin r)^{\nu+1} P_n^{(\nu+\frac12,\nu+\frac12)}(\cos r)
\simeq
(\sin r)^{\nu+1}C_n^{\nu+1}(\cos r),
\]
where $0<r<\pi$, employing Jacobi polynomials ${P}_n^{(\alpha,\beta)}$
\cite[Table 18.3.1]{NIST:DLMF}
in the ultraspherical case $\alpha=\beta$
\cite[(4.24.2)]{Szego}
and Gegenbauer (ultraspherical)
polynomials $C_n^\mu$ \cite[(18.7.1)]{NIST:DLMF}.

We can compare \eqref{2:modlame2} with \eqref{2:lame0} using the following lemma.

\begin{lemma}\label{2:l1}
Let $k\in(0,1)$, $\omega:=\frac{\pi}{2K}$.
Then, for all $r\in(0,K]$, we have
\begin{eqnarray}\label{2:est1}
&&\hspace{-10cm}  \cs(r,k)\le  \omega\cot (\omega r),\label{jacobi1}\\
&&\hspace{-10cm}   \sn(r,k)\le  \omega^{-1}\sin (\omega r).\label{jacobi2}
\end{eqnarray}
\end{lemma}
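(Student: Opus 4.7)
The plan is to reduce both inequalities to comparisons of integral representations for $r$ in terms of the relevant Jacobi elliptic function.

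For the bound $\sn(r,k)\le \omega^{-1}\sin(\omega r)$, setting $u=\sn(r,k)$ and using $(\sn')^2=(1-\sn^2)(1-k^2\sn^2)$ gives $r=\int_0^u dt/\sqrt{(1-t^2)(1-k^2t^2)}$, while the substitution $t\mapsto\omega t$ in $\arcsin$ yields $\omega^{-1}\arcsin(\omega u)=\int_0^u dt/\sqrt{1-\omega^2 t^2}$. Since $\omega u\le 1$ and $\omega r\le\pi/2$, the inequality is equivalent to $\omega r\ge \arcsin(\omega u)$, which I would establish via the pointwise integrand bound $(1-t^2)(1-k^2t^2)\le 1-\omega^2 t^2$ on $[0,1]$. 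This reduces algebraically to $k^2 t^2 \le 1 + k^2 - \omega^2$, which is immediate from $\omega \le 1$ (equivalently $K \ge \pi/2$, a standard bound on the complete elliptic integral).

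For the bound $\cs(r,k)\le\omega\cot(\omega r)$, a direct computation gives $(\cs)'(r,k)=-\dn/\sn^2=-\sqrt{(k'^2+\cs^2)(1+\cs^2)}$, whence
\[ r = \int_v^\infty \frac{dv'}{\sqrt{(k'^2 + v'^2)(1 + v'^2)}}\quad\text{for } v = \cs(r,k),\]
and analogously $r = \int_v^\infty dv'/(\omega^2 + v'^2)$ for $v = \omega\cot(\omega r)$. Since both $\cs(r,k)$ and $\omega\cot(\omega r)$ are strictly decreasing from $\infty$ to $0$ as $r$ traverses $(0, K]$, the desired bound is equivalent to $D(v)\ge 0$ for all $v\ge 0$, where $D(v)$ is the difference of the two integrals above. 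One verifies $D(0) = K - K = 0$ and $D(\infty) = 0$, and a short calculation shows $D'(v) \ge 0 \iff P(v) := (\omega^4 - k'^2) + (2\omega^2 - 1 - k'^2)v^2 \ge 0$.

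The main obstacle is to establish the two complete-elliptic-integral estimates $\omega^4 \ge k'^2$ and $2\omega^2 \le 1 + k'^2$. Given these, $P(0) \ge 0$ while $P$ is non-increasing in $v^2$, so $D$ increases on $[0, v_*]$ and decreases on $[v_*, \infty)$, where $v_*$ is the unique zero of $P$; combined with $D(0) = D(\infty) = 0$, this forces $D \ge 0$. I would prove both estimates from the representation $K = \int_0^{\pi/2} d\theta/\sqrt{1 - k^2 \sin^2\theta}$: the Cauchy--Schwarz inequality, together with the evaluation $\int_0^{\pi/2} d\theta/(1 - k^2\sin^2\theta) = \pi/(2k')$, yields $K^2 \le \pi^2/(4k')$, i.e., $\omega^4 \ge k'^2$; Jensen's inequality applied to the convex function $x \mapsto x^{-1/2}$ with $X = 1 - k^2\sin^2\theta$ (whose average over $[0,\pi/2]$ is $(1+k'^2)/2$) yields $2K/\pi \ge \sqrt{2/(1+k'^2)}$, i.e., $2\omega^2 \le 1 + k'^2$.
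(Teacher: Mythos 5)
Your proof is correct. For the first inequality your mechanism is essentially the paper's: both arguments compare inverse-function integral representations of the two sides, observe that the difference vanishes at both ends of $[0,\infty]$, and show its derivative changes sign at most once because the relevant sign condition is linear in the squared variable with the leading coefficient of the right sign. The paper phrases this with $\arcsc(x,k)=\int_0^x\bigl((1+t^2)(1+k'^2t^2)\bigr)^{-1/2}{\mathrm d}t$ versus $\omega^{-1}\arctan(\omega x)$ (i.e., working with $\ssc=1/\cs$), and its sign condition $1+k'^2-2\omega^2+x^2(k'^2-\omega^4)>0$ is exactly your $P(v)\ge0$ under $x=1/v$. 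The one genuine input, $\omega^2>k'$, you obtain by Cauchy--Schwarz on $K=\int_0^{\pi/2}(1-k^2\sin^2\theta)^{-1/2}{\mathrm d}\theta$; the paper gets it from the arithmetic--geometric mean representation $M(1,k')=\pi/(2K)$. Your second estimate $2\omega^2\le 1+k'^2$ (via Jensen) is correct but dispensable: as in the paper, once the coefficient of $v^2$ in $P$ is known to make $P$ monotone in $v^2$, unimodality of $D$ together with $D(0)=D(\infty)=0$ already forces $D\ge0$ (and, incidentally, forces that coefficient to be negative). Where you genuinely diverge is the second inequality: the paper deduces $\sn(r,k)\le\omega^{-1}\sin(\omega r)$ from the first inequality by squaring to get $\sn(r,k)\ge\sin(\omega r)$, hence $\cn(r,k)\le\cos(\omega r)$, then $\dn(r,k)\cn(r,k)\le\cos(\omega r)$, and integrating; you instead prove it independently by the same inverse-function comparison, reducing to the pointwise bound $(1-t^2)(1-k^2t^2)\le 1-\omega^2t^2$, which needs only $K\ge\pi/2$. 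Your route is more direct and self-contained; the paper's route costs the first inequality as a prerequisite but yields the companion lower bound $\sn(r,k)\ge\sin(\omega r)$ as a byproduct.
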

\begin{proof}
(a)
The inverse function of $\ssc(r,k)$, $0\le r< K$,  is \cite[(22.15.20)]{NIST:DLMF}
\[ \arcsc(x,k):=\int_0^x \frac{{\mathrm d}t}{\sqrt{1+t^2}\sqrt{1+k'^2t^2}}\quad\text{for $x\ge 0$}.\]
Now \eqref{jacobi1} is equivalent to
\begin{equation}\label{jacobi3}
 g(x):=\omega^{-1}\arctan (\omega x)-\arcsc(x,k)\ge 0\quad\text{for $x\ge 0$.}
\end{equation}
We have $g(0)=\lim_{x\to\infty}g(x)=0$. A calculation shows that $g'(x)>0$ is equivalent
to $1+k'^2-2\omega^2+x^2(k'^2-\omega^4)>0$.
Now $k'^2-\omega^4<0$ so $g$ first increases and then decreases, establishing \eqref{jacobi3}.
Note that the inequality $\omega>\sqrt{k'}$ follows from
the fact that $\omega$ equals the arithmetic-geometric mean of $1$ and $k'$,
$M(1,k')=\frac{\pi}{2K}=\omega$ \cite[(22.20.6)]{NIST:DLMF}.\\
(b)
By squaring both sides of inequality \eqref{jacobi1} we find
\[ \ns^2(r,k)-1\le \omega^2(\csc^2 (\omega r)-1)\le \csc^2 (\omega r)-1,\]
so $\sin \omega  r\le  \sn(r,k)$ for $0\le r\le K$.
Since $\sn^2(x,k)+\cn^2(x,k)=1$, this implies
\[ \cn(r,k)\le \cos (\omega r)\quad \text{for $0\le r\le K$},\]
so
\[ \dn(r,k)\cn(r,k)\le \cos(\omega r)\quad \text{for $0\le r\le K$}.\]
Integrating from $0$ to $r$ gives  \eqref{jacobi2}.\\
\end{proof}

\begin{lemma}\label{2:l2}
Let $\omega:=\frac\pi{2K}$. If $\nu\ge 0$ then
\[ \omega^2(n+\nu+1)^2\le \Lambda_\nu^n(k)\le \nu(\nu+1)(1-\omega^2)+\omega^2(n+\nu+1)^2.\]
If $-\frac12\le \nu<0$ then
\[ \nu(\nu+1)(1-\omega^2)+\omega^2(n+\nu+1)^2\le \Lambda_\nu^n(k)\le \omega^2 (n+\nu+1)^2.\]
\end{lemma}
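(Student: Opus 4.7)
The plan is to combine Lemma~\ref{2:l1} with the minimum-maximum (Poincaré) characterization of eigenvalues of the singular Sturm--Liouville problem \eqref{2:modlame2}.

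\medskip
\noindent\textbf{Step 1 (two-sided pointwise estimate on the potential).}
From the inequality $\sn(r,k)\le\omega^{-1}\sin(\omega r)$ in Lemma~\ref{2:l1} I immediately get
\[
\omega^2\csc^2(\omega r)\le \ns^2(r,k)\qquad\text{for }0<r\le K.
\]
Squaring $\cs(r,k)\le\omega\cot(\omega r)$ and adding $1$ yields
\[
\ns^2(r,k)\le \omega^2\csc^2(\omega r)+(1-\omega^2)\qquad\text{for }0<r\le K,
\]
where $1-\omega^2\ge 0$ because $K(k)\ge\pi/2$ for $k\in(0,1)$. Both $\ns^2(r,k)$ and $\csc^2(\omega r)$ are symmetric about $r=K$ on the interval $(0,2K)$, so the same sandwich
$\omega^2\csc^2(\omega r)\le\ns^2(r,k)\le\omega^2\csc^2(\omega r)+(1-\omega^2)$ holds throughout $(0,2K)$.

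\medskip
\noindent\textbf{Step 2 (variational set-up).}
I work with equation \eqref{2:modlame2} on $(0,2K)$ under the boundary conditions of exponent $\nu+1$ at both endpoints, which is precisely the Friedrichs/Lam\'e--Wangerin selection. Let
\[
Q[w]:=\int_0^{2K}\!\!\bigl((w')^2+\nu(\nu+1)\ns^2(r,k)\,w^2\bigr)\,dr,\qquad \|w\|^2:=\int_0^{2K}w^2\,dr,
\]
and define $Q^-[w]$ and $Q^+[w]$ by replacing $\ns^2(r,k)$ with $\omega^2\csc^2(\omega r)$ and $\omega^2\csc^2(\omega r)+(1-\omega^2)$, respectively. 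The eigenvalues $\Lambda_\nu^n(k)$ are then the Poincaré values $\min_{\dim V=n+1}\max_{w\in V\setminus\{0\}}Q[w]/\|w\|^2$, and similarly for the eigenvalues $\Lambda^{\pm}_n$ of the two comparison problems.

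\medskip
\noindent\textbf{Step 3 (eigenvalues of the comparison problems).}
The reference problem $-w''+\nu(\nu+1)\omega^2\csc^2(\omega r)w=\mu w$ on $(0,2K)$, again with exponent $\nu+1$ at both endpoints, reduces under $u=\omega r$ to $-v''+\nu(\nu+1)\csc^2(u)v=(\mu/\omega^2)v$ on $(0,\pi)$, whose eigenvalues were identified in \eqref{2:lame0} as $(n+\nu+1)^2$. Therefore
\[
\Lambda^-_n=\omega^2(n+\nu+1)^2,\qquad \Lambda^+_n=\omega^2(n+\nu+1)^2+\nu(\nu+1)(1-\omega^2),
\]
the latter by the obvious additive shift of the potential.

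\medskip
\noindent\textbf{Step 4 (splitting by the sign of $\nu(\nu+1)$ and concluding).}
When $\nu\ge 0$ we have $\nu(\nu+1)\ge 0$, so Step~1 gives $Q^-[w]\le Q[w]\le Q^+[w]$ for every admissible $w$; the Poincaré principle then sandwiches $\Lambda_\nu^n(k)$ between $\Lambda^-_n$ and $\Lambda^+_n$, which is the first stated inequality. When $-\tfrac12\le\nu<0$ one has $\nu(\nu+1)\le 0$, the inequalities on the quadratic forms reverse, and the same argument produces the second stated inequality.

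\medskip
The only delicate point is justifying that the Poincaré characterization applies uniformly to the three singular operators. This is not really an obstacle because all three problems have regular singularities of the same $\csc^2$-type with the same indicial exponents $\{-\nu,\nu+1\}$, and the exponent-$(\nu+1)$ boundary condition selects the Friedrichs extension for each; completeness (Theorem~\ref{2:t2}) plus non-negativity of the potential shift $1-\omega^2$ ensures that the comparison of quadratic forms translates into the claimed comparison of eigenvalues term by term.
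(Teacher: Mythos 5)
Your proof is correct and takes essentially the same route as the paper: both start from Lemma~\ref{2:l1} to obtain the two-sided sandwich $\omega^2\csc^2(\omega r)\le \ns^2(r,k)\le \omega^2\csc^2(\omega r)+1-\omega^2$ on $(0,2K)$ and then compare \eqref{2:modlame2} with the explicitly solvable $\csc^2$ problem \eqref{2:lame0}, splitting on the sign of $\nu(\nu+1)$. The only difference is the tool used to pass from the pointwise potential comparison to the eigenvalue comparison --- the paper cites the Sturm comparison theorem \cite[\S 10.4]{Ince}, while you invoke the Poincar\'e min--max principle --- which is an interchangeable choice of standard machinery here.
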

\begin{proof}
By \eqref{jacobi1} and \eqref{jacobi2},
\[ \omega^2\csc^2(\omega r)\le \ns^2(r,k)\le \omega^2\csc^2(\omega r)+1-\omega^2\quad\text{for $0<r<2K$}.\]
The bounds for $\Lambda_\nu^n(k)$ follow from the Sturm comparison theorem
\cite[\S 10.4]{Ince} comparing \eqref{2:modlame2} with \eqref{2:lame0}.
\end{proof}

If $\nu\ge 0$ we can also estimate $\nu(\nu+1)\ns^2(r,k)\ge \nu(\nu+1)$ which leads to
\begin{equation}\label{2:lowerbound}
\nu(\nu+1)+\omega^2(n+1)^2\le\Lambda_\nu^n(k) ,
\end{equation}

\begin{lemma}\label{2:l3}
If $\nu\ge 0$, $n\in\N_0$, $-K<t<K$ then
\begin{equation}\label{est5}
 \left\{W_\nu^n(t,k)\right\}^2\le \frac{\pi}{2K}(n+\nu+1)
\end{equation}
and
\begin{equation}\label{est6}
 \dc(t,k) \left\{W_\nu^n(t,k)\right\}^2\le \frac{\pi^2}{4K}(n+\nu+1)^2 .
\end{equation}
\end{lemma}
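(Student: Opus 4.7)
The plan is to derive both inequalities from a single monotonicity property of the quadratic ``energy''
\[
F(t):=\bigl(w'(t)\bigr)^2+\bigl(\lambda-\nu(\nu+1)\dc^2(t,k)\bigr)w(t)^2,
\]
where $w:=W_\nu^n(\cdot,k)$ and $\lambda:=\Lambda_\nu^n(k)$, combined with the eigenvalue bound of Lemma~\ref{2:l2}. Differentiating and using \eqref{2:modlame} gives $F'(t)=-2\nu(\nu+1)\dc(t,k)\dc'(t,k)w(t)^2$, which is non-positive on $[0,K)$ and non-negative on $(-K,0]$, so $F$ is symmetric and attains its maximum at $t=0$. From the Frobenius expansion $w(t)\sim c\,(K-|t|)^{\nu+1}$ near the endpoints one checks that $F(\pm K)=c^2(\nu+1)\lim_{t\to\pm K}(K-|t|)^{2\nu}\ge 0$, which combined with the monotonicity yields $F\ge 0$ throughout $(-K,K)$.

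The second ingredient is an $L^2$-estimate on $w'$: integrating the ODE \eqref{2:modlame} against $w$ and using $w(\pm K)=0$ gives
\[
\int_{-K}^K(w')^2\,dt=\lambda-\nu(\nu+1)\!\int_{-K}^K\!\dc^2(t,k)\,w^2\,dt\;\le\;\lambda-\nu(\nu+1),
\]
since $\dc\ge 1$ on $(-K,K)$ and $\int_{-K}^K w^2\,dt=1$ by \eqref{LWnorm}. Lemma~\ref{2:l2} bounds the right-hand side by $\omega^2(n+\nu+1)^2$.

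For \eqref{est5} the sharp constant comes from parity: $W_\nu^n$ is either even or odd in $t$, so both $w^2$ and $(w')^2$ are even, giving $\int_0^K w^2\,dt=\tfrac12$ and $\int_0^K(w')^2\,dt\le\tfrac12(\lambda-\nu(\nu+1))$. Picking $t_0\in[0,K]$ that realises the maximum of $w^2$ on $[0,K]$ (hence the global maximum by symmetry) and using $w(K)=0$ with Cauchy--Schwarz on the shorter half-interval,
\[
w(t_0)^2=-\!\int_{t_0}^K\!2ww'\,dt\;\le\;2\sqrt{\!\int_{t_0}^K\!w^2\,dt}\sqrt{\!\int_{t_0}^K\!(w')^2\,dt}\;\le\;\sqrt{\lambda-\nu(\nu+1)}\;\le\;\omega(n+\nu+1),
\]
which is \eqref{est5}.

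For \eqref{est6} I plan to apply the same $L^\infty$-argument to $V(t):=\sqrt{\dc(t,k)}\,w(t)$, whose square is the quantity to bound. Three ingredients must be put in place: (i) $V(\pm K)=0$, which follows from $V^2\sim(K-|t|)^{2\nu+1}$; (ii) the substitution $w=V/\sqrt{\dc}$ casts \eqref{2:modlame} in the self-adjoint form $\bigl(\dc^{-1}V'\bigr)'+\dc^{-1}QV=0$ with $Q=\lambda-\nu(\nu+1)\dc^2+R$, where $R:=-\dc''/(2\dc)+3(\dc')^2/(4\dc^2)$, and the first-integral identity $(\dc')^2=(\dc^2-1)(\dc^2-k^2)$ together with the derived $\dc''=\dc(2\dc^2-1-k^2)$ reduces the needed bound $R\le 0$ to the elementary polynomial inequality $-\dc^4-(1+k^2)\dc^2+3k^2\le 0$ for $\dc\ge 1$; (iii) multiplying the self-adjoint form by $V$ and integrating produces the weighted counterpart $\int\dc^{-1}(V')^2\,dt\le\lambda-\nu(\nu+1)$, after which the parity/Cauchy--Schwarz trick of the previous paragraph, applied with weights $\dc$ and $\dc^{-1}$ on $[t_1,K]$ (where $t_1\ge 0$ maximises $V^2$), is designed to yield the extra factor $K$ separating \eqref{est6} from \eqref{est5}. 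The main obstacle is the weighted bookkeeping in (iii): the constants must combine to give exactly $\pi^2/(4K)$ uniformly in $\nu\ge 0$ and $n\in\N_0$, and the boundary case $\nu=0$---where the $\int\dc^2w^2$-bound used to control \eqref{est6} collapses---will require a separate verification using the explicit sinusoidal form $W_0^n(t,k)=\sqrt{2\omega/\pi}\sin\bigl((n+1)\omega(t+K)\bigr)$.
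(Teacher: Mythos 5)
Your argument for \eqref{est5} is correct and is essentially the paper's: integrate the equation \eqref{2:modlame} against $w$, use $\dc\ge1$ together with Lemma \ref{2:l2} to get $\int(w')^2\le \Lambda_\nu^n(k)-\nu(\nu+1)\le\omega^2(n+\nu+1)^2$, then apply Cauchy--Schwarz to $w(t)^2=-\int_t^K(w^2)'$ and exploit parity; the constants come out the same. (The opening paragraph about the energy $F(t)$ is never used afterwards and can be deleted.)

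The proof of \eqref{est6}, however, has a genuine gap, which you yourself flag: the weighted Cauchy--Schwarz step needs $\bigl(\int_{t_1}^K \dc\,V^2\bigr)^{1/2}=\bigl(\int_{t_1}^K\dc^2w^2\bigr)^{1/2}$, and the only available control on $\int\dc^2w^2$ is $\Lambda_\nu^n/(\nu(\nu+1))$, which degenerates as $\nu\to0$; as written, the route produces a bound that blows up like $(\nu(\nu+1))^{-1/2}$ and cannot deliver $\pi^2/(4K)$ uniformly in $\nu\ge0$, so the ``separate verification'' at $\nu=0$ does not rescue the nearby values of $\nu$. The Liouville transformation to $V=\sqrt{\dc}\,w$ is therefore a detour that does not close. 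The paper instead extracts \emph{pointwise} information from the same integrated identity you already have: since $\int_0^t(w')^2\le\frac12(-h)+w(t)w'(t)$ with $h=\nu(\nu+1)-\Lambda_\nu^n(k)\le0$ (see \eqref{est7}), one gets $-2w(t)w'(t)\le -h$ for all $t\in[0,K)$, and integrating this from $t$ to $K$ gives $w(t)^2\le(-h)(K-t)$. Combining with $\dc(t,k)=\ns(K-t,k)\le K/(K-t)$, which follows from the concavity inequality $\sn(x,k)\ge x/K$ in \eqref{ineqsn}, yields $\dc(t,k)\,w(t)^2\le K(-h)\le K\omega^2(n+\nu+1)^2=\frac{\pi^2}{4K}(n+\nu+1)^2$ uniformly for all $\nu\ge0$, including $\nu=0$. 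I recommend you replace your step (iii) (and the case split at $\nu=0$) by this pointwise argument.
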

\begin{proof}
Note that the function $w(t):=W_\nu^n(t,k)$ is even or odd so it is enough to consider $t\in[0,K)$.
Set $q(t)=\nu(\nu+1)k'^2\ssc^2(t,k)=\nu(\nu+1)(\dc^2(t,k)-1)$ and $h=\nu(\nu+1)-\Lambda_\nu^n(k)$. By multiplying \eqref{2:modlame} by $w$ and integrating from $0$ to $t$ with $0\le t<K$, we get
\[
 w(t)w'(t)-\int_0^t w'(\tau)^2\, {\mathrm d}\tau -h\int_0^t w(\tau)^2\, {\mathrm d}\tau-\int_0^t q(\tau)w(\tau)^2\, {\mathrm d}\tau =0.\]
Since $q\ge 0$ and $h\le 0$ by \eqref{2:lowerbound}, this gives
\begin{equation}\label{est7}
 \int_0^t w'(\tau)^2\, {\mathrm d}\tau\le \tfrac12 (-h)+w(t)w'(t) .
\end{equation}
Since $w(t)(K-t)^{-\nu-1}$ is analytic at $t=K$, $w(t)w'(t)\to 0$ as $t\to K$.
Therefore, by Lemma \ref{2:l2},
\[ \int_0^K w'(\tau)^2\, {\mathrm d}\tau \le \tfrac12 (-h)\le \frac{\pi^2}{8K^2}(n+\nu+1)^2 .\]
Now \eqref{est5} follows from
\[ w(t)^2=2\int_K^t w(\tau)w'(\tau)\, {\mathrm d}\tau\le 2 \left(\int_0^K w(\tau)^2\, {\mathrm d}\tau\right)^{1/2} \left(\int_0^K w'(\tau)^2\, {\mathrm d}\tau\right)^{1/2}. \]
To get \eqref{est6} we infer from \eqref{est7} that
\[ -2w(t)w'(t)\le -h. \]
Integrating from $t$ to $K$ gives
\begin{equation}\label{est8}
 w(t)^2\le (-h)(K-t)\quad\text{for $0\le t<K$}.
\end{equation}
Since $\sn(x,k)$ is a concave function of $x\in[0,K]$,
\begin{equation}\label{ineqsn}
\sn(x,k)\ge \frac{x}{K}\quad\text{for $x\in[0,K]$,}
\end{equation}
so
\[ \dc(t,k)w(t)^2=\ns(K-t,k)w(t)^2\le K(K-t)^{-1}w(t)^2 \]
which together with \eqref{est8} yields \eqref{est6}.
\end{proof}

The case $\nu=-\frac12$ (which is not covered by the preceding lemma) is treated in the following lemma.

\begin{lemma}\label{2:l4}
There is a positive constant $C$ independent of $n$ and $t$ such that, for all $n\in\N_0$ and $-K<t<K$,
\begin{equation}\label{2:est9}
 \left\{W_{-\frac12}^n(t,k)\right\}^2\le C
 \end{equation}
and
\begin{equation}\label{2:est10}
 \dc(t,k)\{W_{-\frac12}^n(t,k)\}^2\le C(1+n)^2 .
\end{equation}
\end{lemma}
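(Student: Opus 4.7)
Let $w(t) := W_{-1/2}^n(t,k)$. The plan is to adapt the energy-integral argument of Lemma~\ref{2:l3}; two new obstructions appear when $\nu = -\tfrac12$. First, the coefficient $\nu(\nu+1) = -\tfrac14$ is negative, reversing the sign of the $q$-term in the identity used there. Second, both Frobenius exponents at $t = \pm K$ collapse to $\tfrac12$, so that $w(t)\,w'(t)$ tends to a nonzero finite limit rather than vanishing at the singular endpoints. To absorb the sign issue, I rewrite \eqref{2:modlame} at $\nu = -\tfrac12$ in the form
\[ w'' + (\mu + q_1(t))\,w = 0, \qquad \mu := \Lambda_{-1/2}^n(k) + \tfrac14,\quad q_1(t) := \tfrac14 k'^2\,\ssc^2(t,k), \]
so that the potential is now nonnegative. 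Lemma~\ref{2:l2} at $\nu = -\tfrac12$ then yields $\mu \ge \omega^2(n+\tfrac12)^2 + \tfrac{\omega^2}{4}$, giving uniform positivity.

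The key new device is the modified energy
\[ \Phi(t) := w'(t)^2 + (\mu + q_1(t))\,w(t)^2, \]
whose derivative, computed using $w'' = -(\mu+q_1)w$, equals $\Phi'(t) = q_1'(t)\,w(t)^2$ and is nonnegative on $[0,K)$ (since $q_1$ is even and increasing there). Combining with $\Phi(t) \ge (\mu+q_1(t))\,w(t)^2$ and $q_1(0) = 0$, a Gronwall argument gives
\[ \Phi(t) \;\le\; \Phi(0)\,\frac{\mu + q_1(t)}{\mu}, \qquad t \in [0,K), \]
hence $w(t)^2 \le \Phi(0)/\mu$ on $[0,K)$, and by parity of $w$ on all of $(-K,K)$.

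The estimate \eqref{2:est9} therefore reduces to the bound $\Phi(0) \le C\mu$ uniformly in $n$ and $k$. Because $q_1(0) = 0$, one has $\Phi(0) = \mu\,w(0)^2$ for $n$ even and $\Phi(0) = w'(0)^2$ for $n$ odd, so one must show $w(0)^2 \le C$ or $w'(0)^2 \le C\mu$, respectively. My plan here is to combine the pointwise bound with the normalization $\int_{-K}^K w^2\,dt = 1$ and a Sobolev-type estimate on a fixed compact subinterval of $(-K,K)$ where $q_1$ is bounded, using comparison with the solvable reference problem $v'' + \mu v = 0$. For \eqref{2:est10}, the identity $\dc^2 = 1 + 4q_1$ together with the Frobenius expansion $w(t) = a_n(K-t)^{1/2}(1 + O(K-t))$ at $t = K$ shows $\dc(t,k)\,w(t)^2 \to a_n^2/k'$ as $t \to K$, so it suffices to prove $a_n^2 \le C(1+n)^2$. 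This can be done by extracting $a_n^2$ from the asymptotic $(K-t)\,\Phi(t) \to a_n^2/2$ and combining with a refinement of the Gronwall estimate near the endpoint (e.g.\ by matching the interior WKB amplitude to the Frobenius leading coefficient). The main obstacle is the uniform bound $\Phi(0) \le C\mu$; once it is in hand, both parts of the lemma follow from the Gronwall estimate and the Frobenius analysis at $\pm K$.
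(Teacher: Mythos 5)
Your monotonicity device for \eqref{2:est9} is, up to notation, exactly the paper's: with $p(t)=\mu+q_1(t)$ your quantity $\Phi/p=w^2+w'^2/p$ is the Lyapunov function the paper shows is nonincreasing on $[0,K)$, and your Gronwall inequality $w(t)^2\le\Phi(0)/\mu$ is the same statement that the amplitude values of $w^2$ decrease away from $0$. The problem is that you stop precisely where the lemma's content lies: everything reduces to the uniform bound $\Phi(0)\le C\mu$, i.e.\ $w(0)^2\le C$ for $n$ even (resp.\ $w'(0)^2\le C\mu$ for $n$ odd), and for this you offer only a plan (``normalization plus a Sobolev-type estimate \dots comparison with $v''+\mu v=0$''). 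That bound does not follow from the normalization alone without a quantitative argument; the paper closes it by observing that for large $n$ the eigenfunction has a zero $t_0\in[\frac13K,\frac23K]$ (it has $n$ zeros in $(-K,K)$) and then applying \cite[Lemma~4.3]{BiCohlVolkmerA} on $[0,t_0]$, where the coefficient $q_1$ is bounded. Your sketch is in the right spirit but is not a proof of the one inequality on which \eqref{2:est9} rests.

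For \eqref{2:est10} the gap is more serious, because the route you propose cannot work as stated. Your Gronwall bound gives $\dc(t,k)\,w(t)^2\le\sqrt{1+4q_1(t)}\;\Phi(0)/\mu$, and $q_1(t)=\frac14k'^2\ssc^2(t,k)\to\infty$ as $t\to\pm K$, so the estimate you have in hand blows up at the endpoints; the entire difficulty is the missing ``refinement near the endpoint,'' which you defer to ``matching the interior WKB amplitude to the Frobenius leading coefficient'' without an argument. Moreover, bounding the Frobenius coefficient $a_n$ only controls $\lim_{t\to K}\dc(t,k)w(t)^2$, not $\sup_{t}\dc(t,k)w(t)^2$, so even that step would not finish the proof. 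The paper's key idea, absent from your proposal, is the desingularizing substitution $u(r)=\sn^{-1/2}(r,k)\,w(r-K)$: since $\dc(t,k)\,\sn(t+K,k)=1$, one has $\dc(t,k)w(t)^2=u(t+K)^2$ identically, the transformed equation has exponents $\{0,0\}$ so $u$ is analytic at $r=0,2K$, and a single integration by parts of the self-adjoint equation for $u$ yields $u(r)^2\le(1+2(-h)K^2)\|w\|_\infty^2$, which combined with Lemma~\ref{2:l2} and \eqref{2:est9} gives \eqref{2:est10}. You should either supply that substitution (or an equivalent global mechanism) or find another way to convert an endpoint asymptotic into a uniform bound; as written, \eqref{2:est10} is not established.
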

\begin{proof}
The function $w(t):=W_{-\frac12}^n(t,k)$, $t\in(-K,K)$, satisfies the differential equation
\begin{equation}\label{2:specialmodlame}
\frac{{\mathrm d}^2w}{{\mathrm d}t^2} +p(t) w =0,
\end{equation}
where
\[ p(t)=\Lambda_{-\frac12}^n(k)+\tfrac14+\tfrac14k'^2 \ssc^2(t,k) .\]
By Lemma \ref{2:l2}, $\Lambda_{-\frac12}^n(k)>-\frac14$ so $p(t)>0$ and $p'(t)>0$ for all $t\in[0,K)$.
It follows from \eqref{2:specialmodlame} that
\[ \frac{{\mathrm d}}{{\mathrm d}t}\left[w(t)^2+\frac{1}{p(t)} w'(t)^2\right]=-\frac{p'(t)}{p(t)^2} w'(t)^2\le 0\quad \text{for $t\in[0,K)$}.\]
This implies that the amplitude values of $w(t)^2$ (when $w'(t)=0$) decrease on $[0,K)$.
Since $w(t)\to 0$ as $t\to \pm K$ the maximum value of $w(t)^2$ is one of the amplitude values.
If $n$ is even the maximum value of $w(t)^2$ is $w(0)^2$. If $n$ is sufficiently large,
there is $t_0\in[\frac13K,\frac23K]$ such that $w(t_0)=0$.
Then \eqref{2:est9} follows from
\cite[Lemma 4.3]{BiCohlVolkmerA} applied to the interval $[0,t_0]$.
If $n$ is odd the proof is similar.

To prove \eqref{2:est10}, we set
\[ u(r):=\sn^{-1/2}(r,k)\, w(r-K)\quad\text{for $0<r<2K$}.\]
Then $u$ satisfies the differential equation
\begin{equation}\label{speciallame}
\frac{{\mathrm d}}{{\mathrm d}r} \left(\sn(r,k) \frac{\mathrm{d}u}{{\mathrm d}r}\right)+q(r)\sn(r,k) u=0 ,
\end{equation}
where
\[ q(r)=-h+\tfrac14k'^2-\tfrac34\dn^2(r,k)\le -h,\quad -h=\tfrac14+\Lambda_{-\frac12}^n(k)>0. \]
This differential equation has regular singular points at $r=0, 2K$ with exponents $0,0$.
This shows that $u(r)$ is analytic at the points $r=0, 2K$.
We multiply \eqref{speciallame} by $u(r)$ and integrate from $0$ to $r$ to find
\[ -\sn(r,k)u(r)u'(r)+\int_0^r \sn(\sigma,k)u'(\sigma)^2{\mathrm d}\sigma= \int_0^r q(\sigma) w(\sigma-K)^2{\mathrm d}\sigma .\]
Let $\|w\|_\infty$ be the maximum norm of $w$ on the interval $[-K,K]$.
Then we obtain
\[ -\sn(r,k)u(r)u'(r)\le (-h)r\|w\|_\infty^2\quad\text{for $0\le r\le 2K$}. \]
Using \eqref{ineqsn} we find
\[  -u(r)u'(r)\le (-h) K\|w\|_\infty^2\quad\text{for $0\le r\le K$}. \]
Upon integrating the last inequality from $r$ to $K$ we get
\begin{equation}\label{2:est3}
 u(r)^2-u(K)^2\le 2(-h) K^2 \|w\|_\infty^2 \quad\text{for $0\le r\le K$}.
\end{equation}
Since $u(K)=w(0)$ we obtain
\[ u(r)^2\le (1+2(-h)K^2)\|w\|_\infty.\]
Now Lemma \ref{2:l2} and \eqref{2:est9} give \eqref{2:est10}.
\end{proof}

We will need the following lemma.
\begin{lemma}\label{4:l4}
Let $u:[0,b]\to \R$ be a solution of the differential equation
\[ u''(t)=q(t)u(t),\quad \text{$t\in[0,b]$}, \]
determined by the initial conditions $u(0)=1, u'(0)=0$ or $u(0)=0, u'(0)=~1$, where $q:[0,b]\to\R$ is a continuous function.
Suppose that $q(t)\ge0$ on $[0,b]$ and $q(t)\ge \lambda^2$ on $[c,b]$ for some $\lambda>0$ and $c\in[0,b)$.
Then $u(t)>0$ and $u'(t)\ge 0$ for $t\in(0,b]$, and
\[ \frac{u(t)}{u(b)}\le 2\expe^{-\lambda(b-c)}\quad\text{for all $t\in[0,c]$}.\]
\end{lemma}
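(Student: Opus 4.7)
The plan is in two stages: first establish that $u$ is positive and nondecreasing on $(0,b]$ so the bound reduces to estimating $u(c)/u(b)$; then compare $u$ on $[c,b]$ with the solution of the constant-coefficient equation $y''=\lambda^2 y$ having the same initial data at $t=c$, and read off exponential growth from the explicit $\cosh,\sinh$ formula.

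\emph{Monotonicity.} In either initial data case one has $u(0)\ge 0$, $u'(0)\ge 0$, and not both zero, so $u>0$ and $u'\ge 0$ on a small interval $(0,\delta]$. On any subinterval of $[0,b]$ on which $u\ge 0$, the equation together with $q\ge 0$ gives $u''=qu\ge 0$, so $u'$ is nondecreasing; combined with $u'(0)\ge 0$ this forces $u'\ge 0$, hence $u$ is nondecreasing and therefore positive for $t>0$. A standard continuation argument extends this to all of $[0,b]$, giving $u(t)>0$, $u'(t)\ge 0$ on $(0,b]$. In particular $u(c)>0$, and since $u$ is nondecreasing, $u(t)\le u(c)$ for $t\in[0,c]$, so it suffices to prove $u(c)/u(b)\le 2\expe^{-\lambda(b-c)}$.

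\emph{Comparison on $[c,b]$.} I introduce the model solution
\[
y(t):=u(c)\cosh(\lambda(t-c))+\lambda^{-1}u'(c)\sinh(\lambda(t-c)),
\]
which satisfies $y''=\lambda^2 y$ with $y(c)=u(c)$, $y'(c)=u'(c)$. Setting $g:=u-y$, I have $g(c)=g'(c)=0$ and
\[
g''=qu-\lambda^2 y=\lambda^2 g+(q-\lambda^2)u\ge \lambda^2 g \quad\text{on }[c,b],
\]
since $q\ge\lambda^2$ and $u\ge 0$ there. To deduce $g\ge 0$, I would look at the auxiliary function $W(t):=g'(t)\cosh(\lambda(t-c))-\lambda g(t)\sinh(\lambda(t-c))$; a direct differentiation gives $W'=(g''-\lambda^2 g)\cosh(\lambda(t-c))\ge 0$ with $W(c)=0$, so $W\ge 0$ on $[c,b]$, which is exactly $(g/\cosh(\lambda(t-c)))'\ge 0$. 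Together with $g(c)=0$ this yields $g\ge 0$, i.e.\ $u\ge y$ on $[c,b]$.

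\emph{Conclusion.} Evaluating at $t=b$ and using $u'(c)\ge 0$ one gets $u(b)\ge y(b)\ge u(c)\cosh(\lambda(b-c))$. Since $1/\cosh x=2/(\expe^{x}+\expe^{-x})\le 2\expe^{-x}$, this gives
\[
\frac{u(t)}{u(b)}\le\frac{u(c)}{u(b)}\le\frac{1}{\cosh(\lambda(b-c))}\le 2\expe^{-\lambda(b-c)}
\]
for all $t\in[0,c]$, as required. The only delicate step is the comparison $u\ge y$; I expect the Wronskian-style identity above to be the cleanest route, since it handles the differential inequality $g''\ge \lambda^2 g$ with vanishing data at $t=c$ without invoking a Sturm comparison theorem whose hypotheses on linear independence would need extra checking.
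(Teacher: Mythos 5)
Your proof is correct, and the paper itself gives no argument for this lemma beyond citing the companion paper [BiCohlVolkmerA, Lemma 4.5], so there is nothing in this source to diverge from; your route (monotonicity from $q\ge 0$, then comparison with $y''=\lambda^2 y$ on $[c,b]$ via the Wronskian-type function $W=g'\cosh(\lambda(t-c))-\lambda g\sinh(\lambda(t-c))$ to get $u(b)\ge u(c)\cosh(\lambda(b-c))$, followed by $1/\cosh x\le 2\expe^{-x}$) is evidently the intended one, since the factor $2$ in the statement is exactly the artifact of that last inequality. The only point worth making explicit is the degenerate case $c=0$ with $u(0)=0$, where the claimed bound holds trivially because the left-hand side is $0$.
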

\begin{proof}
For the proof see the proof of \cite[Lemma 4.5]{BiCohlVolkmerA}.
\end{proof}

So far we considered the Lam\'e-Wangerin function $W_\nu^n(t,k)$ for $t\in(-K,K)$. In the following
we will also need this function for purely imaginary $t$. More generally, by analytic continuation, we define
$W_\nu^n(t,k)$ on the strip $|\Re t|<K$.

\begin{lemma}\label{2:l5}
Let $0<c<b<2K'$. Then there is a constant $p\in(0,1)$ independent of $\nu, n$ and $s$ such that
\[ 0\le \frac{W_\nu^n(is,k)}{W_\nu^n(ib,k)}\le 2 p^{n+\nu+1}\quad\text{for  $\nu\ge -\tfrac12$, $n\in \N_0$, $s\in[0,c]$.}\]
\end{lemma}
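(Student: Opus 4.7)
The plan is to apply Lemma~\ref{4:l4} to a real-analytic avatar of $W_\nu^n(is,k)$. Since $W_\nu^n(t,k)$ is even (resp.\ odd) in $t$ for even (resp.\ odd) $n$, the rescaling $\hat W(s):=c_n W_\nu^n(is,k)$ with $c_n=1$ for even $n$ and $c_n=-i$ for odd $n$ is real-analytic, and the factors $c_n$ cancel in the ratio $W_\nu^n(is,k)/W_\nu^n(ib,k)$. By Jacobi's imaginary transformation, $\dc(is,k)=\dn(s,k')$, so substituting $t=is$ in the modified Lam\'e equation~\eqref{2:modlame} yields $\hat W''=q\,\hat W$ with
\[
q(s):=\Lambda_\nu^n(k)-\nu(\nu+1)\dn^2(s,k').
\]
A sign normalization gives $\hat W(0)\ge 0$ and $\hat W'(0)\ge 0$, with exactly one of them positive according to the parity of $n$, matching the two allowed initial conditions in Lemma~\ref{4:l4}.

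The key step is a lower bound of the form $q(s)\ge\delta^2(n+\nu+1)^2$ on $[c,b]$ (with $q\ge 0$ on $[0,b]$) where $\delta>0$ is independent of $\nu$ and $n$. Rewriting
\[
q(s)=\bigl(\Lambda_\nu^n(k)-\nu(\nu+1)\bigr)+\nu(\nu+1)k'^2\sn^2(s,k'),
\]
the case $\nu\ge 0$ is handled by the sharpened bound~\eqref{2:lowerbound}, which forces the first summand to be at least $\omega^2(n+1)^2$; the second summand is nonnegative, and on $[c,b]$ it is bounded below by $\nu^2 k'^2\alpha$ with $\alpha:=\min_{s\in[c,b]}\sn^2(s,k')>0$. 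The elementary inequality $(n+1)^2+\nu^2\ge\tfrac12(n+\nu+1)^2$ then produces $q(s)\ge\tfrac12\min(\omega^2,k'^2\alpha)(n+\nu+1)^2$ on $[c,b]$. For $-\tfrac12\le\nu<0$ the coefficient $\nu(\nu+1)$ is negative, so instead I use $\dn^2\ge k^2$ together with Lemma~\ref{2:l2} to obtain
\[
q(s)\ge |\nu(\nu+1)|(\omega^2-k'^2)+\omega^2(n+\nu+1)^2\ge\omega^2(n+\nu+1)^2
\]
on all of $(0,2K')$, where $\omega^2>k'^2$ follows from $\omega=M(1,k')\ge\sqrt{k'}$ as used in the proof of Lemma~\ref{2:l1}. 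Choosing $\delta$ as the smaller of the two rates so obtained, Lemma~\ref{4:l4} applied with $\lambda=\delta(n+\nu+1)$ gives
\[
0\le\frac{\hat W(s)}{\hat W(b)}\le 2\expe^{-\delta(b-c)(n+\nu+1)}=2\,p^{n+\nu+1},\quad s\in[0,c],
\]
with $p:=\expe^{-\delta(b-c)}\in(0,1)$, which is the desired bound.

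The main obstacle is uniformity of $\delta$ in $\nu\ge -\tfrac12$: naively pairing the lower bound $\Lambda_\nu^n(k)\ge\omega^2(n+\nu+1)^2$ of Lemma~\ref{2:l2} with the crude $\dn^2\le 1$ yields only $q\ge\omega^2(n+\nu+1)^2-\nu(\nu+1)$, which becomes strongly negative for large $\nu$ whenever $\omega<1$. The resolution is the case split above, which replaces the crude bound by the sharpened~\eqref{2:lowerbound} when $\nu\ge 0$ and leverages the AGM inequality $\omega^2>k'^2$ when $\nu<0$.
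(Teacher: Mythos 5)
Your proposal is correct and follows essentially the same route as the paper: pass to the real solution $E(s)$ of $E''=qE$ with $q(s)=\Lambda_\nu^n(k)-\nu(\nu+1)+\nu(\nu+1)k'^2\sn^2(s,k')$, bound $q$ from below using \eqref{2:lowerbound} for $\nu\ge 0$ and Lemma \ref{2:l2} together with $\omega>k'$ for $-\tfrac12\le\nu<0$, and then invoke Lemma \ref{4:l4}. You in fact spell out the uniform rate $q\ge\delta^2(n+\nu+1)^2$ on $[c,b]$ more explicitly than the paper does.
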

\begin{proof}
Let $E(s)$ be the solution of the differential equation
\[
\frac{{\mathrm d}^2E}{{\mathrm d}s^2}=q(s)E,\quad  q(s)=\Lambda_\nu^n(k)-\nu(\nu+1)+\nu(\nu+1)k'^2\sn^2(s,k'),
\]
satisfying the initial conditions
$E(0)=1, E'(0)=0$ if $n$ is even and $E(0)=0, E'(0)=1$ if $n$ is odd. Then $E(s)$ is a constant multiple of
$W_\nu^n(is,k)$.
If $\nu\ge 0$ then \eqref{2:lowerbound} gives
\[ q(s)\ge \omega^2(n+1)^2+\nu(\nu+1)k'^2\sn^2(s,k')> 0. \]
If $-\frac12\le\nu<0$ then Lemma \ref{2:l2} and $\omega>k'$ imply
\[ q(s)\ge \omega^2(n+\nu+1)^2+\nu(\nu+1)k'^2\sn^2(s,k')
\ge \tfrac14\omega^2-\tfrac14k'^2>0. \]
In both cases we apply Lemma \ref{4:l4} to obtain the desired result.
\end{proof}

\section{Peanut Harmonics in Flat-Ring Coordinates}\label{sec7}

In a previous paper \cite{BiCohlVolkmerA},
we studied the internal and external harmonics in flat-ring coordinates associated with coordinate surfaces which are flat-rings. In the present work we now investigate the harmonics associated with the coordinate surfaces which are orthogonal to the flat-rings, namely what we refer to as a peanut cyclidic surface.

\subsection{Flat-Ring Coordinates}

Flat-ring cyclide coordinates is an orthogonal curvilinear coordinate system in $\R^3$. These coordinates
are  connected to Cartesian coordinates $\r=(x,y,z)$ by the transformation
\begin{equation}\label{3:flatring}
x=R\cos\phi,\quad y=R\sin\phi,\quad z=-ik R\,\sn(s,k)\sn(it,k),
\end{equation}
where $s\in(0,2K)$, $t\in(-K',K')$, $\phi\in[-\pi,\pi)$, and
\begin{equation}\label{3:R}
 \frac1R=\frac{1}{k'}\,\dn(s,k)\dn(it,k)+\frac{k}{k'}\,\cn(s,k)\cn(it,k).
\end{equation}
If one uses Jacobi's imaginary transformation \cite[Table 22.6.1]{NIST:DLMF}, namely
\begin{equation}
\sn(iz,k)=i\ssc(z,k'),\quad
\cn(iz,k)=\nc(z,k'),\quad
\dn(iz,k)=\dc(z,k'),
\end{equation}
then we can rewrite $R$, $z$ as follows
\begin{equation}
R=\frac{k'\cn(t,k')}{k\cn(s,k)+\dn(s,k)\dn(t,k')}
\label{Rzlab},
\quad
z=\frac{kk'\sn(s,k)\sn(t,k')}{k\cn(s,k)+\dn(s,k)\dn(t,k')}.
\end{equation}

Since the Jacobian elliptic functions $\cn(s,k)$, $\sn(s,k)$, $\dn(s,k)$ depend on the modulus $k\in(0,1)$, this is actually a family of coordinate systems depending on the parameter $k\in(0,1)$.
If we set $\phi=0$, we obtain a coordinate system $s,t$ in the half-plane $x>0$, $z\in\R$.
The rectangle $s\in(0,2K)$, $t\in(-K',K')$ is mapped bijectively
onto the set
\begin{equation}\label{3:Q2}
 Q_2=\{(x,z): x>0\}\setminus
 \{(x,0)
 :x\in[0,b]\cup[b^{-1},\infty)\},
\end{equation}
where
\begin{equation}\label{3:b}
 b=\frac{1-k}{k'}=\sqrt{\frac{1-k}{1+k}}\in(0,1),
\end{equation}
for $k\in(0,1)$.
The region $Q_2$ is the right-hand half-plane with cuts along the $x$-axis from $x=0$ to $x=b$ and from $b^{-1}$ to $\infty$.
This was shown in \cite[Section 2]{BiCohlVolkmerA}.
Some coordinate lines of this coordinate system are depicted in Figure~\ref{2:fig1}.
\begin{figure}[ht]
\begin{center}
\includegraphics[clip=true,trim={0 0 0 0},width=8cm]{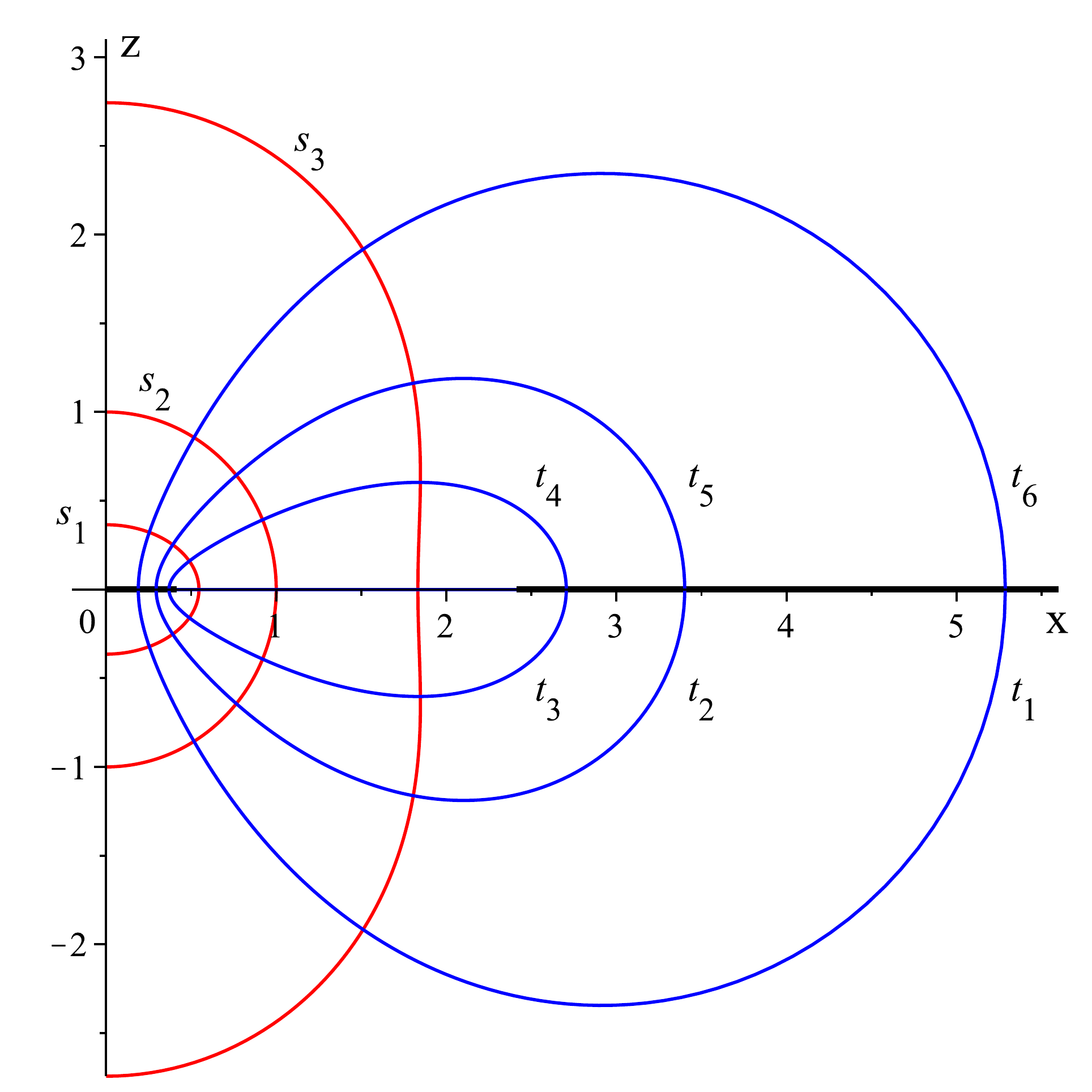}
\caption{Coordinate lines $s=s_i$, $s_1=0.5K$, $s_2=K$, $s_3=1.5K$ and $t=t_i$, $t_1=-0.7K'$, $t_2=-0.5K'$, $t_3=-0.3K'$, $t_4=0.3K'$, $t_5=0.5K'$, $t_6=0.7K'$ of planar flat-ring coordinates for $k=2^{-1/2}$.\label{2:fig1}
}
\end{center}
\end{figure}

Let $\r\ne \0$ be a point in $\R^3$. The inversion $\sigma(\r)$ of $\r$ at the unit sphere $x^2+y^2+z^2=1$ is given by
\[ \sigma(\r)=\|\r\|^{-2}\r.\]
Let $s,t,\phi$ be flat-ring coordinates of $\r$ and set $\tilde s=2K-s$.
Let $\tilde R$ be given by \eqref{3:R} with $s$ replaced by $\tilde s$.
Since $\sn(2K-s,k)=\sn(s,k)$, $\cn(2K-s,k)=-\cn(s,k)$, $\dn(2K-s,k)=\dn(s,k)$, a computation shows that
$R=\tilde R\, \|\r\|^2$
and this implies that the point $\tilde \r$ with flat-ring coordinates $\tilde s, t, \phi$ agrees with $\sigma(\r)$.
Therefore, the inversion at the unit sphere is given in flat-ring coordinates by $s\mapsto 2K-s$.

For a fixed value $s_0\in(0,2K)$
the coordinate surface $s=s_0$ describes a closed surface (adding two intersection points with the $z$-axis where the coordinate system is not valid). We call this closed surface a peanut (see Figure~\ref{3:peanut3d}).
The surface $s=2K-s_0$ is the inversion of the surface $s=s_0$. Note that the surface $s=K$ is the unit sphere.
Let $D_2$ denote the interior of the peanut surface $s=s_0$ which is given by $s<s_0$ (adding parts of the $z$-axis and the
 disk $R\le b$ in the plane $z=0$).

\begin{figure}[ht]
\centering
    \includegraphics[clip=true,trim={0 15cm 2cm 4cm},width=15cm]{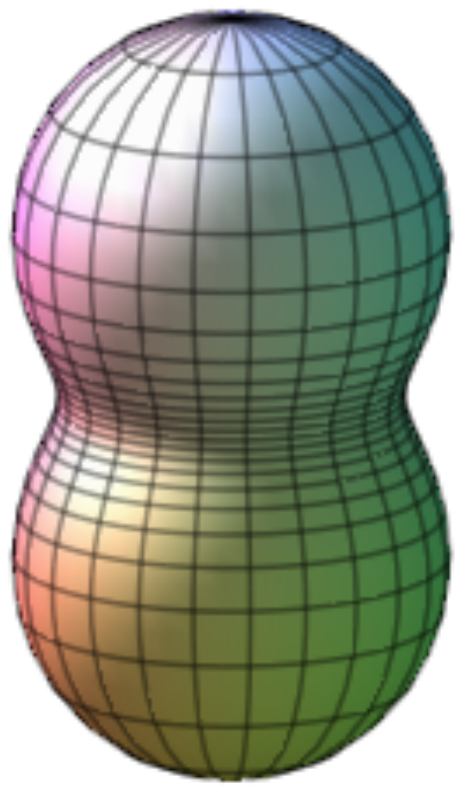}
    \caption{Peanut, $k=0.5$, $s=1.7 K$. \label{3:peanut3d}}
\end{figure}

The surface $s=s_0$ as well as the domain $D_2$ can be expressed in cartesian coordinates as follows.
If $0<s_0<2K$, $s_0\ne K$,  we define the function
\begin{equation}\label{3:Omega}
 \Omega(\r):=\frac{k^2(\|\r\|^2+1)^2}{\dn^2(s_0,k)} - \frac{(\|\r\|^2-1)^2}{\cn^2(s_0,k)}+ \frac{4z^2}{\sn^2(s_0,k)}.
\end{equation}
If $0<s_0<K$ then the coordinate surface $s=s_0$ is given by
the part of the surface $\Omega(\r)=0$ \cite[(2.11)]{BiCohlVolkmerA}
which lies inside the unit sphere.
If $K<s_0<2K$ then the coordinate surface $s=s_0$ is given by the part of the surface $\Omega(\r)=0$ which lies outside of
the unit sphere.
All coordinate surfaces $s=s_0$ intersect the plane $z=0$ in the annulus $b<R<b^{-1}$, where $R=\sqrt{x^2+y^2}$.
Between the surfaces $s=s_0$ and $s=2K-s_0$ we have $\Omega(\r)>0$ and interior to the smaller surface and exterior to the larger surface we have $\Omega(\r)<0$. We can check this by considering $\r=\0$ and $\|\r\|=1$.

We distinguish three cases:
\begin{itemize}
\item
If $s_0=K$ then the surface $s=s_0$ is the unit sphere and $D_2$ is the open unit ball
\[ B=\{ \r: \|\r\|<1\}.\]
Notice that $\Omega(\r)$ is not well-defined in this case since \cite[Table 22.5.1]{NIST:DLMF}
$\cn(K,k)=0$.
\item
If $0<s_0<K$ then the surface $s=s_0$ lies in $B$ and
\[ D_2=B \cap\{\r:\Omega(\r)<0\}.\]
\item
If $K<s_0<2K$ then the surface $s=s_0$ is exterior to the unit sphere and
\[ D_2=B \cup\{\r:\Omega(\r)>0\}.\]
\end{itemize}

\subsection{Internal Peanut Harmonics}

Let $m\in\Z$, $\nu=|m|-\frac12$, $k\in(0,1)$, $h\in\C$ and $\lambda=\nu(\nu+1)-h$.
Let $u_1(s)$, $0<s<2K$, be a solution of the Lam\'e equation \eqref{2:lame},
and let $u_2(t)$, $-K'<t<K'$, be a solution of
the modified Lam\'e equation \eqref{2:modlame0}.
Let $\r=(x,y,z)$, $R=\sqrt{x^2+y^2}$ \eqref{Rzlab}.
Then, by \cite[Theorem 3.1]{BiCohlVolkmerA},
the function
\begin{equation}\label{3:sepsol}
 u(\r)=\frac{1}{\sqrt{R}}
u_1(s)u_2(t)\expe^{im\phi},
\end{equation}
is harmonic in $\R^3$ except for the $z$-axis and the set $\{(x,y,0): R\le b \text{ or } R \ge b^{-1}\}$,
where 
\begin{equation}
b:=b(k)=\sqrt{\frac{1-k}{1+k}},
\label{bdef}
\end{equation}
is defined in \eqref{3:b}.
Such a function will be called an internal peanut harmonic if it is harmonic on each domain $D_2$ considered in
\S3.1.
Therefore, an internal peanut harmonic is harmonic on the union of all domains $D_2$, that is, on all of $\R^3$ except for the set
$\{(x,y,0): R\ge b^{-1}\}$.
In this case $u(\r)$ has to stay bounded when we approach the positive and negative $z$-axis, so the function
$|t\mp K'|^{-1/2}u_2(t)$ has to stay bounded as $t\to\pm K'$.
Therefore, we arrive at the eigenvalue problem treated in Section \ref{sec2}. Correspondingly, we take $\lambda=\Lambda^n_{|m|-\frac12}(k')$ and $u_2(t)=W_{{|m|-\frac{1}{2}}}^n(t,k')$
Then we require that the function $u_1(s)u_2(t)$ is analytic in the right-hand half plane $x>0, z\in \R$ except the segment between $b^{-1}$ and $+\infty$ on the $x$-axis.
As in \cite[Section 5.1]{BiCohlVolkmerA} we see that this implies that $u_1$ and $u_2$ are both even or both odd functions.
We take $u_1(s)=u_2(is)$.
Thus we define internal peanut harmonics by
\begin{equation}\label{3:intpeanut}
G^n_m(\r)=
\frac{1}{\sqrt{R}}
W^n_{|m|-\frac12}(is,k')W^n_{|m|-\frac12}(t,k')\expe^{im\phi}\quad\text{for $m\in\Z$, $n\in\N_0$}.
\end{equation}

We collect some properties of internal peanut harmonics in the following theorem.
\begin{thm}\label{3:internal}
The internal peanut harmonics $G_m^n$ are harmonic on $\R^3$ except for the set $\{(x,y,0): R\ge b^{-1}\}$.
Moreover, we have
\begin{equation}\label{3:Greflection}
 G^n_m(x,y,-z)=(-1)^{n} G^n_m(\r).
\end{equation}
\end{thm}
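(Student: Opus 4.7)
The first claim is essentially a distillation of the construction carried out immediately before the statement, so my plan is to check each potential singular locus in turn. I would begin by invoking \cite[Theorem~3.1]{BiCohlVolkmerA}, which guarantees that any product $R^{-1/2} u_1(s) u_2(t)\expe^{im\phi}$ with $u_1$ solving the Lam\'e equation \eqref{2:lame} and $u_2$ solving the modified Lam\'e equation \eqref{2:modlame0} with matching separation constant $\lambda=\nu(\nu+1)-h$, $\nu=|m|-\tfrac12$, is harmonic on $\R^3$ away from the $z$-axis and the two annular cuts $\{(x,y,0): R\le b\}\cup\{(x,y,0): R\ge b^{-1}\}$. A short calculation using Jacobi's imaginary transformation shows that the coefficient $\dc^2(is,k')$ becomes $\dn^2(s,k)$, so $u_1(s):=W^n_{|m|-1/2}(is,k')$ indeed satisfies \eqref{2:lame} with the correct $h$, and the candidate $G_m^n$ falls under the umbrella of the cited theorem. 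The remaining task is to remove the $z$-axis singularity and the inner-cut singularity.

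Removability on the $z$-axis is exactly the purpose for which the Lam\'e-Wangerin boundary conditions \eqref{2:bc1}--\eqref{2:bc2} were formulated: they force $|t\mp K'|^{-1/2}W^n_{|m|-1/2}(t,k')$ to stay bounded as $t\to\pm K'$, which is precisely the decay required to absorb the $R^{-1/2}$ blow-up as $\r$ approaches the axis (inspecting \eqref{Rzlab}, $R\to 0$ proportionally to $\cn(t,k')\to 0$). For the inner disk $\{(x,y,0): R\le b\}$, approaching from either side in the meridian slice $Q_2$ corresponds to the two preimages $(s,t)=(0^+,\pm t_0)$ of the same cartesian point, so analyticity across the segment amounts to the two factors $u_1$ and $u_2$ sharing a common parity; this removability was worked out in detail in \cite[Section~5.1]{BiCohlVolkmerA}. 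Under our choice $u_1(s)=W^n_{|m|-1/2}(is,k')$ and $u_2(t)=W^n_{|m|-1/2}(t,k')$, both factors inherit the parity of $n$, so the requirement is automatically satisfied and the singularity is removable.

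For the reflection identity \eqref{3:Greflection}, I would argue purely via coordinate symmetry. Inspecting \eqref{Rzlab}, the map $(x,y,z)\mapsto(x,y,-z)$ corresponds in flat-ring coordinates to $(s,t,\phi)\mapsto(s,-t,\phi)$, since $\cn(t,k')$ and $\dn(t,k')$ are even in $t$ while $\sn(t,k')$ is odd, leaving $R$ and $\phi$ invariant and flipping only $z$. Substituting $t\mapsto-t$ into \eqref{3:intpeanut} and invoking the parity of $W^n_{|m|-1/2}(\cdot,k')$ recorded in \S\ref{sec2} (even for even $n$, odd for odd $n$) immediately produces the factor $(-1)^n$. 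I expect the only genuinely delicate ingredient in the whole argument to be the inner-disk removability, where the two-sheeted geometric picture of the flat-ring parametrization around the cut has to be read off from the earlier paper; the remaining pieces are either built into the Lam\'e-Wangerin eigenvalue problem of Section~\ref{sec2} or follow directly from the parities of the Jacobian elliptic functions.
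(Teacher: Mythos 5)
Your argument follows the paper's proof essentially step for step: harmonicity off the cuts via \cite[Theorem 3.1]{BiCohlVolkmerA}, the parity condition from \cite[Section 5.1]{BiCohlVolkmerA} to continue across the inner cut, boundedness of $R^{-1/2}W^n_{|m|-\frac12}(t,k')$ near $t=\pm K'$ to remove the $z$-axis, and the substitution $t\mapsto -t$ together with the parity of $W^n_{|m|-\frac12}$ for the reflection identity \eqref{3:Greflection}. The only detail you leave implicit is that the bounding circle $R=b$ in the plane $z=0$ (the image of the corner $(s,t)=(0,0)$ of the parameter rectangle, where the parity argument for the open cut does not directly apply) is itself a removable singularity; the paper disposes of it by boundedness and Kellogg's removable-singularity theorem, the same device you invoke for the $z$-axis.
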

\begin{proof}
From our discussion at the beginning of this subsection we know that $G^n_m$ is a harmonic function on $\R^3$ except in the
annulus $\{(x,y,0): x^2+y^2\ge b^{-2}\}$, the $z$-axis, and the circle centered at the origin with radius $b$ in the $xy$-plane $G^n_m$ is bounded in a neighborhood of the circle. Therefore, the circle is a removable singularity of $G^n_m$ \cite[Theorem XIII, page 271]{Kellogg}. Since $G^n_m$ stays bounded when we approach the $z$-axis, the $z$-axis is also a removable singularity.
Hence, $G^n_m$ is harmonic on the desired domain.
The reflection $z\mapsto -z$ is expressed by $t\mapsto-t$ which implies \eqref{3:Greflection}{.}
\end{proof}

\subsection{The Dirichlet Problem {for Internal Peanut Harmonics}}

Theorem \ref{2:t2} implies the following theorem.

\begin{thm}\label{3:t1}
The system of functions
\[
J_{m,n}^k(t,\phi):=\frac{1}{\sqrt{2\pi}} W^{\,n}_{|m|-\frac12}(t,k')\expe^{im\phi},\quad m\in\Z, n\in\N_0 \]
is an orthonormal basis in the Hilbert space
\[ H_2=L^2((-K',K')\times (-\pi,\pi)). \]
\end{thm}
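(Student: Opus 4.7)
The proof is a direct tensor-product argument. Since $H_2$ is the completed tensor product $L^2(-K',K')\,\widehat{\otimes}\,L^2(-\pi,\pi)$ and $J_{m,n}^k$ factors as a product of a function of $t$ and a function of $\phi$, the plan is to reduce the statement to the completeness of two one-variable orthonormal systems: the Fourier system on $(-\pi,\pi)$ and the Lamé--Wangerin system on $(-K',K')$ from Theorem \ref{2:t2}.

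First I would verify orthonormality. By Fubini,
\[
 \langle J_{m,n}^k, J_{m',n'}^k\rangle_{H_2}
 = \Bigl(\frac{1}{2\pi}\int_{-\pi}^{\pi} \expe^{i(m-m')\phi}\,{\mathrm d}\phi\Bigr)\int_{-K'}^{K'} W_{|m|-\frac12}^n(t,k')\,W_{|m'|-\frac12}^{n'}(t,k')\,{\mathrm d}t.
\]
The first factor equals $\delta_{m,m'}$, and when $m=m'$ we have $|m|=|m'|$, so the second factor is the standard inner product of two Lamé--Wangerin eigenfunctions of the same family. By \eqref{LWnorm} and the orthogonality part of Theorem \ref{2:t2} (applied with parameter $\nu=|m|-\frac12$ and modulus $k'$ in place of $k$, so that $K$ is replaced by $K'$), this yields $\delta_{n,n'}$.

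Next I would prove completeness and Parseval's identity simultaneously. Given $f\in H_2$, Plancherel's theorem applied to the Fourier basis $\{(2\pi)^{-1/2}\expe^{im\phi}\}_{m\in\Z}$ of $L^2(-\pi,\pi)$ gives
\[
 f(t,\phi)=\sum_{m\in\Z} c_m(t)\,\frac{\expe^{im\phi}}{\sqrt{2\pi}},\qquad c_m(t):=\frac{1}{\sqrt{2\pi}}\int_{-\pi}^{\pi} f(t,\phi)\expe^{-im\phi}\,{\mathrm d}\phi,
\]
with $\|f\|_{H_2}^2=\sum_{m\in\Z}\|c_m\|_{L^2(-K',K')}^2$, so each $c_m$ lies in $L^2(-K',K')$. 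For each fixed $m$, Theorem \ref{2:t2} (again with $\nu=|m|-\frac12\ge-\frac12$ and modulus $k'$) expands
\[
 c_m(t)=\sum_{n=0}^{\infty} d_{m,n}\,W_{|m|-\frac12}^n(t,k'),\qquad \|c_m\|_{L^2(-K',K')}^2=\sum_{n=0}^\infty |d_{m,n}|^2.
\]
Substituting yields $f=\sum_{m,n}d_{m,n}J_{m,n}^k$ in $H_2$ with $\|f\|_{H_2}^2=\sum_{m,n}|d_{m,n}|^2$, which proves both that the $J_{m,n}^k$ have dense span and that Parseval holds.

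No genuine obstacle arises here; the only point that deserves a sentence of comment is the admissibility of the parameter range. As $m$ varies over $\Z$, the index $\nu=|m|-\frac12$ takes all values in $\{-\tfrac12,\tfrac12,\tfrac32,\dots\}$, and Section~\ref{sec2} (with Lemma \ref{2:l4} covering the endpoint $\nu=-\tfrac12$) ensures that Theorem~\ref{2:t2} is available throughout this range, with $k'\in(0,1)$ a legitimate modulus in place of $k$.
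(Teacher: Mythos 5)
Your proposal is correct and follows the same route the paper intends: the paper's entire proof is the one-line remark that Theorem \ref{2:t2} implies the result, and your tensor-product argument (Fourier expansion in $\phi$, then the Lam\'e--Wangerin expansion of each coefficient $c_m$ with $\nu=|m|-\frac12$ and modulus $k'$) is precisely the standard completeness argument being invoked. Your added checks---that orthogonality for $m\ne m'$ comes from the $\phi$-integral alone, and that the parameter range $\nu\in\{-\tfrac12,\tfrac12,\tfrac32,\dots\}$ is admissible---are the right details to verify.
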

We now solve the Dirichlet problem for the peanut region $D_2$ given by  $s<s_0$.
We say that a harmonic function $u$ defined in $D_2$ attains
the boundary values $f$ on $\partial D_2$
in the weak sense if
$\sqrt{R}u$
(expressed in terms of flat-ring coordinates $s,t,\phi$) evaluated at $s_1\in(0,s_0)$ converges to 
$\sqrt{R}f$
in the Hilbert space $H_2$ as $s_1\to s_0$. Notice that the peanut region $D_2$ (in contrast to the flat-ring region $D_1$) meets the $z$-axis so that the factor
$\sqrt{R}$
cannot be omitted in this definition. As in \cite[Section 5.2]{BiCohlVolkmerA},
the solution of the Dirichlet problem is unique.

\begin{thm}\label{3:Dirichlet}
Let $f$ be a function defined on the boundary $\partial D_2$ of the region $D_2$ for some $s_0\in(0,2K)$.
Suppose that $f$ is represented in flat-ring coordinates as
\[ \sqrt{R}f(\r)=g(t,\phi) ,\quad  t\in(-K',K'),\quad \phi\in(-\pi,\pi)\]
such that $g\in H_2$.
For all $m\in\Z$ and $n\in\N_0$. Define
\begin{eqnarray*}
&& \hspace{-3.0cm}c_m^{n}:=\frac{1}{2\pi W^{n}_{|m|-\frac12}(is_0,k')}\int_{-\pi}^\pi
 \expe^{-im\phi}
 \int_{-K'}^{K'} g(t,\phi)W^{n}_{|m|-\frac12}(t,k')\,{\mathrm d}t\, {\mathrm d}\phi\\
 &&\hspace{-2.33cm}=  \frac{1}{2\pi\Bigl\{W^{n}_{|m|-\frac12}(is_0,k')\Bigr\}^2}\int_{\partial D_2} \frac{1}{h(\r)} f(\r) G_{-m}^{n}(\r){\mathrm d}S(\r),
\end{eqnarray*}
where $h(\r)= kR\left(\sn^2(s,k)-\sn^2(it,k)\right)^{1/2}$.
Then the function
\begin{equation}\label{3:sol}
 u(\r)=\sum_{m\in\Z}\sum_{n=0}^\infty c_m^{n}  G_m^{n}(\r)
\end{equation}
is harmonic in $D_2$ and it attains the boundary values $f$ on $\partial D_2$ in the weak sense.
The infinite series in \eqref{3:sol} converges absolutely and uniformly in compact subsets of $D_2$.
\end{thm}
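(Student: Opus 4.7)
The plan is to represent $u$ as the Poisson-like interior extension of the Fourier expansion of the boundary data with respect to the orthonormal basis from Theorem \ref{3:t1}, with convergence controlled by the damping factor from Lemma \ref{2:l5}. Four items need verification: the equivalence of the two expressions for $c_m^n$, absolute and uniform convergence of the series on compact subsets of $D_2$, harmonicity of the limit $u$, and weak attainment of the boundary data.

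Set $\nu_m:=|m|-\tfrac12$ and $d_m^n:=c_m^n\,W_{\nu_m}^n(is_0,k')$. The first formula for $c_m^n$ states (up to a fixed normalization) that $d_m^n$ is the Fourier coefficient of $g$ relative to $J_{m,n}^k$, so Theorem \ref{3:t1} gives $(d_m^n)\in\ell^2$ with $\sum_{m,n}|d_m^n|^2$ a multiple of $\|g\|_{H_2}^2$ and $g=\sqrt{2\pi}\sum d_m^n J_{m,n}^k$ in $H_2$. The second formula reduces to the first after substituting $f=g/\sqrt{R}$ and $G_{-m}^n=R^{-1/2}W_{\nu_m}^n(is_0,k')W_{\nu_m}^n(t,k')e^{-im\phi}$, and using the scale-factor identity $dS=R\,h(\r)\,dt\,d\phi$ on $\partial D_2$ (which comes from $h_\phi=R$ and the orthogonal-coordinate metric giving $h_t=h(\r)$); the factor $1/h(\r)$ in the integrand then cancels the $t$-scale factor and reproduces the parameter-space integral.

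For absolute and uniform convergence on a compact set $C\subset D_2$ disjoint from the coordinate singularities, choose $s_1\in(0,s_0)$ and $R_0>0$ such that every point of $C$ has $s$-coordinate in $[0,s_1]$ and $R\ge R_0$. Then
\[
|c_m^n G_m^n(\r)|=\frac{|d_m^n|}{\sqrt{R}}\,\frac{W_{\nu_m}^n(is,k')}{W_{\nu_m}^n(is_0,k')}\,|W_{\nu_m}^n(t,k')|.
\]
Lemma \ref{2:l5} (applied with $k,k'$ interchanged and $b=s_0$, $c=s_1$) bounds the middle factor by $2p^{n+|m|+1/2}$ for some $p=p(s_0,s_1)\in(0,1)$ independent of $m,n$; and Lemma \ref{2:l3} (for $|m|\ge 1$) together with Lemma \ref{2:l4} (for $m=0$) bounds $|W_{\nu_m}^n(t,k')|$ by $C_1(n+|m|+1)^{1/2}$ uniformly in $t\in(-K',K')$. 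A Cauchy-Schwarz estimate against $(d_m^n)\in\ell^2$ and the summable comparison series $\sum_{m,n}(n+|m|+1)p^{2(n+|m|+1/2)}$ give absolute and uniform convergence on $C$. For compact sets that meet the removable singularities of the coordinate system (parts of the $z$-axis or the inner disk $R\le b$), the same estimate is carried out on a slightly enlarged regular open subset and then extended by continuity of the harmonic functions $G_m^n$ across those singularities (Theorem \ref{3:internal}). Uniform convergence together with the harmonicity of each $G_m^n$ then makes $u$ harmonic on all of $D_2$.

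For the weak attainment of the boundary data, on the coordinate surface $s=s_1\in(0,s_0)$,
\[
\sqrt{R}\,u\big|_{s=s_1}(t,\phi)=\sqrt{2\pi}\sum_{m,n}\tilde d_m^n(s_1)\,J_{m,n}^k(t,\phi),\qquad \tilde d_m^n(s_1):=c_m^n\,W_{\nu_m}^n(is_1,k').
\]
Lemma \ref{2:l5} gives $|\tilde d_m^n(s_1)|\le|d_m^n|$ for $s_1\le s_0$, and $\tilde d_m^n(s_1)\to d_m^n$ pointwise as $s_1\to s_0$ by continuity of the Lam\'e-Wangerin factors in $s$. Parseval's identity in $H_2$ and the dominated convergence theorem then yield $\sqrt{R}\,u|_{s=s_1}\to g$ in $H_2$, which is the desired weak convergence. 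The main obstacle is the absolute-convergence estimate: because the parameter $\nu_m=|m|-\tfrac12$ itself grows with $|m|$, exponential decay in both indices $n$ and $|m|$ is needed simultaneously, and the uniformity of Lemma \ref{2:l5} in $\nu\ge-\tfrac12$ is exactly what makes this compatible with the polynomial growth bounds of Lemmas \ref{2:l3}--\ref{2:l4}.
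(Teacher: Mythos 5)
Your overall strategy is the same as the paper's: identify $c_m^n$ (times $W_{\nu_m}^n(is_0,k')$) as Fourier coefficients with respect to the basis of Theorem \ref{3:t1}, get geometric decay in $n+|m|$ from Lemma \ref{2:l5}, polynomial growth from Lemmas \ref{2:l3}--\ref{2:l4}, conclude uniform absolute convergence by a Weierstrass-type test, and obtain the weak boundary values by Parseval plus dominated convergence. Those parts are fine.

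There is, however, one genuine gap: your treatment of the factor $1/\sqrt{R}$ near the $z$-axis. You only use the bounds \eqref{est5} and \eqref{2:est9}, namely $\{W_{\nu_m}^n(t,k')\}^2\le C(n+|m|+1)$, and you absorb $1/\sqrt{R}$ by restricting to compact sets with $R\ge R_0$. But $D_2$ is a peanut region that \emph{contains} a segment of the $z$-axis, where $R\to 0$ (equivalently $t\to\pm K'$), so a compact subset of $D_2$ may meet the axis and your estimate degenerates there. The proposed fix --- ``carry out the estimate on a slightly enlarged regular open subset and extend by continuity of the $G_m^n$'' --- does not work: continuity of each individual term does not upgrade uniform convergence off the axis to uniform (or even absolute) convergence on the axis, and a maximum-principle patch fails for the same reason, since the boundary of any neighborhood of an axis segment still meets the axis. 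This is exactly why the paper proves the \emph{second} estimates in Lemmas \ref{2:l3} and \ref{2:l4}, i.e.\ \eqref{est6} and \eqref{2:est10}: combining $1/R\le \tfrac{2}{k'}\dc(t,k')$ (from \eqref{3:R}) with $\dc(t,k')\{W_{\nu_m}^n(t,k')\}^2\le C(n+|m|+1)^2$ gives
\[
\frac{1}{\sqrt{R}}\,\bigl|W_{\nu_m}^n(t,k')\bigr|\le C\,(n+|m|+1),
\]
uniformly in $t$ right up to $t=\pm K'$. With this replacement for your middle estimate the Weierstrass test goes through on \emph{all} compact subsets of $D_2$, with no case distinction at the axis, and the rest of your argument (including the Parseval/dominated-convergence step for the weak boundary values) stands.
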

\begin{proof}
As in the proof of \cite[Theorem 5.3]{BiCohlVolkmerA},
we see that the two formulas for $c_m^n$ agree.
Using \eqref{3:R}
we find
\[ 
\frac{1}{R}=\frac1{k'}\dn(s,k)\dc(t,k')+\frac{k}{k'} \cn(s,k)\nc(t,k')
\le \frac{2}{k'} \dc(t,k') .\]
Let $t\in(-K',K')$, $\phi\in(-\pi,\pi)$  and $0<s\le s_1<s_0$. Using Lemmas \ref{2:l3}, \ref{2:l4}, \ref{2:l5}
we estimate
\[
\frac{1}{\sqrt{R}}\left|\frac{W_{|m|-\frac12}^n(is,k')}{W_{|m|-\frac12}^n(is_0,k')}
W_{|m|-\frac12}^n(t,k')\expe^{im\phi}\right|\le
C p^{|m|+n} (1+|m|+n) ,\]
where the constants $C$ and $p\in(0,1)$ are independent of $m,n,s,t,\phi$.
Since $\{c_m^n\}$ is a bounded double sequences, this proves that the
series in \eqref{3:sol} is absolutely and uniformly convergent
on compact subsets of $D_2$. Consequently, by Theorem \ref{3:internal}, $u$ defined by \eqref{3:sol} is harmonic
in $D_2$.
We show that  $u$ attains the boundary values $f$ on $\partial D_2$ in the weak sense
by the same method as used in the proof of \cite[Theorem 5.3]{BiCohlVolkmerA}. In this argument we use
Theorem \ref{3:t1}.
\end{proof}

\subsection{External Peanut Harmonics}
External peanut harmonics are harmonic functions $u$ of the form \eqref{3:sepsol} which are harmonic outside
any peanut region $D_2$, that is, on all of $\R^3$ except the disk centered at the origin with radius $b$ in the $xy$-plane.
External peanut harmonics can simply be defined by
the Kelvin transformation \cite[Ch.\ IX, \S 2]{Kellogg}
of internal peanut harmonics. Note that this method was not available for flat-ring harmonics because
the Kelvin transformation of an internal flat-ring harmonic is again an internal flat-ring harmonic.

More explicitly, we define external peanut harmonics by
\begin{equation}\label{3:extpeanut}
 H^n_m(\r)=\frac{1}{\sqrt{R}} W^n_{|m|-\frac12}(2iK-is,k') W^n_{|m|-\frac12}(t,k')\expe^{im\phi}\quad\text{for $m\in\Z$, $n\in\N_0$.}
\end{equation}

\begin{thm}\label{3:external}
External peanut harmonics $H_m^n$ are harmonic functions on $\R^3$ except for the disk centered at the origin with radius $b$ in the $xy$-plane. Moreover,
\begin{eqnarray}
&&\hspace{-8cm} G^n_m(\sigma(\r))=||\r|| H^n_m(\r),\label{3:ext2}\\
&&\hspace{-8cm}H^n_m(x,y,-z)=(-1)^n H^n_m(\r), \label{3:ext3}\\
&&\hspace{-8cm}\lim_{\|\r\|\to\infty} H^n_m(\r)=0. \label{3:ext4}
\end{eqnarray}
\end{thm}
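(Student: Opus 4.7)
The plan is to prove \eqref{3:ext2} first by direct substitution, and then to deduce harmonicity, the reflection law \eqref{3:ext3}, and the decay \eqref{3:ext4} as easy consequences of \eqref{3:ext2}, which identifies $H_m^n$ as the $\R^3$-Kelvin transform of the internal peanut harmonic $G_m^n$.

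First I would compute $G_m^n(\sigma(\r))$ in flat-ring coordinates. Section~3.1 establishes that the inversion $\sigma$ at the unit sphere acts by $s\mapsto 2K-s$ while preserving $t$ and $\phi$, and rescales the cylindrical radius by $R(\sigma(\r))=R/\|\r\|^2$. Substituting into \eqref{3:intpeanut} and comparing with \eqref{3:extpeanut} yields
\[
G_m^n(\sigma(\r))=\frac{\|\r\|}{\sqrt{R}}\,W_{|m|-\frac12}^n(2iK-is,k')\,W_{|m|-\frac12}^n(t,k')\,\expe^{im\phi}=\|\r\|\,H_m^n(\r),
\]
which is \eqref{3:ext2}. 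Equivalently, $H_m^n(\r)=\|\r\|^{-1}G_m^n(\sigma(\r))$, so $H_m^n$ is the Kelvin transform of $G_m^n$ in three dimensions. Since the Kelvin transform preserves harmonicity, $H_m^n$ is harmonic at every $\r\neq\0$ for which $\sigma(\r)$ lies in the harmonicity set of $G_m^n$. By Theorem~\ref{3:internal} that set is the complement of $A:=\{(x,y,0):R\ge b^{-1}\}$, and a direct computation shows that $\sigma$ carries $A$ bijectively onto the punctured disk $\{(x,y,0):0<R\le b\}$; its union with $\{\0\}$ is the closed disk of radius $b$ in the $xy$-plane, which is exactly the claimed exceptional set.

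For \eqref{3:ext3}, the reflection $z\mapsto -z$ is expressed in flat-ring coordinates by $t\mapsto -t$ with $s,\phi,R$ unchanged; this follows from \eqref{Rzlab} using that $\cn(\cdot,k')$ and $\dn(\cdot,k')$ are even while $\sn(\cdot,k')$ is odd. Combining this with the parity relation $W_{|m|-\frac12}^n(-t,k')=(-1)^n W_{|m|-\frac12}^n(t,k')$ recorded in Section~\ref{sec2} and substituting into \eqref{3:extpeanut} gives the identity immediately. Finally, \eqref{3:ext4} follows from \eqref{3:ext2}: as $\|\r\|\to\infty$ we have $\sigma(\r)\to\0$, and since $\0\notin A$ the function $G_m^n$ is harmonic, hence continuous and bounded, in a neighborhood of $\0$, so $\|\r\|^{-1}G_m^n(\sigma(\r))\to 0$.

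I do not anticipate a genuine obstacle: the entire argument is driven by recognizing $H_m^n$ as the Kelvin transform of $G_m^n$, after which harmonicity, the reflection \eqref{3:ext3}, and the decay \eqref{3:ext4} each require only one short observation. The single point worth a careful check is the image of the singular set under inversion, so that the exceptional region is exactly the closed disk of radius $b$ rather than something larger.
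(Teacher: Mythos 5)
Your proposal is correct and follows essentially the same route as the paper: establish \eqref{3:ext2} from the fact that inversion acts by $s\mapsto 2K-s$ (with $R=\tilde R\,\|\r\|^2$), then read off harmonicity, the reflection law, and the decay from \eqref{3:ext2} together with Theorem~\ref{3:internal}. You simply supply more detail than the paper's two-line argument (e.g.\ the explicit image of the singular set under $\sigma$, and a direct parity computation for \eqref{3:ext3} where the paper instead invokes \eqref{3:Greflection}), all of which checks out.
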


\begin{proof}
As mentioned in Section 3.1 inversion at the unit sphere is expressed in flat-ring coordinates by
$s\mapsto 2K- s$.
This implies \eqref{3:ext2}.
Theorem \ref{3:internal} and \eqref{3:ext2} shows that $H_m^n$ is harmonic on the desired domain.
Equations \eqref{3:ext3} and \eqref{3:ext4} also follow from Theorem \ref{3:internal} and \eqref{3:ext2}.
\end{proof}

External harmonics admit an integral representation in terms of internal harmonics. Define the Wronskian $w_m^n$ by
\begin{equation}\label{3:wronskian}
w_m^n:=V(s)U'(s)-V'(s)U(s),
\end{equation}
where $U(s):=W^n_{|m|-\frac12}(is,k')$, $V(s):=U(2K-s)$.
The function $U(s)$ is a constant multiple of the solution $E(s)$ of \eqref{2:lame}
determined by initial conditions
$E(0)=1$, $E'(0)=0$ if $n$ is even and $E(0)=0$, $E'(0)=1$ if $n$ is odd. This equation has the form $E''=q(s)E$ with $q(s)>0$. Therefore, $E(s)>0$ and $E'(s)>0$ for $s>0$ which implies that $w_m^n\ne 0$.

\begin{thm}\label{3:intrep}
Let $s_0\in(0,2K)$, $m\in\Z$, $n\in\N_0$, and let $\r^\ast$ be  a point outside $\overline{D}_2$, where $D_2$ is the region given by $s<s_0$. Then
\begin{equation}\label{3:intrep1}
 H_m^n(\r^\ast)=\frac{w_m^n}{4\pi\{W_{|m|-\frac12}^n(is_0,k')\}^2}\int_{\partial D_2} \frac{G_m^n(\r)}{h(\r)\|\r-\r^\ast\|}{\mathrm d}S(\r).
\end{equation}
\end{thm}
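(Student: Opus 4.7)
The plan is to deduce \eqref{3:intrep1} from the peanut-harmonic expansion of the fundamental solution (Theorem~\ref{3:expansion}) by multiplying that expansion by $G_m^n(\r)/h(\r)$ and integrating over $\partial D_2$: the orthogonality on the boundary coming from Theorem~\ref{3:t1} then collapses the double series to a single term and exposes $H_m^n(\r^\ast)$ with exactly the stated prefactor.

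First, Theorem~\ref{3:expansion} gives, for $\r\in D_2$ and $\r^\ast$ outside $\overline{D}_2$, a convergent double expansion
\[
\frac{1}{\|\r-\r^\ast\|}=\sum_{m'\in\Z}\sum_{n'=0}^\infty \frac{2}{w_{m'}^{n'}}\,G_{-m'}^{n'}(\r)\,H_{m'}^{n'}(\r^\ast).
\]
The coefficient $2/w_{m'}^{n'}$ is the familiar Wronskian prefactor from the Sturm--Liouville Green's function in the $s$-variable: the function $U(s)=W^{n'}_{|m'|-\frac12}(is,k')$ is the interior-regular solution and $V(s)=U(2K-s)$ is the exterior-regular (Kelvin-inverted) solution described just before \eqref{3:wronskian}, so that the radial Green's function is $U(s_<)V(s_>)/w_{m'}^{n'}$; the numerical factor $2$ results from combining the Lam\'e--Wangerin normalization \eqref{LWnorm} and the $1/\sqrt{2\pi}$ Fourier normalization appearing in $J_{m,n}^k$ of Theorem~\ref{3:t1}.

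Next, I substitute this expansion into the right-hand side of \eqref{3:intrep1} and interchange summation with integration, which is justified by absolute and uniform convergence of the series on the compact boundary $\partial D_2$, following the same type of estimates used in the proof of Theorem~\ref{3:Dirichlet}. On the peanut surface $s=s_0$ one has
\[
G_{m''}^{n''}(\r)\big|_{\partial D_2}=W^{n''}_{|m''|-\frac12}(is_0,k')\,\frac{W^{n''}_{|m''|-\frac12}(t,k')}{\sqrt{R}}\,\expe^{im''\phi},
\]
while the flat-ring surface element factors (cf.\ \cite[\S2]{BiCohlVolkmerA}) as ${\mathrm d}S(\r)/h(\r)=R\,{\mathrm d}t\,{\mathrm d}\phi$. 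Combining these facts with the orthonormality of $\{J_{m,n}^k\}$ from Theorem~\ref{3:t1} yields
\[
\int_{\partial D_2}\frac{G_m^n(\r)\,G_{-m'}^{n'}(\r)}{h(\r)}\,{\mathrm d}S(\r)=2\pi\bigl\{W^n_{|m|-\frac12}(is_0,k')\bigr\}^2\,\delta_{m,m'}\delta_{n,n'},
\]
so that every term in the double sum vanishes save $(m',n')=(m,n)$. The surviving constant then cancels exactly against the prefactor $w_m^n/\bigl(4\pi\{W^n_{|m|-\frac12}(is_0,k')\}^2\bigr)$, leaving $H_m^n(\r^\ast)$ as claimed.

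The main obstacle is pinning down the coefficient $2/w_m^n$ in the expansion supplied by Theorem~\ref{3:expansion}, because conventions concerning complex conjugation of the $\expe^{im\phi}$ factor, the placement of the $4\pi$, and the Lam\'e--Wangerin normalization \eqref{LWnorm} all contribute multiplicatively and must be tracked in tandem. Once that coefficient is correctly identified, the remaining orthogonality computation is mechanical. An alternative route, avoiding any explicit handling of the exterior region, would apply Theorem~\ref{3:Dirichlet} directly to the harmonic function $f(\r)=1/\|\r-\r^\ast\|$ in $D_2$ and then match the resulting Dirichlet coefficients against those read off from the fundamental-solution expansion.
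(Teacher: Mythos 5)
Your computation of the boundary orthogonality is correct as arithmetic: with ${\mathrm d}S/h = R\,{\mathrm d}t\,{\mathrm d}\phi$ on $s=s_0$, the normalization \eqref{LWnorm} and the $\phi$-integral do give $\int_{\partial D_2} G_m^n G_{-m'}^{n'} h^{-1}{\mathrm d}S = 2\pi\{W^n_{|m|-\frac12}(is_0,k')\}^2\delta_{m,m'}\delta_{n,n'}$, and the constants then cancel as you say. But the argument is circular. In the paper, Theorem~\ref{3:expansion} is \emph{obtained by combining} Theorems~\ref{3:Dirichlet} and~\ref{3:intrep}: one applies the Dirichlet theorem to $f(\r)=\|\r-\r^\ast\|^{-1}$ on $\partial D_2$, and it is precisely the integral representation \eqref{3:intrep1} that converts the resulting coefficient $c_m^n$ into $\tfrac{2}{w_m^n}H_{-m}^n(\r^\ast)$, producing \eqref{3:expansion2}. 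So you cannot take Theorem~\ref{3:expansion} as given and integrate it against $G_m^n/h$ to recover Theorem~\ref{3:intrep}; you are running the paper's derivation backwards. Your ``alternative route'' at the end has the same defect, since matching the Dirichlet coefficients of $f$ ``against those read off from the fundamental-solution expansion'' again presupposes that expansion. (Your heuristic for the factor $2$ is also not quite right: it is simply $4\pi/(2\pi)$ from combining the two prefactors, not a consequence of the $1/\sqrt{2\pi}$ Fourier normalization.)

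What is needed, and what the paper does (by reference to the proof of [BiCohlVolkmerA, Theorem~5.5], which it declares ``very similar'' and omits), is a direct potential-theoretic argument that does not use the expansion. Concretely: apply Green's second identity to the pair $\bigl(G_m^n(\r),\ \|\r-\r^\ast\|^{-1}\bigr)$ on $D_2$, where both functions are harmonic since $\r^\ast\notin\overline{D}_2$, and to the pair $\bigl(H_m^n(\r),\ \|\r-\r^\ast\|^{-1}\bigr)$ on the exterior region with a small ball about $\r^\ast$ excised, using the decay \eqref{3:ext4} at infinity; the excised ball contributes $-4\pi H_m^n(\r^\ast)$. On $s=s_0$ the restrictions of $G_m^n$ and $H_m^n$ are proportional (both equal $R^{-1/2}W^n(t,k')\expe^{im\phi}$ times a constant in $s$), the normal derivative is $h^{-1}\partial_s$, and eliminating the unknown normal-derivative integrals between the two identities produces exactly the Wronskian $w_m^n=V U'-V'U$ of \eqref{3:wronskian} evaluated at $s_0$, together with the factor $\{W^n_{|m|-\frac12}(is_0,k')\}^2$ in the denominator. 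That is the proof you should supply; as written, your proposal verifies the consistency of Theorems~\ref{3:intrep} and~\ref{3:expansion} but proves neither.
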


We omit the proof of this theorem which is very similar to the proof of
\cite[Theorem 5.5]{BiCohlVolkmerA}.

\subsection{Expansion of the Fundamental Solution}
We obtain the desired expansion of $\|\r-\r^\ast\|^{-1}$ in internal and external peanut harmonics by combining Theorems  \ref{3:Dirichlet} and \ref{3:intrep}.

\begin{thm}\label{3:expansion}
Let $\r,\r^\ast\in \R^3$ with flat-ring coordinates $s,s^\ast\in(0,2K)$, $t,t^\ast\in(-K',K')$, respectively.
If $s<s^\ast$ then
\begin{equation}\label{3:expansion2}
\frac{1}{\|\r-\r^\ast\|}=2\sum_{m\in\Z}\sum_{n=0}^\infty
\frac{1}{w_m^n} G_m^n(\r)H_{-m}^n(\r^\ast).
\end{equation}
\end{thm}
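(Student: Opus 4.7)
The plan is to combine Theorem~\ref{3:Dirichlet} with Theorem~\ref{3:intrep} by applying the Dirichlet expansion to the fundamental solution viewed as a function of $\r$, and then recognizing the resulting coefficients as external peanut harmonics evaluated at $\r^\ast$. First, I would choose any auxiliary $s_0$ with $s<s_0<s^\ast$ and let $D_2$ denote the peanut region $s<s_0$. Then $\r\in D_2$ while $\r^\ast$ lies outside $\overline{D_2}$, so $\r\mapsto 1/\|\r-\r^\ast\|$ is real-analytic, hence harmonic, in a neighborhood of $\overline{D_2}$. Its restriction to the coordinate surface $s=s_0$ is smooth, so $\sqrt{R}/\|\r-\r^\ast\|$, expressed in flat-ring coordinates as a function of $(t,\phi)\in(-K',K')\times(-\pi,\pi)$, is continuous and bounded, and therefore lies in $H_2$.

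Next, apply Theorem~\ref{3:Dirichlet} with boundary data $f(\r)=1/\|\r-\r^\ast\|$ on $\partial D_2$. This produces a harmonic function in $D_2$ of the form $\sum_{m,n} c_m^n G_m^n(\r)$ that attains $f$ weakly on $\partial D_2$. By the uniqueness of the weak Dirichlet problem (established in \cite{BiCohlVolkmerA} and invoked in the paper for this setting), this series must coincide pointwise in $D_2$ with $1/\|\r-\r^\ast\|$, since the latter is itself harmonic in $D_2$ and trivially attains the same boundary values.

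It remains to evaluate the coefficients. Using the second (surface-integral) formula for $c_m^n$ from Theorem~\ref{3:Dirichlet} with $f=1/\|\r-\r^\ast\|$,
\begin{equation*}
c_m^n = \frac{1}{2\pi\,\bigl\{W^n_{|m|-\frac12}(is_0,k')\bigr\}^2}\int_{\partial D_2}\frac{G_{-m}^n(\r)}{h(\r)\,\|\r-\r^\ast\|}\,{\mathrm d}S(\r).
\end{equation*}
Now invoke Theorem~\ref{3:intrep} with $m$ replaced by $-m$: since $|-m|=|m|$ the factor $\{W^n_{|m|-\frac12}(is_0,k')\}^2$ is unchanged, and from definition \eqref{3:wronskian} we also have $w_{-m}^n=w_m^n$. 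Comparing the two identities yields $c_m^n = 2H_{-m}^n(\r^\ast)/w_m^n$, and substituting into the Dirichlet series gives \eqref{3:expansion2}. Although the argument uses an auxiliary $s_0$, the final series depends only on $\r$ and $\r^\ast$.

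The main obstacle I anticipate is the appeal to uniqueness of the weak Dirichlet problem needed to identify the series solution with $1/\|\r-\r^\ast\|$; once that is granted, the rest is essentially algebraic bookkeeping between the two integral representations, and absolute and uniform convergence of the series on compact subsets of $D_2$ is already guaranteed by Theorem~\ref{3:Dirichlet}.
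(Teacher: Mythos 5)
Your proposal is correct and follows exactly the route the paper intends: the paper gives no separate proof of Theorem~\ref{3:expansion} beyond the remark that it is obtained ``by combining Theorems~\ref{3:Dirichlet} and~\ref{3:intrep},'' which is precisely your argument. Your coefficient computation $c_m^n=2H_{-m}^n(\r^\ast)/w_m^n$ (using $w_{-m}^n=w_m^n$) and your appeal to uniqueness of the weak Dirichlet problem, already established in the paper's Section~3.3, are both sound.
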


\noindent Since we now have
\eqref{3:expansion2}, we can follow \cite{Cohlerratum12,CTRS} in order
to obtain an addition theorem for the associated
Legendre function of the second kind with odd-half-integer degree in terms of Lam\'e-Wangerin functions.
This proceeds  
through comparison of 
the azimuthal Fourier component of the $1/r$ potential in rotationally-invariant 
coordinate systems which separates Laplace's equation, such as flat-ring cyclide coordinates. 
\begin{thm}
\label{addnthm}
Let {$m\in\N_0$}, $0<s<s^\ast<2K$, $t,t^\ast\in(-K',K')$.
Then
\begin{eqnarray}
&&\hspace{-1.0cm}
Q_{m-\frac12}(\chi)
=
2\pi
\sum_{n=0}^\infty
\frac{1}{w_m^n}
W^n_{{m}-\frac12}(is,k')
W^n_{{m}-\frac12}(t,k')
W^n_{{m}-\frac12}(2iK-is^\ast,k')
W^n_{{m}-\frac12}(t^\ast,k'),
\end{eqnarray}
where $\chi:((0,2K)\times(-K',K'))^2\times(0,1)\to\R$ is given by 
\begin{eqnarray}
&&\hspace{-0.85cm}\chi
:=\chi(s,t,s^\ast,t^\ast;k)
\nonumber\\
&&\hspace{-0.50cm}:=k^2\sn(s,k)\sn(it,k)\sn(s^\ast,k)\sn(it^\ast,k)
 -\frac{k^2}{k'^2}\cn(s,k)\cn(it,k)\cn(s^\ast,k)\cn(it^\ast,k)\nonumber\\
 &&\hspace{6.05cm}+\frac{1}{k'^2}\dn(s,k)\dn(it,k)\dn(s^\ast,k)\dn(it^\ast,k).
 \label{chist}
\end{eqnarray}

\end{thm}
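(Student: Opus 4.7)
The idea, indicated already by the paper, is to match two different expansions of $\|\r-\r^\ast\|^{-1}$: the peanut-harmonic expansion of Theorem \ref{3:expansion}, and the classical Fourier-series (Heine/Cohl--Tohline) expansion in rotationally-invariant coordinates. The common factor $e^{im(\phi-\phi^\ast)}/\sqrt{RR^\ast}$ lets us read off the $m$-th Fourier coefficient on each side, after which the identification $Q_{m-\frac12}(\chi)=\dots$ is immediate up to a single computation identifying $\chi$ in flat-ring coordinates.

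\textbf{Step 1 (Heine-type identity).} In any rotationally-invariant coordinate system, writing $\r=(R\cos\phi,R\sin\phi,z)$, $\r^\ast=(R^\ast\cos\phi^\ast,R^\ast\sin\phi^\ast,z^\ast)$, we have, as in \cite{Cohlerratum12,CTRS},
\[
\frac{1}{\|\r-\r^\ast\|}=\frac{1}{\pi\sqrt{RR^\ast}}\sum_{m\in\Z}Q_{|m|-\frac12}(\chi_0)\,\expe^{im(\phi-\phi^\ast)},\qquad \chi_0:=\frac{R^2+R^{\ast 2}+(z-z^\ast)^2}{2RR^\ast},
\]
by applying the Heine identity to the cosine law $\|\r-\r^\ast\|^2=2RR^\ast(\chi_0-\cos(\phi-\phi^\ast))$.

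\textbf{Step 2 (Comparison with Theorem \ref{3:expansion}).} Substituting the explicit forms \eqref{3:intpeanut}, \eqref{3:extpeanut} of $G_m^n$ and $H_{-m}^n$ into Theorem \ref{3:expansion} and factoring out $1/\sqrt{RR^\ast}\cdot\expe^{im(\phi-\phi^\ast)}$, we obtain a double series whose inner sum over $n$ is, for each $m\in\Z$, absolutely and uniformly convergent on compact subsets of the region $\{s<s^\ast\}$ (this follows from the pointwise bounds in Lemmas \ref{2:l3}--\ref{2:l5} used already in the proof of Theorem \ref{3:Dirichlet}). Consequently the series converges in $L^2$ in the azimuthal variable, and we may identify Fourier coefficients termwise with those in Step 1. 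Using $|m|=m$ for $m\in\N_0$ gives the claimed formula up to verifying $\chi=\chi_0$.

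\textbf{Step 3 (Identification of $\chi$).} The remaining task is to show that the expression \eqref{chist} equals $\chi_0=\bigl(R^2+R^{\ast 2}+(z-z^\ast)^2\bigr)/(2RR^\ast)$ when $R,z$ and $R^\ast,z^\ast$ are given by \eqref{3:flatring}--\eqref{3:R}. Writing $\mathbf{p}:=\sn(s,k)\sn(it,k)$, $\mathbf{q}:=\cn(s,k)\cn(it,k)$, $\mathbf{r}:=\dn(s,k)\dn(it,k)$, and similarly $\mathbf{p}^\ast,\mathbf{q}^\ast,\mathbf{r}^\ast$, we have $1/R=(\mathbf{r}+k\mathbf{q})/k'$ and $z=-ikR\mathbf{p}$. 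The key elementary identity, verified by expanding with $\sn^2+\cn^2=1$ and $\dn^2+k^2\sn^2=1$, is
\[
\mathbf{r}^2-k^2\mathbf{q}^2=k'^2(1-k^2\mathbf{p}^2),
\]
which yields $R^2+z^2=R^2(1-k^2\mathbf{p}^2)=(\mathbf{r}-k\mathbf{q})/(\mathbf{r}+k\mathbf{q})$. A short algebraic manipulation then gives
\[
R^2+R^{\ast 2}+(z-z^\ast)^2=\frac{2\bigl(\mathbf{r}\mathbf{r}^\ast-k^2\mathbf{q}\mathbf{q}^\ast+k^2k'^2\mathbf{p}\mathbf{p}^\ast\bigr)}{(\mathbf{r}+k\mathbf{q})(\mathbf{r}^\ast+k\mathbf{q}^\ast)},
\]
while $2RR^\ast=2k'^2/[(\mathbf{r}+k\mathbf{q})(\mathbf{r}^\ast+k\mathbf{q}^\ast)]$, so dividing matches \eqref{chist} exactly.

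\textbf{Main obstacle.} The analytic part (coefficient-matching) is routine once absolute and uniform convergence of the peanut-harmonic series is invoked; the only real work is Step 3, the algebraic identification of $\chi$. The simplifying identity $\mathbf{r}^2-k^2\mathbf{q}^2=k'^2(1-k^2\mathbf{p}^2)$ is what makes this manageable and collapses what otherwise looks like an unwieldy product of Jacobi elliptic functions into the three-term form \eqref{chist}.
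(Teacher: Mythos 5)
Your proposal is correct and follows the same route as the paper: the paper's proof is precisely the comparison of the peanut expansion \eqref{3:expansion2} with the Cohl--Tohline azimuthal Fourier expansion, with the identification of $\chi$ in flat-ring coordinates cited from \cite[Lemma 5.7]{BiCohlVolkmerA}. Your Step 3 simply carries out inline the algebra behind that cited lemma, and it checks out (including the key identity $\mathbf{r}^2-k^2\mathbf{q}^2=k'^2(1-k^2\mathbf{p}^2)$).
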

\begin{proof}
This follows from comparison of \eqref{3:expansion2} with the azimuthal
Fourier expansion \cite[(15)]{CT}
\[ \frac{1}{\|\r-\r^\ast\|}=\frac{1}{\pi\sqrt{RR^\ast}}\sum_{m=0}^\infty Q_{m-\frac12}(\chi)\expe^{im(\phi-\phi^\ast)},\]
where
\[
\chi=\frac{R^2+{R^\ast}^2+(z-z^\ast)^2}{2RR^\ast},
\]
with $R$, $R^\ast$, $z$, $z^\ast$ given
in terms of $s,t$ and $s^\ast, t^\ast$
respectively \eqref{Rzlab},
and \eqref{chist} is given in \cite[Lemma 5.7]{BiCohlVolkmerA}.
\end{proof}
{
The addition Theorem \ref{addnthm} leads to an integral relation for Lam\'e-Wangerin functions.
}
{
\begin{thm}\label{3:intrel1}
Let $m,n\in\N_0$, $0<s<s^\ast<2K$, $-K'<t^\ast<K'$.
Then
\begin{equation} \int_{-K'}^{K'} Q_{m-\frac12}(\chi) W_{m-\frac12}^n(t,k')\,{\mathrm d}t= \frac{2\pi}{w_m^n} W^n_{m-\frac12}(is,k')
W^n_{m-\frac12}(2iK-is^\ast,k')
W^n_{m-\frac12}(t^\ast,k').
\end{equation}

\end{thm}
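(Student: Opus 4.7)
The plan is to derive the claimed identity by multiplying the addition formula in Theorem~\ref{addnthm} by the Lam\'e-Wangerin function $W^n_{m-\frac12}(t,k')$ and integrating in $t$ over $(-K',K')$, using the orthonormality of the system $\{W^n_{m-\frac12}(t,k')\}_{n=0}^\infty$ from Theorem~\ref{2:t2}.

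First, I fix $m\in\N_0$ and points $s,s^\ast,t^\ast$ with $0<s<s^\ast<2K$ and $t^\ast\in(-K',K')$, and write the addition formula as
\[
Q_{m-\frac12}(\chi(s,t,s^\ast,t^\ast;k))=\sum_{n=0}^\infty A_n\, W_{m-\frac12}^n(t,k'),
\]
where the coefficients
\[
A_n:=\frac{2\pi}{w_m^n}\,W^n_{m-\frac12}(is,k')\,W^n_{m-\frac12}(2iK-is^\ast,k')\,W^n_{m-\frac12}(t^\ast,k')
\]
are independent of $t$. Next, I justify that this series converges absolutely and uniformly on compact subsets of $(-K',K')$ (in $t$): the three $t$-independent Lam\'e-Wangerin factors are bounded by the same kind of exponential-times-polynomial estimates used in the proof of Theorem~\ref{3:Dirichlet} (via Lemma~\ref{2:l5}, since $0<s<s^\ast<2K$ allows comparison with a point $ib$ strictly inside the strip), while $|W_{m-\frac12}^n(t,k')|$ is controlled uniformly on compact subintervals of $(-K',K')$ by Lemmas~\ref{2:l3} and \ref{2:l4}; the resulting bound is $O(p^{n}(1+n)^{\alpha})$ for some $p\in(0,1)$.

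Given this uniform convergence, I multiply both sides by $W_{m-\frac12}^N(t,k')$ for a fixed $N\in\N_0$ and integrate term-by-term over $(-K',K')$. By the orthonormality relation \eqref{LWnorm} together with Theorem~\ref{2:t2},
\[
\int_{-K'}^{K'} W_{m-\frac12}^n(t,k')W_{m-\frac12}^N(t,k')\,{\mathrm d}t=\delta_{nN},
\]
so only the $n=N$ term survives and the identity reduces to $A_N$, which is exactly the right-hand side of the stated formula after relabeling $N\mapsto n$.

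The main obstacle is the justification of the term-by-term integration, i.e.\ showing the series converges in $L^2(-K',K')$ in $t$ (or uniformly on compacta, which suffices once one checks that $Q_{m-\frac12}(\chi)$ is locally integrable in $t$ near $\pm K'$). The required decay of the coefficients $A_n$ follows because the ratios $W^n_{m-\frac12}(is,k')/W^n_{m-\frac12}(is^\ast,k')$ and $W^n_{m-\frac12}(2iK-is^\ast,k')/W^n_{m-\frac12}(2iK-is,k')$ (together with the Wronskian normalization) decay geometrically in $n$ by Lemma~\ref{2:l5}, exactly as exploited in Theorem~\ref{3:Dirichlet}; this is then combined with the polynomial bounds from Lemmas~\ref{2:l3} and \ref{2:l4} for the remaining $t$-factors. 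Once uniform absolute convergence on compact subsets of $t\in(-K',K')$ is established, the interchange of summation and integration is automatic.
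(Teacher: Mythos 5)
Your proposal is correct and follows essentially the route the paper itself indicates for this theorem: the statement is obtained from the addition formula of Theorem~\ref{addnthm} by integrating against the orthonormal system $\{W^n_{m-\frac12}(t,k')\}$ of Theorem~\ref{2:t2}, with the term-by-term integration justified by the same coefficient estimates (Lemmas~\ref{2:l3}, \ref{2:l4}, \ref{2:l5}) used in Theorem~\ref{3:Dirichlet}. The paper additionally supplies an independent and more general derivation, obtaining Theorem~\ref{3:intrel1} as the specialization $s_0=s^\ast$, $s_1=s$, $t_0=t^\ast$, $\nu=m-\frac12$ of Theorem~\ref{3:intrel2}, which is proved by the method of fundamental solutions (a Green's-identity contour argument following \cite{Volkmer84}) rather than by orthogonality.
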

}


\noindent
Theorem \ref{3:intrel1} is a new result.
However, we are able to improve upon it by using the method of fundamental solutions employed in \cite{Volkmer84}.

In order to simplify notation we set
\begin{equation}\label{3:V}
V_\nu(s):=W_\nu^n(is,k)\quad\text{for $|\Im s|<K'$}
\end{equation}
for some $n\in\N_0$, $0<k<1$, $\nu\ge -\frac12$.
Then $V_\nu$ is a solution of Lam\'e's equation \eqref{2:lame} with $h=\nu(\nu+1)-\Lambda_\nu^n(k)$,
and it is a Fuchs-Frobenius solution at both regular singular points $s=\pm iK'$ belonging to the exponent $\nu+1$.

\begin{thm}\label{3:intrel2}
Let $\LW_\nu(z)$ be as in \eqref{3:V}.
If $0<s_0<2K$, $-s_0<s_1<s_0$, $-K'<t_0<K'$ then
\begin{equation}\label{inteq1}
2\pi\, \LW_\nu(2K-s_0)\LW_\nu(it_0)\LW_\nu(s_1)=[\tilde \LW_\nu,\LW_\nu]\int_{-K'}^{K'} Q_\nu(\chi(s_1,t,s_0,t_0))\LW_\nu(it)\,{\mathrm d}t,
\end{equation}
where $[\LW_\nu,\tilde \LW_\nu]$ denotes the Wronskian of $\LW_\nu$ and $\tilde \LW_\nu(z)=\LW_\nu(2K-z)$.
Since $\LW_\nu$ and $\tilde \LW_\nu$ are both solutions of \eqref{2:lame}, the Wronskian is a constant.
\end{thm}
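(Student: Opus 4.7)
I will fix $(s_0, t_0)$ in the allowed range and treat both sides of \eqref{inteq1} as functions of $s_1 \in (-s_0, s_0)$. Set
\[
I(s_1) := \int_{-K'}^{K'} Q_\nu\bigl(\chi(s_1,t,s_0,t_0)\bigr)\, \LW_\nu(it)\, {\mathrm d}t.
\]
My plan is to show $I(s_1)$ is a scalar multiple of $\LW_\nu(s_1)$ by verifying (a)~it satisfies the same Lam\'e equation as $\LW_\nu$, and (b)~it has the same Fuchs--Frobenius behavior at both regular singular points $s_1 = \pm iK'$; then I identify the overall constant.

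For (a), the engine is the separation-of-variables identity
\[
L_{s_1}^{h} Q_\nu(\chi) + M_t^{\lambda} Q_\nu(\chi) = 0 \qquad \text{(whenever $h + \lambda = \nu(\nu+1)$)},
\]
where $L_{s_1}^h$ is the Lam\'e operator \eqref{2:lame} and $M_t^{\lambda}$ is the modified Lam\'e operator \eqref{2:modlame} (with modulus $k'$). This identity follows by direct calculation from Legendre's equation for $Q_\nu$ together with the explicit form \eqref{chist} of $\chi$; it is the same separation used to build internal peanut harmonics and, being algebraic in $\nu$, extends from the classical half-integer case to all $\nu \ge -\tfrac12$. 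Choosing $\lambda = \Lambda_\nu^n(k')$ and applying $L_{s_1}^h$ under the integral sign, then integrating by parts twice in $t$ in the $M_t^\lambda$-term, one gets zero contributions from the bulk (since $M_t^\lambda \LW_\nu(it) = 0$) and from the endpoints (since $\LW_\nu(it) \to 0$ as $t \to \pm K'$ by \eqref{2:bc1}--\eqref{2:bc2}), giving $L_{s_1}^h I = 0$.

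For (b), I would analytically continue $I$ into the strip $|\Im s_1| < K'$ and inspect its behavior at $s_1 = \pm iK'$. As $s_1 \to iK'$ the point $\r$ (with $\phi = 0$) approaches the positive $z$-axis and $R(s_1,t) = O(s_1 - iK')$ uniformly in $t$, so $\chi \to \infty$; combining the classical asymptotic $Q_\nu(\chi) \sim c_\nu \chi^{-\nu-1}$ with the uniform Lam\'e--Wangerin bounds of Lemmas~\ref{2:l3}--\ref{2:l5} yields $I(s_1) = O((s_1 - iK')^{\nu+1})$, and similarly at $-iK'$. Thus $I(s_1)$ has Frobenius exponent $\nu+1$ at both singularities, which (up to scalar) is $\LW_\nu(s_1)$, giving $I(s_1) = C(s_0,t_0)\,\LW_\nu(s_1)$.

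To pin down $C(s_0, t_0)$, I would use the symmetry $\chi(s_1,t,s_0,t_0) = \chi(s_0,t_0,s_1,t)$ to re-run (a)--(b) in $s_0$ (where the outer Frobenius exponent-$(\nu+1)$ solution is $\tilde\LW_\nu$) and in $t_0$ (where the Lam\'e--Wangerin solution is $\LW_\nu(it_0)$), concluding $C(s_0, t_0) = E\,\tilde\LW_\nu(s_0)\,\LW_\nu(it_0)$ with a universal constant $E$ depending only on $\nu, n, k$. The value $E = 2\pi/[\tilde\LW_\nu,\LW_\nu]$ then follows from the method of fundamental solutions of \cite{Volkmer84}: applying Lagrange's identity for the Lam\'e operator to the pair $\{\LW_\nu, \tilde\LW_\nu\}$ on a contour about the diagonal singularity produces the Wronskian factor, while the residue of $Q_\nu(\chi)$ at the diagonal $\chi = 1^+$ contributes the factor $2\pi$. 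Identifying $E$ is the main obstacle of the proof: for $\nu = m-\tfrac12$ one could simply quote Theorem~\ref{3:intrel1}, but for generic $\nu$ there is no azimuthal Fourier-mode shortcut, and the Green's-identity calculation must be executed with careful tracking of the logarithmic singularity of $Q_\nu(\chi)$ as $\chi \to 1^+$.
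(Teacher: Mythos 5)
Your overall strategy --- show that $I(s_1):=\int_{-K'}^{K'}Q_\nu(\chi(s_1,t,s_0,t_0))\LW_\nu(it)\,{\mathrm d}t$ is annihilated by the Lam\'e operator in $s_1$ and has exponent $\nu+1$ at $s_1=\pm iK'$, hence is proportional to $\LW_\nu(s_1)$, and then identify the factor --- is a legitimate reorganization, and parts (a) and (b) are workable modulo the usual care with differentiation under the integral sign, with uniformity near the corners $(s_1,t)\approx(\pm iK',\pm K')$, and with the case $\nu=-\tfrac12$, where both exponents equal $\tfrac12$ and an $O(|s_1\mp iK'|^{\nu+1})$ bound alone does not exclude the logarithmic second solution. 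The paper instead gets everything in one stroke from the fundamental-solution formula of \cite[Theorem~1.11]{Volkmer84}: a Lagrange/Green identity around a rectangle containing the singularity $(s_0,t_0)$ of $Q_\nu(\chi)$ produces the term $2\pi\,\LW_\nu(2K-s_0)\LW_\nu(it_0)$ directly, the horizontal edges vanish as $t_{1,2}\to\mp K'$, the edge at $s=2K$ vanishes by parity, and a second application of the same identity on a singularity-free rectangle yields \eqref{eq2}, which assembles the remaining edge into the Wronskian times the integral.

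There are two genuine gaps. First, in step (c) the symmetry of $\chi$ only gives you that $C(s_0,t_0):=I(s_1)/\LW_\nu(s_1)$ satisfies the two-variable equation $(L_{s_0}^{h}+M_{t_0}^{\lambda})C=0$; a solution of that PDE need not be a separated product --- it is a priori a superposition $\sum_{n'}c_{n'}E_{n'}(s_0)W^{n'}(t_0)$ over all separation constants compatible with the exponent-$(\nu+1)$ behavior at $t_0=\pm K'$ --- so concluding $C=E\,\tilde\LW_\nu(s_0)\LW_\nu(it_0)$ with the \emph{same} index $n$ requires an additional argument (orthogonality against the $W^{n'}$, or asymptotics in $s_0$) that you do not supply. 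Second, and more seriously, the determination of $E=2\pi/[\tilde\LW_\nu,\LW_\nu]$, which you yourself flag as ``the main obstacle,'' is precisely the content of the theorem and is not carried out: you assert it ``follows from the method of fundamental solutions of \cite{Volkmer84}'' with careful tracking of the logarithmic singularity, but that Green's-identity computation \emph{is} the paper's entire proof, and for non-half-integer $\nu$ you cannot fall back on Theorem~\ref{3:intrel1}, since that result is deduced from the present theorem rather than the other way around. As written, the proposal establishes proportionality and then defers the normalization to the very machinery it was meant to replace.
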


\begin{figure}
\centering
\setlength{\unitlength}{1cm}
\thicklines
\begin{picture}(6,6)(-1,-3)
\put(-1,0){\vector(1,0){6}}
\put(-1,2){\line(1,0){6}}
\put(-1,-2){\line(1,0){6}}
\put(-0.5,-3){\vector(0,1){6}}
\put(0,-1.5){\vector(1,0){2}}
\put(2,-1.5){\line(1,0){2}}
\put(4,-1.5){\vector(0,1){2.3}}
\put(4,0){\line(0,1){1.5}}
\put(4,1.5){\vector(-1,0){2}}
\put(0,1.5){\line(1,0){2}}
\put(0,1.5){\vector(0,-1){2.3}}
\put(0,0){\line(0,-1){1.5}}
\put(-1,2.1){$K'$}
\put(-1.3,-2.4){$-K'$}
\put(5.05,0.1){$s$}
\put(-0.4,2.9){$t$}
\put(-0.9,1.4){$t_2$}
\put(-0.9,-1.5){$t_1$}
\put(0.1,0.2){$s_1$}
\put(4.1,0.15){$2K$}
\put(1.8,-1.3){$C_1$}
\put(1.8,1.1){$C_3$}
\put(0.2,-0.8){$C_4$}
\put(3.4,0.6){$C_2$}
\put(1,0.5){\circle*{0.1}}
\put(1.1,0.5){$(s_0,t_0)$}
\end{picture}
\caption{Path of integration used in \eqref{Hansint}. \label{fig1}}
\end{figure}
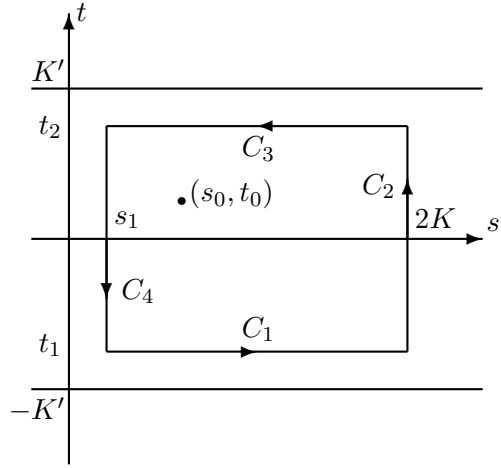

\begin{proof}
We define
\begin{eqnarray*}
&&\hspace{-9.3cm}u(s,t)= \LW_\nu(2K-s)\LW_\nu(it),\\
&&\hspace{-9.3cm}v(s,t)=Q_\nu(\chi(s,t,s_0,t_0)).
\end{eqnarray*}
The function $v(s,t)$ is well-defined for $(s,t)\in\R\times (-K',K')$ except for logarithmic singularities at
the points $(s_0+4 jK,t_0)$ and $(-s_0+4 j K,-t_0)$ with $j\in\Z$ \cite[Lemma 1.3]{Volkmer84}.
Let $s_2=2K$ and $-K'<t_1<t_0<t_2<K'$.
Let $C_1+C_2+C_3+C_4$ be the rectangular path as shown in Figure \ref{fig1}. This path forms the boundary of the
rectangle $[s_1,s_2]\times [t_1,t_2]$.
This rectangle contains the point $(s_0,t_0)$ but none of the other singularities of $v$. According to \cite[Theorem 1.11]{Volkmer84} we have
\begin{equation}
2\pi\, u(s_0,t_0)=\sum_{j=1}^4 \int_{C_j}\bigl( (u\partial_2v-v\partial_2u)\,{\mathrm d}s+(v\partial_1u-u\partial_1 v)\,{\mathrm d}t\bigr).
\label{Hansint}
\end{equation}
By our assumption on $\LW_\nu$ and \cite[Lemma 2.6]{Volkmer84}, the integral $\int_{C_1} (u\partial_2v-v\partial_2u)\,{\mathrm d}s$ converges to $0$ as $t_1\to -K'$, and
the integral $\int_{C_3} (u\partial_2v-v\partial_2u)\,{\mathrm d}s$ converges to $0$ as $t_2\to K'$.
Therefore, one obtains
\begin{equation}\label{eq1}
 2\pi \LW_\nu(2K-s_0)\LW_\nu(it_0)=I_1+I_2+I_3+I_4,
\end{equation}
where
\begin{eqnarray*}
&&\hspace{-7cm}I_1= \LW_\nu'(2K-s_1)\int_{-K'}^{K'} v(s_1,t)\LW_\nu(it)\,{\mathrm d}t,\\
&&\hspace{-7cm}I_2= \LW_\nu(2K-s_1)\int_{-K'}^{K'} \partial_1v(s_1,t)\LW_\nu(it)\,{\mathrm d}t,\\
&&\hspace{-7cm}I_3= -\LW_\nu'(0)\int_{-K'}^{K'} v(2K,t)\LW_\nu(it)\,{\mathrm d}t,\\
&&\hspace{-7cm}I_4= -\LW_\nu(0)\int_{-K'}^{K'} \partial_1v(2K,t)\LW_\nu(it)\,{\mathrm d}t.
\end{eqnarray*}
The function $\LW_\nu$ is even or odd. If $\LW_\nu$ is even then $\LW_\nu'(0)=0$ so $I_3=0$. If $\LW_\nu$ is odd then $v(2K,t)\LW_\nu(it)$ is an
odd function of $t$, so again $I_3=0$. In a similar way, we see that $I_4=0$.

\medskip
We now apply \cite[Theorem 1.11]{Volkmer84} to the
 counter-clockwise rectangular path $\tilde C$ defined using the vertices $(s_1,t_1)$, $(s_1,t_2)$,
$(0,t_1)$, $(0,t_2)$. This time we replace $u$ by $\tilde u(s,t)=\LW_\nu(s)\LW_\nu(it)$.
The path $\tilde C$ does not wind around a singularity of $v(s,t)$ so
\[ \int_{\tilde C} (\tilde u\partial_2v-v\partial_2\tilde u)\,{\mathrm d}s+(v\partial_1\tilde u-\tilde u\partial_1 v)\,{\mathrm d}t=0.
\]
As before, we let $t_1\to-K'$, $t_2\to K'$, and  note that the line integral along the segment
from $(0,-K')$ to $(0,K')$ vanishes.
Therefore, we obtain
\begin{equation}\label{eq2}
 \LW_\nu'(s_1)\int_{-K'}^{K'} v(s_1,t)\LW_\nu(it)\,{\mathrm d}t=\LW_\nu(s_1)\int_{-K'}^{K'} \partial_1v(s_1,t) \LW_\nu(it)\,{\mathrm d}t.
\end{equation}
If we combine \eqref{eq1} with $I_3=I_4=0$ and \eqref{eq2} we obtain \eqref{inteq1}.
\end{proof}

Theorem \ref{3:intrel2} implies Theorem \ref{3:intrel1} when we set $s_0=s^\ast$, $s_1=s$, $t_0=t^\ast$, $\nu=m-\frac12$
and replace $k$ by $k'$.
In \cite[pages 79, 80]{ErdelyiHTFIII} integral equations for Lam\'e-Wangerin functions are mentioned. An improved version of these
integral equations
can be obtained from \eqref{inteq1} by a limiting process as shown in the following theorem.

\begin{thm}\label{3:intrel3}
Let $\nu\ge -\frac12$ and $\LW_\nu(s)$ be as in \eqref{3:V}.
If $t_0\in(-K',K')$ then
\begin{equation}\label{inteq2} \LW_\nu(it_0)=
\frac{\expe^{\frac12(\nu+1)i\pi}\Gamma(\nu+1)}{2^{\nu+2}\sqrt{\pi}\,\Gamma(\nu+\frac32)}
\frac{[\tilde \LW_\nu,\LW_\nu]}{L_\nu(k)\LW_\nu(K-iK')}
\int_{-K'}^{K'} (f(t,t_0))^{-\nu-1}\LW_\nu(it)\,{\mathrm d}t,
\end{equation}
where
\begin{eqnarray*}
&&\hspace{-5cm}L_\nu(k):= \lim_{u\to K'-} \cn(u,k')^{-\nu-1} \LW_\nu(iu),\\
&& \hspace{-5cm}f(t,t_0):=k\sn(it,k)\sn(it_0,k)+\frac{k}{k'}\cn(it,k)\cn(it_0,k).
\end{eqnarray*}
\end{thm}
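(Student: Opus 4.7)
The plan is to derive \eqref{inteq2} from \eqref{inteq1} by specializing the parameters $(s_0,s_1)$ to complex values and taking a degeneration limit in which the kernel $Q_\nu(\chi)$ collapses to a constant multiple of $f(t,t_0)^{-\nu-1}$. Both sides of \eqref{inteq1} are holomorphic in $(s_0,s_1)$ on a complex neighborhood of the real parameter range---the integrand on the right is holomorphic so long as $\chi$ avoids the branch cut $(-\infty,1]$ of $Q_\nu$---so by the identity theorem \eqref{inteq1} extends to the complex values needed below.

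The key specialization is $s_0=2K-iu$ with $u\to K'{}^-$ and $s_1=K-iK'$. This choice is forced by three requirements: (i)~$\dn(K-iK',k)=0$, which kills the $\dn\dn$-term in the definition \eqref{chist} of $\chi$; (ii)~$\cn(K-iK',k)/\sn(K-iK',k)=ik'$, which makes the two surviving terms of $\chi$ proportional to $f(t,t_0)$; and (iii)~$2K-s_0=iu$, so that $\LW_\nu(2K-s_0)=\LW_\nu(iu)$ and the Fuchs--Frobenius behavior $\LW_\nu(iu)\sim L_\nu(k)\,\cn(u,k')^{\nu+1}$---the very definition of $L_\nu(k)$---controls the denominator once \eqref{inteq1} is divided by $2\pi\LW_\nu(2K-s_0)\LW_\nu(s_1)$. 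Combining Jacobi's imaginary transformation with $\cn(u,k')\sim k(K'-u)$ near $u=K'$ yields $\chi(s_1,t,s_0,t_0)\sim if(t,t_0)/(k(K'-u))$.

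Substituting the classical large-argument asymptotic $Q_\nu(z)\sim\sqrt{\pi}\,\Gamma(\nu+1)\bigl(\Gamma(\nu+\tfrac32)(2z)^{\nu+1}\bigr)^{-1}$ (principal branch of $(2z)^{\nu+1}$) gives
\[
Q_\nu(\chi)\sim\frac{\sqrt{\pi}\,\Gamma(\nu+1)\,i^{-\nu-1}}{2^{\nu+1}\Gamma(\nu+\tfrac32)}\,\bigl(k(K'-u)\bigr)^{\nu+1}f(t,t_0)^{-\nu-1}.
\]
The factor $\bigl(k(K'-u)\bigr)^{\nu+1}\sim\cn(u,k')^{\nu+1}$ cancels exactly against the corresponding factor in $\LW_\nu(iu)$ in the denominator, leaving a finite limit. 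Collecting the gamma and $\pi$ constants, and reconciling the phase---via the parity relation $\LW_\nu(-iu)=\pm\LW_\nu(iu)$ and the choice of branch of $i^{\pm(\nu+1)}$---with the $\expe^{(\nu+1)i\pi/2}$ and the $\LW_\nu(K-iK')$ of \eqref{inteq2} then reads off the identity.

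The main obstacle is the rigorous interchange of $\lim_{u\to K'{}^-}$ with the integration over $t$, since $f(t,t_0)\to\infty$ at the endpoints $t=\pm K'$. The crucial auxiliary identity is
\[
f(t,t_0) = \frac{k}{k'\,\cn(t,k')\cn(t_0,k')}\bigl(1-k'\sn(t,k')\sn(t_0,k')\bigr),
\]
from which $k'|\sn|<1$ on $(-K',K')$ gives $f>0$ throughout and $f\sim\cn(t,k')^{-1}$ at the endpoints, so $f^{-\nu-1}$ is integrable on $(-K',K')$. Lemmas~\ref{2:l3} and~\ref{2:l4} supply $u$-independent bounds on $\LW_\nu(it)$; the error term in the $Q_\nu$ asymptotic is $O(|\chi|^{-\nu-3})$, uniform in $t$; and these together furnish an integrable, $u$-independent majorant for the normalized integrand $Q_\nu(\chi)\LW_\nu(it)/\bigl(k(K'-u)\bigr)^{\nu+1}$. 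Dominated convergence then completes the argument, and a careful final bookkeeping of the branch of $(2\chi)^{\nu+1}$ and the evenness/oddness of $\LW_\nu$ pins down the exact phase and sign in \eqref{inteq2}.
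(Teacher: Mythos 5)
Your proposal follows essentially the same route as the paper's proof: analytically continue Theorem \ref{3:intrel2} to complex $s_0,s_1$, arrange a confluence in which $\cn(u,k')\,\chi \to i f(t,t_0)$ as $u\to K'$, invoke the large-argument asymptotics of $Q_\nu$, and pass the limit through the integral by dominated convergence using the lower bound $f(t,t_0)\ge k(1/k'-1)>0$. The only real difference is your choice of specialization ($s_1=K-iK'$ fixed, $s_0=2K-iu$) versus the paper's ($s_1=iu$, $s_0=K+iu$); both yield the same limiting left-hand side $2\pi L_\nu(k)\LW_\nu(K-iK')\LW_\nu(it_0)$ and the same kernel, and your deferral of the exact phase $\expe^{\pm\frac12(\nu+1)i\pi}$ to a final bookkeeping step matches the level of detail in the paper.
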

\begin{proof}
Let $t,t_0,u\in(-K',K')$. Then the function $\chi$ defined in \eqref{chist} satisfies
\[\chi(i u,t,K+iu,t_0;k)= ikf(t,t_0)\ssc(u,k')\nd(u,k')+\frac1{k'}\dn(it,k)\dn(it_0,k).
\]
It follows that
\begin{equation}\label{ineqchi}
 \Re\, \chi(i u,t,K+iu,t_0;k)=\frac1{k'}\dn(it,k)\dn(it_0,k)\ge \frac1{k'}>1 .
\end{equation}
Therefore, $Q_\nu(\chi(i u,t,K+iu,t_0;k))$ is an analytic function of $(t,t_0,u)\in(-K',K')^3$.
By analytic continuation it can be shown that \eqref{inteq1} implies
\begin{equation}\label{eq3}
2\pi\, \LW_\nu(K-iu)\LW_\nu(it_0)\LW_\nu(iu)=[\tilde \LW_\nu,\LW_\nu]\int_{-K'}^{K'} Q_\nu(\chi(iu,t,K+iu,t_0;k))\LW_\nu(it)\,{\mathrm d}t.
\end{equation}
We multiply \eqref{eq3} on both sides by $\cn^{-\nu-1}(u,k')$ and take the limit as $u\to~K'$.
Then we have on the left-hand side
\[ \lim_{u\to K'} \cn^{-\nu-1}(u,k')2\pi\, \LW_\nu(K-iu)\LW_\nu(it_0)\LW_\nu(iu)=2\pi\, \LW_\nu(K-iK')\LW_\nu(it_0) L_\nu(k).\]
On the right-hand side we have
\[ [\tilde \LW,\LW] \lim_{u\to K'} \int_{-K'}^{K'} \cn^{-\nu-1}(u,k')Q_\nu(\chi(iu,t,K+iu,t_0;k))\LW(it)\,{\mathrm d}t .\]
Suppose that the limit can be taken inside the integral. So we consider
\begin{equation}\label{eq4}
 \lim_{u\to K'}  \cn^{-\nu-1}(u,k')Q_\nu(\chi(iu,t,K+iu,t_0;k)) =\expe^{-\frac12(\nu+1)i\pi}
 \frac{\sqrt\pi\,\Gamma(\nu+1)}{2^{\nu+1}\Gamma(\nu+\frac32)} \left( f(t,t_0)\right)^{-\nu-1}.
 \end{equation}
This follows from the asymptotic behavior of the Legendre function $Q_\nu$ \cite[(14.8.15)]{NIST:DLMF}
\[ Q_\nu(z)=\frac{\sqrt\pi\,\Gamma(\nu+1)}{2^{\nu+1}\Gamma(\nu+\frac32)} z^{-\nu-1}(1+O(z^{-2}))\quad\text{as $|z|\to\infty$} .\]

We now justify the interchange of limit and integral.
%
\noindent Note that \eqref{ineqchi}
implies that there is a constant $C>0$ (independent of $t,t_0,u$) such that
\begin{equation}\label{ineq1}
 |Q_\nu(\chi(iu,t,K+iu,t_0;k))|\le C |\chi(iu,t,K+iu,t_0;k)|^{-\nu-1} .
\end{equation}
Moreover, we have
\begin{equation}\label{ineq2}
 f(t,t_0)=\frac{k(k'^{-1}-\sn(t,k')\sn(t_0,k'))}{\cn(t,k')\cn(t_0,k')}\ge k\left(\frac1{k'}-1\right)>0.
\end{equation}
Now \eqref{ineq1} and \eqref{ineq2} give
\begin{eqnarray*}
\hspace{-0.8cm} \cn^{-\nu-1}(u,k')|Q_\nu(\chi(iu,t,K+iu,t_0;k))|&\le & C \left\{\cn(u,k')|\chi(iu,t,K+iu,t_0;k)|\right\}^{-\nu-1} \\
\hspace{-0.8cm}&\le & C \left\{f(t,t_0)\sn(u,k')\right\}^{-\nu-1}\\
\hspace{-0.8cm}&\le & C \left\{\frac{k}2 \left(\frac1{k'}-1\right)\right\}^{-\nu-1}
\end{eqnarray*}
provided that $\sn(u,k')\ge \frac12$.
This justifies the interchange of limit and integral by Lebesgue's bounded convergence theorem, and
therefore the proof of \eqref{inteq2}
is complete.
\end{proof}

Note that we have also verified the above addition theorem and
integral formulas numerically.

\section{Application of the $k\to 1$ limit of flat-ring coordinates}

In this section we show that, as $k\to 1$, flat-ring coordinates
becomes spherical coordinates.
Then we show how our expansion of the $1/r$ potential
in peanut harmonics becomes
the multipole expansion of the $1/r$ potential
\cite[p.~1273-1274, (10.3.37)]{MorseFesh} in spherical coordinates.

\subsection{Flat-ring coordinates in the limit $k\to1$ are spherical coordinates}

\begin{figure}[ht]
\begin{center}
\includegraphics[clip=true,trim={1.3cm 0.4cm 5.9cm 1.2cm},width=10cm]{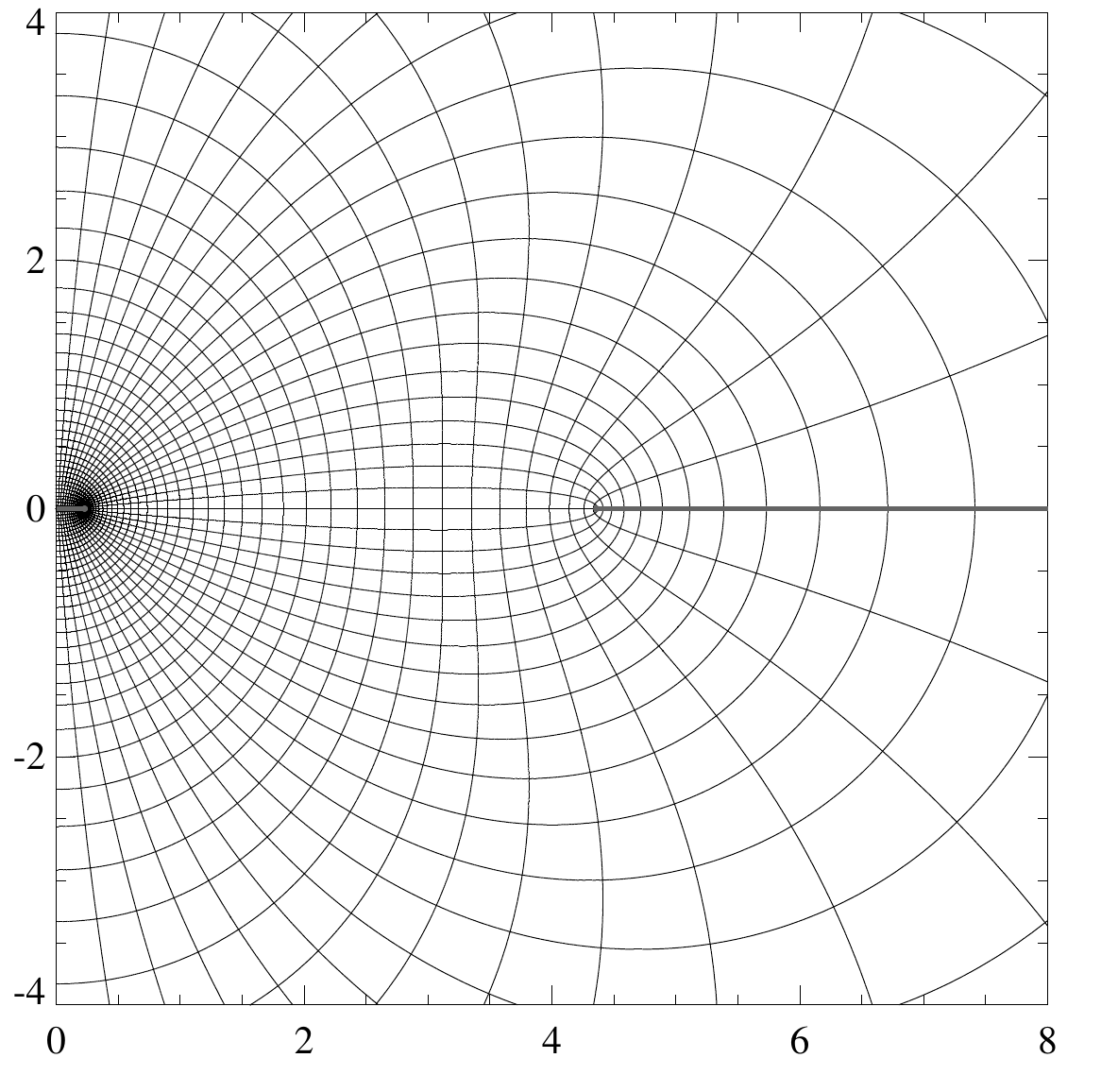}
\caption{Flat-ring cyclide coordinates with $k=\frac{9}{10}$. Then the asymptotic inner and outer radii of the flat-rings, are respectively given $b=1/\sqrt{19}\simeq 0.2294$ and $b^{-1}=\sqrt{19}\simeq 4.359$. 
Small dark grey circles are drawn
at the points $(0,b)$ and $(0,b^{-1})$
(see \eqref{bdef})
and dark grey thick line segments are drawn 
to represent the intervals $[0,b^{-1}]$ and
$[b,\infty)$.
The abscissa represents the radial coordinate $R=(x^2+y^2)^{1/2}$ and the ordinate represents the $z$-axis. One can see that as $k$ is approaching $1$, the coordinate system close to the origin is starting to resemble spherical coordinates. Similarly, as $k$ approaches $1$, the flat-ring coordinate surfaces near the plane $z=0$ have larger and larger outer-radii.\label{4:fig1}
}
\end{center}
\end{figure}

Spherical coordinates in $\R^3$, $r\ge 0$, $\theta\in[0,\pi]$, $\phi\in[-\pi,\pi)$  are connected to Cartesian coordinates $x,y,z$ by the transformation
\begin{equation}\label{1:spherical}
x=r\, \cos\phi\,\sin\theta,\quad y=r\, \sin\phi \, \sin\theta,\quad z= r\,\cos\theta.
\end{equation}
To demonstrate their connection to flat-ring coordinates, let $\sigma\in\R$ and $\tau\in(0,\pi)$. We set $s=K+\sigma$ and $t=K'-\tau$. Then $t\in(-K',K')$ and,
for $k$ sufficiently close to $1$, $s\in(0,2K)$.
Now
\begin{equation}\label{4:R}
 R=\frac{\dn(\sigma,k)\sn(\tau,k')}{1-\sn(\sigma,k)\dn(\tau,k')} .
 \end{equation}
Therefore,
\begin{equation}\label{Rlimit}
 R\to \frac{\sech\sigma\sin\tau}{1-\tanh \sigma}=\expe^{\sigma}\sin\tau\quad \text{as $k\to 1$}
 \end{equation}
and so
\[ \lim_{k\to 1} x=\expe^\sigma\sin\tau\cos \phi,\quad \lim_{k\to 1}y=\expe^\sigma\sin\tau\sin \phi .\]
Moreover,
\[ \lim_{k\to 1} z =\lim_{k\to1} \frac{\cn(\sigma,k)\cn(\tau,k')}{1-\sn(\sigma,k)\dn(\tau,k')}=\frac{\sech\sigma\cos\tau}{1-\tanh \sigma}=\expe^\sigma\cos\tau. \]
Therefore, flat-ring coordinates approach spherical coordinates $r, \theta, \phi$ in the limit $k\to 1$ (with $\theta=\tau, r=\expe^\sigma$.)

\subsection{The limit of the Lam\'e-Wangerin functions $W_\nu^n(t,k)$ as $k\to 0$}

Note that in our above expansions we have
Lam\'e-Wangerin functions which are a function
of $k'=\sqrt{1-k^2}$. So for those Lam\'{e}-Wangerin
functions, the limit as $k\to 1$ is equivalent
to the limit of the Lam\'e-Wangerin functions with argument $k$ as $k\to 0$.

\medskip
Let $w(t)=\F_\nu(t,\lambda,k)$
be the solution of equation \eqref{2:modlame} such that \eqref{2:bc1} holds with $a_0=1$.
By analytic continuation, this function is well-defined in the strip $-K<\Re t<K$.
We note that the Lam\'e-Wangerin functions $W_\nu^n$ can be written as
\begin{equation}
W_\nu^n(t,k)=d_\nu^n(k) \F_\nu(t,\Lambda_\nu^n(k),k),
\end{equation}
where the constants $d_\nu(k)>0$ are chosen such that the normalization integral
\eqref{LWnorm} is satisfied.

We recall the following lemma 
\cite[Lemma 6.3]{BiCohlVolkmerA}. 

\begin{lemma}\label{4:l3}
Let $D$ be a simply-connected domain in the complex plane $\C$ containing $0$.
Let $p_n: D\to\C$ be a sequence of analytic functions for each $n\in\N\cup\{\infty\}$ such that $p_n(z)\to p_\infty(z)$ locally uniformly
on $D$, and $p_n(0)=\nu(\nu+1)$ for all $n\in\N\cup\{\infty\}$, where $\nu\ge -\frac12$. For each $n\in\N\cup\{\infty\}$, let $u_n:D\to\C$ be the unique analytic function such that
$u_n(0)=1$ and $y_n(z):=z^{\nu+1}u_n(z)$ solves
\begin{equation}\label{oden}
 z^2y_n''=p_n(z) y_n.
 \end{equation}
Then $u_n(z)\to u_\infty(z)$ locally uniformly on $D$.
\end{lemma}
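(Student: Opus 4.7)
The plan is to convert \eqref{oden} into an analytic ODE satisfied by $u_n$ alone, deduce convergence of its Taylor coefficients at $0$ from a three-term recurrence, and then upgrade to local uniform convergence on $D$ via normality (Montel) and Vitali.

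Substituting $y_n=z^{\nu+1}u_n$ into \eqref{oden}, using $p_n(0)=\nu(\nu+1)$, and dividing by $z^{\nu+1}$ gives
\[
z\,u_n''(z)+2(\nu+1)\,u_n'(z)=g_n(z)\,u_n(z),\qquad g_n(z):=\frac{p_n(z)-\nu(\nu+1)}{z},
\]
where $g_n$ extends analytically to $D$, and $g_n\to g_\infty$ locally uniformly on $D$ by Cauchy's integral formula. Expanding $u_n(z)=\sum_{j\ge 0}a_{n,j}z^j$ and $g_n(z)=\sum_{k\ge 0}g_{n,k}z^k$ with $a_{n,0}=1$, matching coefficients of $z^{j-1}$ yields
\[
j(j+2\nu+1)\,a_{n,j}=\sum_{k=0}^{j-1}g_{n,k}\,a_{n,j-1-k},\qquad j\ge 1,
\]
whose prefactor is strictly positive because $\nu\ge-\tfrac12$. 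Taylor coefficients are continuous functionals under local uniform convergence, so $g_{n,k}\to g_{\infty,k}$ for each $k$, and induction on $j$ then gives $a_{n,j}\to a_{\infty,j}$.

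Next I would establish local uniform boundedness of $\{u_n\}$ on $D$. Pick $r>0$ with $\overline{B(0,2r)}\subset D$; local uniform convergence provides $M:=\sup_n\sup_{|z|\le 2r}|g_n(z)|<\infty$, and Cauchy's inequality gives $|g_{n,k}|\le M(2r)^{-k}$ uniformly in $n$. The standard Frobenius majorant estimate applied to the recurrence then bounds $|u_n|$ uniformly on $\overline{B(0,r)}$. To extend uniform boundedness from $\overline{B(0,r)}$ to an arbitrary compact $K\subset D$, I rewrite the ODE on $D\setminus\{0\}$ as a first-order analytic system for $(u_n,u_n')^\top$ whose coefficient matrix $z\mapsto \bigl(\begin{smallmatrix}0&1\\g_n(z)/z&-2(\nu+1)/z\end{smallmatrix}\bigr)$ is uniformly bounded in $n$ on compact subsets of $D\setminus\{0\}$. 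Since $D$ is simply connected, $K$ can be joined to $\overline{B(0,r)}$ by a finite chain of disks in $D\setminus\{0\}$, and a Gronwall estimate along each disk propagates the uniform bound on $(u_n,u_n')$ from $\overline{B(0,r)}$ out to $K$.

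Thus $\{u_n\}$ is locally uniformly bounded, hence normal on $D$ by Montel's theorem. Coefficient-wise convergence together with the uniform Cauchy bounds $|a_{n,j}|\le C\rho^{-j}$ gives $u_n\to u_\infty$ uniformly on a disk around $0$ (by dominated convergence of the power series), so Vitali's theorem forces $u_n\to u_\infty$ locally uniformly on all of $D$. The main obstacle is the propagation step: continuous dependence of ODE solutions on parameters breaks down at the singular point $0$, so the uniform bound near $0$ must be obtained separately via the explicit Frobenius series, and simply-connectedness of $D$ is exactly what permits the chain-of-disks continuation outside $0$ to stitch the two pieces together.
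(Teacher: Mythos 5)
The paper does not actually prove this lemma: it is quoted verbatim from the companion paper \cite[Lemma 6.3]{BiCohlVolkmerA}, so there is no in-text argument to compare against. Your proof is correct and self-contained. The reduction of \eqref{oden} to $zu_n''+2(\nu+1)u_n'=g_nu_n$ with $g_n=(p_n-\nu(\nu+1))/z$ analytic on $D$ is right; the recurrence $j(j+2\nu+1)a_{n,j}=\sum_{k=0}^{j-1}g_{n,k}a_{n,j-1-k}$ is the correct one, and its prefactor is nonzero for $j\ge1$ precisely because $2\nu+1\ge0$; and the three-stage structure --- coefficientwise convergence, a uniform Frobenius majorant bound near the regular singular point, Gronwall propagation of the bound for the first-order system away from $0$, then Montel plus Vitali --- is a standard and valid way to circumvent the failure of naive continuous dependence at $z=0$, which you correctly identify as the only real obstacle. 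Two small remarks, neither affecting correctness: the chain-of-disks continuation needs only that $D\setminus\{0\}$ is connected (automatic for a planar domain containing $0$); simple connectedness is relevant instead to the single-valuedness of $u_n$ on all of $D$, which the lemma's hypotheses already supply. And the induction base of the majorant estimate --- uniform-in-$n$ bounds on $a_{n,j}$ for the finitely many indices $j$ below the threshold at which the majorant step closes --- should be flagged as following from the already-established convergence $a_{n,j}\to a_{\infty,j}$, together with the uniform bound $\sup_n\sup_{|z|\le 2r}|g_n(z)|<\infty$ that local uniform convergence provides.
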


\begin{thm}\label{4:t1}
For every $n\in\N_0$ and $\nu\ge-\frac12$, we have
\begin{eqnarray}
 &&\hspace{-0.6cm}\tau^{-\nu-1}\F_\nu(K-\tau,\Lambda_\nu^n(k),k)
 \to \left(\frac{\sin \tau}{\tau}\right)^{\nu+1} \hyp21{-n,n+2\nu+2}{\nu+\tfrac32}{\sin^2 \tfrac{\tau}{2}}
 \label{4:limit}
 \end{eqnarray}
as $k\to 0$ locally uniformly for $|\Re\tau|<\pi$.
\end{thm}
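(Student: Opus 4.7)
The plan is to transform equation \eqref{2:modlame} into the form required by Lemma \ref{4:l3} and then apply it. Substituting $\tau = K - t$ and using Jacobi's identity $\dc(K-\tau,k) = \ns(\tau,k)$, the function $v_k(\tau) := \F_\nu(K-\tau, \Lambda_\nu^n(k), k)$ satisfies equation \eqref{2:modlame2}, which after multiplication by $\tau^2$ takes the form $\tau^2 v_k'' = p_k(\tau)\, v_k$ with
\[
p_k(\tau) := \nu(\nu+1)\left(\frac{\tau}{\sn(\tau,k)}\right)^2 - \Lambda_\nu^n(k)\,\tau^2.
\]
Since $\sn(\tau,k) = \tau + O(\tau^3)$, $p_k$ extends analytically to a neighborhood of $\tau = 0$ with $p_k(0) = \nu(\nu+1)$, and the Frobenius condition $a_0 = 1$ in \eqref{2:bc1} translates to the factorization $v_k(\tau) = \tau^{\nu+1} u_k(\tau)$ with $u_k(0) = 1$.

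Next I would verify the hypotheses of Lemma \ref{4:l3} on the strip $D := \{\tau \in \C : |\Re \tau| < \pi\}$. The zeros of $\sn(\cdot,k)$ on $\R$ nearest to $0$ are at $\pm 2K$, which approach $\pm\pi$ as $k \to 0$, while the poles $\pm 2iK'$ escape to $\pm i\infty$; hence for $k$ sufficiently small, $p_k$ is analytic on $D$ and $\sn(\tau,k) \to \sin\tau$ locally uniformly there. Combined with $\Lambda_\nu^n(k) \to (n+\nu+1)^2$, which follows from Lemma \ref{2:l2} by squeezing since $\omega = \pi/(2K) \to 1$, this gives $p_k \to p_\infty$ locally uniformly on $D$, where
\[
p_\infty(\tau) := \nu(\nu+1)\left(\frac{\tau}{\sin\tau}\right)^2 - (n+\nu+1)^2\,\tau^2.
\]
Lemma \ref{4:l3} then yields $u_k \to u_\infty$ locally uniformly on $D$, where $u_\infty$ is normalized by $u_\infty(0) = 1$ and $\tau^{\nu+1} u_\infty$ solves the limiting equation $\tau^2 y'' = p_\infty(\tau)\, y$, which is precisely \eqref{2:lame0} with $\lambda = (n+\nu+1)^2$.

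Finally, it remains to identify $u_\infty$. By the remarks following equation \eqref{2:lame0}, the Frobenius solution of \eqref{2:lame0} belonging to exponent $\nu + 1$ at the origin is a scalar multiple of $(\sin\tau)^{\nu+1} P_n^{(\nu+\frac12,\nu+\frac12)}(\cos\tau)$. Normalizing so that the companion factor $u_\infty$ satisfies $u_\infty(0) = 1$ — equivalently, dividing by $P_n^{(\nu+\frac12,\nu+\frac12)}(1) = (\nu+\tfrac32)_n/n!$ — and invoking the classical Jacobi-polynomial-to-hypergeometric identity
\[
P_n^{(\nu+\frac12,\nu+\frac12)}(\cos\tau) = \frac{(\nu+\tfrac32)_n}{n!}\hyp{2}{1}{-n,\,n+2\nu+2}{\nu+\tfrac32}{\sin^2\tfrac{\tau}{2}}
\]
yields precisely the right-hand side of \eqref{4:limit}. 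I expect the main technical delicacy to be ensuring that $p_k$ is analytic on all of $D$ uniformly in small $k$; fortuitously, the obstruction — the nearest zeros of $\sn(\cdot,k)$ away from $0$ — sits at $\pm 2K \to \pm\pi$, so the theorem's domain $|\Re\tau|<\pi$ is exactly the natural one.
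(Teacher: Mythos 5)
Your proof is correct and follows essentially the same route as the paper's: pass to the $\ns^2$ form of the equation, obtain $\Lambda_\nu^n(k)\to(n+\nu+1)^2$ from Lemma \ref{2:l2}, apply Lemma \ref{4:l3} with $p_k(\tau)=\nu(\nu+1)\bigl(\tau/\sn(\tau,k)\bigr)^2-\Lambda_\nu^n(k)\tau^2$, and identify the normalized limit solution. The only differences are cosmetic --- the paper verifies the ${}_2F_1$ expression on the right-hand side of \eqref{4:limit} directly rather than passing through the Jacobi polynomial, and your parenthetical aside mislabels $\pm 2iK'$ as poles of $\sn(\cdot,k)$ (they are zeros; the poles are at $\pm iK'$ mod the lattice), though both escape to $\pm i\infty$ as $k\to0$ so the analyticity claim stands.
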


\begin{proof}
The function $w(\tau):=\F_\nu(K-\tau,\Lambda_\nu^n(k),k)$ satisfies the differential equation
\begin{equation}\label{4:ode2}
  w''+\left(\Lambda_\nu^n(k)-\nu(\nu+1)\ns^2(\tau,k)\right) w=0 .
\end{equation}
It follows from Lemma \ref{2:l2} that $\Lambda_\nu^n(k)\to (n+\nu+1)^2$ as $k\to 0$.
Moreover,
\begin{equation}\label{4:k0}
\sn(z,k)\to\sin z,\, \cn(z,k)\to \cos z,\, \dn(z,k)\to 1\quad\text{as $k\to 0$}
\end{equation}
locally uniformly on $\C$ \cite[(22.5.3)]{NIST:DLMF}.
Now \eqref{4:k0}
and the maximum principle for analytic functions give
\[ \frac{\tau}{\sn(\tau,k)}\to \frac{\tau}{\sin \tau}\quad\text{as $k\to 0$ locally uniformly for $|\Re \tau|<\pi$.}\]
Therefore, we can apply Lemma \ref{4:l3} to the differential equations \eqref{4:ode2} with the limit differential equation
\begin{equation}\label{4:ode3}
 v''+\left((n+\nu+1)^2-\nu(\nu+1)\csc^2\tau\right) v=0 .
\end{equation}
The right-hand side of \eqref{4:limit} tends to $1$ as $\tau\to0$, and when multiplied by $\tau^{\nu+1}$ it is a solution of \eqref{4:ode3} belonging to the exponent $\nu+1$ at $\tau=0$. Therefore, \eqref{4:limit} follows from Lemma \ref{4:l3}.
\end{proof}

Gegenbauer polynomials $C_n^{\lambda}(x)$ are given by \cite[(15.9.2)]{NIST:DLMF}
\[ C_n^{\lambda}(x)=\frac{(2\lambda)_n}{n!} \hyp21{-n,2\lambda+n}{\lambda+\tfrac12}{\frac{1-x}{2}} .\]
Therefore, the limit in \eqref{4:limit} can be expressed in terms of Gegenbauer polynomials
\[ \hyp21{-n,n+2\nu+2}{\nu+\tfrac32}{\sin^2 \tfrac{\tau}{2}}= \frac{n!}{(2\nu+2)_n}C_n^{\nu+1}(\cos\tau).\]
Since the convergence is uniform for $\tau\in[0,\pi-\delta]$ for every $\delta>0$, we obtain
for $w(\tau):=\F_\nu(K-\tau,\Lambda_\nu^n(k),k)$
\begin{eqnarray*}
&& \hspace{-1.3cm}\int_{-K}^K w(\tau)^2\,{\mathrm d}\tau=2\int_0^K w(\tau)^2\,{\mathrm d}\tau \\
 &&\hspace{0.5cm} \to  \left(\frac{n!}{(2\nu+2)_n}\right)^2
\int_{-1}^1 (1-x^2)^{\nu+\frac12} \left(C_n^{\nu+1}(x)\right)^2{\mathrm d}x \quad\text{as $k\to0$} .
\end{eqnarray*}
Define $e_\nu^n$ such that
\[ \hspace{0.5cm}e_\nu^n:=\int_{-1}^1 (1-x^2)^{\nu+\frac12}\left(C_n^{\nu+1}(x)\right)^2\,{\mathrm d}x =\frac{\pi}{2^{2\nu+1}n!}
\frac{\Gamma(n+2\nu+2)}{(n+\nu+1)
\Gamma(\nu+1)^2},\]
whose value follows from \cite[(4.7.15)]{Szego}.
Therefore, Theorem \ref{4:t1} implies the following result.

\begin{thm}\label{4:t2}
For every $n\in\N_0$ and $\nu\ge-\frac12$, we have
\[ \tau^{-\nu-1}W_\nu^n(K-\tau,k)\to \left(e_\nu^n\right)^{-1/2} \left(\frac{\sin \tau}{\tau}\right)^{\nu+1} C_n^{\nu+1}(\cos\tau) \text{ as $k\to 0$}\]
locally uniformly for $|\Re\tau|<\pi$.
\end{thm}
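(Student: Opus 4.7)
The plan is to combine Theorem~\ref{4:t1}, which gives the limit of $\F_\nu(K-\tau,\Lambda_\nu^n(k),k)$ as $k\to 0$, with the $L^2$-normalization \eqref{LWnorm} in order to pin down the limit of the normalization constant $d_\nu^n(k)$ appearing in $W_\nu^n(t,k)=d_\nu^n(k)\,\F_\nu(t,\Lambda_\nu^n(k),k)$. Once $d_\nu^n(k)$ is known in the limit, the theorem follows at once from Theorem~\ref{4:t1} together with the classical identity
\[
\hyp21{-n,n+2\nu+2}{\nu+\tfrac32}{\sin^2\tfrac{\tau}{2}}=\frac{n!}{(2\nu+2)_n}\,C_n^{\nu+1}(\cos\tau),
\]
which is just the standard Gauss representation of the Gegenbauer polynomial used earlier in the text.

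First I would square the relation $W_\nu^n=d_\nu^n(k)\,\F_\nu$, integrate from $-K$ to $K$, and invoke \eqref{LWnorm} to obtain
\[
\bigl(d_\nu^n(k)\bigr)^{-2}=\int_{-K}^{K}\bigl(\F_\nu(t,\Lambda_\nu^n(k),k)\bigr)^{2}\,dt.
\]
Changing variables $\tau=K-t$ and using the symmetry $t\mapsto -t$ of $W_\nu^n$ (which transfers to a symmetry of the integrand about $\tau=K$), I would reduce this to an integral on $[0,K]$ whose integrand, by Theorem~\ref{4:t1}, converges locally uniformly on $(0,\pi)$ to $\bigl(\tfrac{\sin\tau}{\tau}\bigr)^{2\nu+2}\tau^{2\nu+2}$ times the squared limit factor in \eqref{4:limit}.

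The main obstacle is justifying the interchange of limit and integral, since Theorem~\ref{4:t1} only gives local uniform convergence and does not immediately produce an integrable majorant. To establish dominated convergence I would use the Fuchs--Frobenius expansion \eqref{2:bc1} with $a_0=1$ to control $\F_\nu(K-\tau,\Lambda_\nu^n(k),k)=O(\tau^{\nu+1})$ near $\tau=0$, with a $k$-uniform constant coming from the continuous dependence of the leading Frobenius coefficient on $\Lambda_\nu^n(k)$ and $k$. Combined with the bounds of Lemmas \ref{2:l3} and \ref{2:l4} transferred from $W_\nu^n$ to $\F_\nu$ via a crude a priori lower bound on $d_\nu^n(k)$ (obtained by evaluating both sides at a fixed interior point and using continuity in $k$), this gives an integrable majorant on $[0,K]$ and hence dominated convergence.

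Applying dominated convergence then yields, after the substitution $x=\cos\tau$,
\[
\bigl(d_\nu^n(k)\bigr)^{-2}\longrightarrow \left(\frac{n!}{(2\nu+2)_n}\right)^{2}\int_{-1}^{1}(1-x^2)^{\nu+\frac12}\bigl(C_n^{\nu+1}(x)\bigr)^{2}dx=\left(\frac{n!}{(2\nu+2)_n}\right)^{2}e_\nu^n,
\]
so $d_\nu^n(k)\to (2\nu+2)_n/\bigl(n!\,(e_\nu^n)^{1/2}\bigr)$. Multiplying the limit in Theorem~\ref{4:t1} by this constant and invoking the Gegenbauer identity above produces the claimed local uniform convergence of $\tau^{-\nu-1}W_\nu^n(K-\tau,k)$ on $|\Re\tau|<\pi$.
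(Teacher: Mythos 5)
Your proposal follows essentially the same route as the paper: apply Theorem~\ref{4:t1}, pass to the limit in the normalization integral \eqref{LWnorm} to identify $\lim_{k\to0} d_\nu^n(k)=(2\nu+2)_n/\bigl(n!\,(e_\nu^n)^{1/2}\bigr)$ via the substitution $x=\cos\tau$, and then invoke the Gauss--Gegenbauer identity. The paper is terser at the one delicate point (it simply cites uniform convergence on $[0,\pi-\delta]$ to justify the limit of the integral, whereas you supply a dominating-function argument near the endpoints), but the underlying argument is the same and your version is correct.
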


In the application to peanut harmonics we are interested in the special case $\nu=m-\frac12$ with $m\in\N_0$.
The Ferrers function of the first kind $\PP_\nu^\mu$ satisfies the identity \cite[(18.11.1)]{NIST:DLMF}
\[ \PP_{m+n}^m(x)=\left(-\tfrac12\right)^m\frac{(2m)!}{m!} (1-x^2)^{m/2}C_n^{m+\frac12}(x),\]
where $m,n\in\N_0$.
Also using the duplication formula \cite[(5.5.5)]{NIST:DLMF}
\[ \Gamma(m+\tfrac12)\Gamma(m+1)=2^{-2m}\sqrt{\pi}\, \Gamma(2m+1) \]
we obtain the following corollary.

\begin{cor}\label{c1}
For $m,n\in\N_0$ we have
\[ \tau^{-m-\frac12}W_{m-\frac12}^n(K-\tau,k)\to \left(\frac{(m+n+\frac12)n!}{(2m+n)!}\right)^{1/2} (-\tau)^{-m} \left(\frac{\sin\tau}{\tau}
\right)^{1/2}\PP^m_{m+n}(\cos\tau) \]
as $k\to0$ locally uniformly for $|\Re\tau|<\pi$.
\end{cor}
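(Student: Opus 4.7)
The plan is to specialize Theorem \ref{4:t2} to $\nu=m-\frac12$ and then convert the Gegenbauer polynomial on the right-hand side into the Ferrers function of the first kind. Setting $\nu=m-\frac12$ in Theorem \ref{4:t2} gives
\[
\tau^{-m-\frac12}W^{n}_{m-\frac12}(K-\tau,k)\to \bigl(e_{m-\frac12}^n\bigr)^{-1/2}\left(\frac{\sin\tau}{\tau}\right)^{m+\frac12} C_n^{m+\frac12}(\cos\tau)
\]
locally uniformly for $|\Re\tau|<\pi$. Now I invoke the cited Ferrers-function identity $\PP_{m+n}^m(x)=\bigl(-\tfrac12\bigr)^{m}\frac{(2m)!}{m!}(1-x^2)^{m/2}\,C_n^{m+\frac12}(x)$ with $x=\cos\tau$, so that $(1-x^2)^{m/2}=\sin^m\tau$, to eliminate $C_n^{m+\frac12}(\cos\tau)$ in favor of $\PP_{m+n}^m(\cos\tau)/\sin^m\tau$, yielding a prefactor of $(-2)^m m!/(2m)!$.

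After this substitution the asymptotic relation reads
\[
\tau^{-m-\frac12}W^{n}_{m-\frac12}(K-\tau,k)\to \bigl(e_{m-\frac12}^n\bigr)^{-1/2}\cdot\frac{(-2)^m m!}{(2m)!}\cdot\frac{\sin^{\frac12}\tau}{\tau^{m+\frac12}}\,\PP^m_{m+n}(\cos\tau).
\]
Writing $(-\tau)^{-m}\bigl(\frac{\sin\tau}{\tau}\bigr)^{1/2}=(-1)^m\tau^{-m-\frac12}\sin^{\frac12}\tau$, it remains to verify the purely algebraic identity
\[
\bigl(e_{m-\frac12}^n\bigr)^{-1/2}\cdot\frac{2^m m!}{(2m)!}=\left(\frac{(m+n+\tfrac12)\,n!}{(2m+n)!}\right)^{1/2}.
\]

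For this last step I would substitute the explicit value of $e_{m-\frac12}^n$ given immediately before the corollary, namely
\[
e_{m-\frac12}^n=\frac{\pi\,(2m+n)!}{2^{2m}\,n!\,(m+n+\tfrac12)\,\Gamma(m+\tfrac12)^2},
\]
take its reciprocal square root, and then eliminate $\Gamma(m+\tfrac12)$ using the duplication formula \cite[(5.5.5)]{NIST:DLMF}, which gives $\Gamma(m+\tfrac12)=\sqrt{\pi}\,(2m)!/(2^{2m}m!)$. After cancellation only $\sqrt{n!(m+n+\tfrac12)/(2m+n)!}$ survives, matching the claimed constant. The main (and only) obstacle is careful bookkeeping of the powers of $2$, the factorials, and the sign $(-1)^m$; there is no analytic difficulty, since convergence is already supplied by Theorem \ref{4:t2}.
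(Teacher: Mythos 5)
Your proposal is correct and follows exactly the route the paper takes: specialize Theorem \ref{4:t2} to $\nu=m-\tfrac12$, rewrite $C_n^{m+\frac12}(\cos\tau)$ via the Ferrers-function identity, and collapse the constant using the explicit value of $e_{m-\frac12}^n$ together with the duplication formula. The constant and sign bookkeeping you outline checks out, and is in fact more explicit than the paper's own (essentially omitted) verification.
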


We now determine the limit of Lam\'e-Wangerin functions $W_\nu^n$ on the imaginary axis.
Note that $W_\nu^n$ takes on real values on this line if $n$ is even and purely imaginary values if $n$ is odd.
We recall \cite[Lemma 6.1]{BiCohlVolkmerA}.

\begin{lemma}\label{4:l1}
(a)
Let $a\in\R$ and let $\{b_n\}$ be a sequence of real numbers such that $a<b_n\to \infty$ as $n\to\infty$.\\
(b)
For $n\in\N$, let $p_n,q_n:[a,b_n]\to\R$ be continuous functions such that $q_n(x)<0$ for all $x\in[a,b_n]$.
For every $n\in\N$, let $y_n:[a,b_n]\to\R$ be a nontrivial solution of the differential equation
\begin{equation}\label{node}
y_n''+p_n(x)y_n'+q_n(x)y_n =0
\end{equation}
such that $y_n(b_n)=0$. \\
(c)
Let $p_\infty, q_\infty:[a,\infty)\to\R$ be continuous functions such that
$p_n(x)\to p_\infty(x)$ and $q_n(x)\to q_\infty(x)$ as $n\to\infty$ uniformly on each compact interval $[a,b]$. Suppose that the differential equation
\begin{equation}\label{limitode}
 y_\infty''+p_\infty(x)y'_\infty+q_\infty(x)y_\infty=0
\end{equation}
admits a bounded nontrivial solution $y_\infty:[a,\infty)\to\R$, and that every solution of \eqref{limitode} which is linearly independent of $y_\infty$
is unbounded as $x\to\infty$.

Under assumptions (a), (b), (c), we have
\begin{equation}\label{conv1}
 \frac{y_n(x)}{y_n(a)}\to \frac{y_\infty(x)}{y_\infty(a)} \text{ and} \quad \frac{y_n'(x)}{y_n(a)}\to \frac{y'_\infty(x)}{y_\infty(a)}
\end{equation}
as $n\to\infty$ uniformly on every compact interval $[a,b]$.
The same result is true if the condition $y_n(b_n)=0$ is replaced by $y_n'(b_n)=0$.
\end{lemma}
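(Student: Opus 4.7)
The plan is to reduce the uniform convergence claim to showing convergence of a single scalar, the logarithmic initial-data ratio $r_n:=y_n'(a)/y_n(a)$, to $r_\infty:=y_\infty'(a)/y_\infty(a)$. Once $r_n\to r_\infty$ is known, both halves of \eqref{conv1} follow at once from the classical Picard--Lindel\"of/Gronwall theorem on continuous dependence of ODE solutions on their initial data and coefficients: the normalized function $z_n:=y_n/y_n(a)$ solves the $n$-th equation with initial data $(1,r_n)$ at $x=a$, so on any fixed compact $[a,b]$ one gets $z_n\to y_\infty/y_\infty(a)$ and $z_n'\to y_\infty'/y_\infty(a)$ uniformly.

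To implement this, I would introduce fundamental systems. Let $\phi_n,\psi_n$ be the pair of solutions of the $n$-th equation determined by $\phi_n(a)=1,\phi_n'(a)=0$ and $\psi_n(a)=0,\psi_n'(a)=1$, and let $\phi_\infty,\psi_\infty$ be the analogues for the limit equation. Standard continuous-dependence estimates give $\phi_n\to\phi_\infty$ and $\psi_n\to\psi_\infty$, together with derivatives, uniformly on each compact $[a,b]$. Writing $z_n=\phi_n+r_n\psi_n$, the boundary condition $y_n(b_n)=0$ yields
\[
r_n=-\frac{\phi_n(b_n)}{\psi_n(b_n)},
\]
provided $\psi_n(b_n)\ne 0$. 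This last nonvanishing, as well as the nonvanishing $y_n(a)\ne 0$ that legitimizes the normalization, is secured by the sign hypothesis $q_n<0$: at any interior maximum of a positive solution the differential equation forces $y_n''=-q_ny_n>0$, contradicting the max, so neither $\psi_n$ nor $y_n$ can have a second zero on $[a,\infty)$. On the limit side, the hypothesis that $y_\infty$ is, up to scale, the only bounded solution at $+\infty$ singles out $r_\infty$ as the unique real number for which $\phi_\infty+r\psi_\infty$ stays bounded on $[a,\infty)$.

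The crux, and the main obstacle, is therefore the scalar convergence $r_n\to r_\infty$. I would argue by contradiction: if along a subsequence $r_n\to r^\ast\in\R\cup\{\pm\infty\}$ with $r^\ast\ne r_\infty$, then (after renormalizing by $|r_n|$ when $r^\ast=\pm\infty$) $z_n$ converges on compacts to $y^\ast:=\phi_\infty+r^\ast\psi_\infty$, respectively $\psi_\infty$, which by the uniqueness hypothesis is unbounded on $[a,\infty)$. The difficulty is that compact convergence gives no control of $z_n$ near the moving boundary $x=b_n\to\infty$, where the zero is enforced. To bridge this gap one exploits $q_n<0$ once more: in the self-adjoint form $(e^{P_n}y_n')'=-q_ne^{P_n}y_n$ with $P_n'=p_n$, on any terminal interval on which $y_n$ keeps a constant sign $y_n$ is monotone toward its zero at $b_n$, and a Riccati-type comparison (or equivalently a WKB dichotomy between a growing and a decaying fundamental solution) forces $y_n$ to align with the decaying solution as $b_n\to\infty$, ruling out the unbounded limit $y^\ast$. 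Conceptually, this is the Weyl--Titchmarsh limit-point alternative at $+\infty$: the hypothesis on $y_\infty$ places the limit equation in the limit-point case, and the truncated Weyl $m$-functions at $a$, which are precisely $r_n$, converge to the unique $m$-function $r_\infty$ of the limit problem. The variant with $y_n'(b_n)=0$ is handled identically, replacing $\phi_n(b_n)/\psi_n(b_n)$ by $\phi_n'(b_n)/\psi_n'(b_n)$.
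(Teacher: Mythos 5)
The paper itself does not prove this lemma --- it is recalled verbatim from \cite[Lemma 6.1]{BiCohlVolkmerA} --- so I can only judge your argument on its own terms. Your architecture is the natural one and is sound: reduce everything to convergence of the single scalar $r_n=y_n'(a)/y_n(a)$ (note $y_n(a)\neq0$ and $\psi_n>0$ on $(a,b_n]$ do follow from your one-zero argument, since any nontrivial solution of \eqref{node} with $q_n<0$ would have a positive interior maximum between two zeros, where the equation forces $y_n''=-q_ny_n>0$), and then conclude by continuous dependence on coefficients and initial data. The reduction, the fundamental system $\phi_n,\psi_n$, and the identification of $r_\infty$ as the unique slope producing a bounded solution of \eqref{limitode} are all correct.

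The gap is in the crux step, where you rule out a subsequential limit $r_n\to r^\ast\neq r_\infty$. As written, you appeal to an unspecified ``Riccati-type comparison'' or ``WKB dichotomy'' that ``forces $y_n$ to align with the decaying solution'' --- but that alignment is precisely the statement $r_n\to r_\infty$ you are trying to prove, so nothing has been established. What actually closes the argument is a uniform a priori bound, and you already have every ingredient for it: since $z_n:=y_n/y_n(a)$ has at most one zero and $z_n(b_n)=0$, it has constant sign, say $z_n>0$, on all of $[a,b_n)$; then in the self-adjoint form $(\expe^{P_n}z_n')'=-q_n\expe^{P_n}z_n>0$, so $\expe^{P_n}z_n'$ is increasing and is $\le 0$ at $b_n$ (where $z_n$ reaches its zero from above), whence $z_n'\le0$ and $0<z_n(x)\le z_n(a)=1$ on the entire interval $[a,b_n]$, not merely near $b_n$. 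This uniform bound passes to any locally uniform subsequential limit: if $r_n\to r^\ast$ finite, the limit $\phi_\infty+r^\ast\psi_\infty$ satisfies $0\le\phi_\infty+r^\ast\psi_\infty\le1$ on $[a,\infty)$ and equals $1$ at $a$, hence is a bounded nontrivial solution of \eqref{limitode} and must be $y_\infty/y_\infty(a)$, giving $r^\ast=r_\infty$; if $|r_n|\to\infty$, then $|z_n|/|r_n|\le1/|r_n|\to0$ forces $\psi_\infty\equiv0$, a contradiction. The same computation (with $\expe^{P_n}z_n'$ vanishing at $b_n$) handles the variant $y_n'(b_n)=0$. With this one paragraph inserted in place of the Riccati/Weyl hand-wave, your proof is complete; without it, the decisive inequality is asserted rather than derived.
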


We also use the following well-known lemma.

\begin{lemma}\label{4:l2}
Let $D$ be a simply-connected domain in $\C$, and $a\in D$. For $n\in\N$, let $p_n,q_n,p_\infty,q_\infty: D\to\C$ be analytic functions such that
$p_n(z)\to p_\infty(z)$ and $q_n(z)\to q_\infty(z)$ locally uniformly for $z\in D$. For each $n\in\N$ let $y_n: D\to\C$ be a solution
of the differential equation
\[ y_n''+p_n(z)y_n'+q_n(z)y_n=0 ,\]
and let $y_\infty:D\to\C$ be a solution of
\[ y_\infty''+p_\infty(z)y_\infty'+q_\infty(z)y_\infty=0. \]
If
\[ y_n(a)\to y_\infty(a),\, y_n'(a)\to y_\infty'(a)\quad \text{as $n\to\infty$},\]
then
\[ y_n(z)\to y_\infty(z) \quad\text{ as $n\to\infty$ locally uniformly for $z\in D$.}\]
\end{lemma}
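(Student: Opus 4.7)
The plan is to reduce the claim to a standard Gronwall estimate for a first-order analytic ODE system. First, recast each scalar second-order equation as a first-order system by setting $W_n(z) := (y_n(z), y_n'(z))^\top$, so that $W_n'(z) = A_n(z) W_n(z)$ with
\[ A_n(z) = \begin{pmatrix} 0 & 1 \\ -q_n(z) & -p_n(z) \end{pmatrix}, \]
and likewise $W_\infty'(z) = A_\infty(z) W_\infty(z)$. The hypothesis $p_n\to p_\infty$, $q_n\to q_\infty$ locally uniformly on $D$ translates to $A_n\to A_\infty$ locally uniformly, while the initial-value hypothesis becomes $W_n(a)\to W_\infty(a)$ in $\C^2$.

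Fix an arbitrary compact set $K\subset D$. Since $D$ is an open connected subset of $\C$, enlarge $K\cup\{a\}$ to a compact, path-connected set $K^*\subset D$ with the property that every $z\in K^*$ is the endpoint of a rectifiable path $\gamma_z\subset K^*$ starting at $a$ whose length is bounded by a single constant $L$; concretely, cover $K$ by finitely many closed disks contained in $D$, polygonally join each disk center to $a$ inside $D$, and take $K^*$ to be the union of these finitely many disks and polygonal arcs. By the locally uniform convergence, there is $M>0$ with $\|A_n(z)\|, \|A_\infty(z)\|\le M$ for all $z\in K^*$ and all large $n$, together with $\varepsilon_n := \sup_{z\in K^*}\|A_n(z)-A_\infty(z)\|\to 0$.

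For $z\in K^*$, integrating both systems along $\gamma_z$ and using the analytic fundamental theorem of calculus (path-independence is guaranteed by simple connectedness of $D$) yields
\[ W_n(z) = W_n(a) + \int_{\gamma_z} A_n(\zeta) W_n(\zeta)\,d\zeta, \]
and the analogous identity for $W_\infty$. Subtracting and using the rearrangement $A_n W_n - A_\infty W_\infty = A_n(W_n - W_\infty) + (A_n - A_\infty) W_\infty$ gives
\[ W_n(z) - W_\infty(z) = \bigl(W_n(a) - W_\infty(a)\bigr) + \int_{\gamma_z}(A_n - A_\infty) W_\infty\,d\zeta + \int_{\gamma_z} A_n (W_n - W_\infty)\,d\zeta. \]
Since $W_\infty$ is continuous on the compact $K^*$ it is bounded there by some constant $B$, and Gronwall's lemma applied to $\|W_n(\gamma_z(\cdot)) - W_\infty(\gamma_z(\cdot))\|$ along an arclength parametrization of $\gamma_z$ yields
\[ \|W_n(z)-W_\infty(z)\| \le \bigl(\|W_n(a)-W_\infty(a)\| + BL\varepsilon_n\bigr)e^{ML}, \]
uniformly in $z\in K^*$. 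The right-hand side tends to $0$ as $n\to\infty$, and since $K\subset K^*$ was an arbitrary compact subset of $D$, this establishes locally uniform convergence of $y_n$ (the first component of $W_n$) to $y_\infty$ on all of $D$.

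The one point requiring care is the construction of the enlarged compact $K^*$ with a single uniform path-length bound $L$; once $K^*$ is in hand everything else is routine Picard--Gronwall theory in the complex-analytic setting, depending only on the locally uniform convergence of the coefficients together with the convergence of the data at the single point $a$.
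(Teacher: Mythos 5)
The paper states this lemma as ``well-known'' and gives no proof at all, so there is nothing to compare against; what matters is whether your argument is sound, and it is. The reduction to the first-order system $W'=A W$, the construction of the compact path-connected set $K^*$ with a uniform path-length bound $L$, the integral identity along $\gamma_z$, the splitting $A_nW_n-A_\infty W_\infty=A_n(W_n-W_\infty)+(A_n-A_\infty)W_\infty$, and the Gronwall estimate along an arclength parametrization together give exactly the claimed bound $\|W_n(z)-W_\infty(z)\|\le\bigl(\|W_n(a)-W_\infty(a)\|+BL\varepsilon_n\bigr)\expe^{ML}$ uniformly on $K$, which is the standard continuous-dependence argument and is correct.

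One small remark: the path-independence of $\int_{\gamma_z}W_n'(\zeta)\,{\mathrm d}\zeta$ does not really come from simple connectedness of $D$; it is just the fundamental theorem of calculus applied to the single-valued analytic primitive $W_n$, which exists because the lemma already hands you a globally defined solution $y_n\colon D\to\C$. Simple connectedness is what guarantees in general that such global single-valued solutions exist in the first place, but since their existence is hypothesized, your integral identity holds for any rectifiable path in $D$ regardless. This does not affect the validity of the proof.
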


\begin{thm}\label{4:t4}
For $n\in\N_0$ and $\nu\ge-\frac12$, we have
\begin{equation}\label{limit2}
 \frac{W_\nu^n(i(K'-\sigma),k)}{W_\nu^n(iK',k)}\to \expe^{-(n+\nu+1)\sigma}\quad \text{as $k\to 0$}
\end{equation}
locally uniformly for $|\Im \sigma|<\frac12\pi$.
\end{thm}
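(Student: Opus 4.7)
The plan is to exhibit $F_k(\sigma):=W_\nu^n(i(K'-\sigma),k)$ as the solution of a linear second-order ODE whose coefficients converge as $k\to 0$, and then invoke Lemmas \ref{4:l1} and \ref{4:l2}. Starting from the modified Lam\'e equation \eqref{2:modlame} and using the imaginary transformation identity $\dc(iu,k)=\dn(u,k')$, one sees that $E(u):=W_\nu^n(iu,k)$ satisfies
\begin{equation}
E''(u)=\bigl(\Lambda_\nu^n(k)-\nu(\nu+1)\dn^2(u,k')\bigr)E(u).
\label{Eode}
\end{equation}
The change of variable $u=K'-\sigma$ together with the identity $\dn(K'-\sigma,k')=k/\dn(\sigma,k')$ turns \eqref{Eode} into
\begin{equation}
F_k''(\sigma)=\left(\Lambda_\nu^n(k)-\frac{\nu(\nu+1)k^2}{\dn^2(\sigma,k')}\right)F_k(\sigma).
\label{Fkode}
\end{equation}
Writing \eqref{Fkode} as $F_k''+q_kF_k=0$, Lemma \ref{2:l2} forces $\Lambda_\nu^n(k)\to(n+\nu+1)^2$ (since $\omega\to 1$), and since $\dn(\sigma,k')\to \sech\sigma$ while $k^2\to 0$, the coefficient $q_k$ tends to $q_\infty(\sigma):=-(n+\nu+1)^2$. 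The limit ODE $F_\infty''=(n+\nu+1)^2F_\infty$ admits the unique bounded solution $\expe^{-(n+\nu+1)\sigma}$ on $[0,\infty)$ (up to a scalar), while every linearly independent solution is unbounded.

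Next I would apply Lemma \ref{4:l1} to a sequence $k_j\to 0$ with $b_j:=K'(k_j)\to\infty$. The sign condition $q_{k_j}<0$ holds on $[0,b_j]$ for small $k_j$: for $\nu\ge 0$ combine \eqref{2:lowerbound} with the estimate $\nu(\nu+1)k^2/\dn^2(\sigma,k')\le\nu(\nu+1)$ on $[0,K']$; for $-\tfrac12\le\nu<0$ the term $\nu(\nu+1)k^2/\dn^2(\sigma,k')$ is already non-positive, and $\Lambda_\nu^n(k)>0$ for $k$ sufficiently small by Lemma \ref{2:l2}. The right-hand boundary condition at $\sigma=b_j$ follows from the parity of $W_\nu^n$: since $W_\nu^n(t,k)$ is even (resp.\ odd) for $n$ even (resp.\ odd), direct computation gives $F_{k_j}'(b_j)=0$ when $n$ is even and $F_{k_j}(b_j)=0$ when $n$ is odd. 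Lemma \ref{4:l1} therefore yields
\begin{equation}
\frac{F_{k_j}(\sigma)}{F_{k_j}(0)}\to\expe^{-(n+\nu+1)\sigma},\qquad \frac{F_{k_j}'(\sigma)}{F_{k_j}(0)}\to -(n+\nu+1)\expe^{-(n+\nu+1)\sigma},
\label{realconv}
\end{equation}
uniformly on compact subsets of $[0,\infty)$.

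Finally I would upgrade \eqref{realconv} to locally uniform convergence on the complex strip $D:=\{\sigma\in\C:|\Im\sigma|<\pi/2\}$ via Lemma \ref{4:l2}. The poles of $\dn(\cdot,k')$ sit at $\sigma=iK(k)$ modulo periods, and its zeros at $\sigma=K'+iK(k)$ modulo periods; since $K(k)\ge\pi/2$ with equality only at $k=0$, and $K'(k)\to\infty$, all these singularities stay outside any fixed compact subset of $D$ for all $k$ sufficiently small, so $1/\dn^2(\sigma,k_j')$ is analytic on any such compact subset and $k_j^2/\dn^2(\sigma,k_j')\to 0$ locally uniformly on $D$. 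Fix any $\sigma_0>0$; then \eqref{realconv} supplies the hypothesis of Lemma \ref{4:l2} at the base point $a=\sigma_0$ for the normalized sequence $G_{k_j}:=F_{k_j}/F_{k_j}(0)$, and Lemma \ref{4:l2} transfers convergence of initial data into locally uniform convergence on the simply connected domain $D$, giving \eqref{limit2}. The main technical obstacle is precisely this last analytic step: one must guarantee that the singular locus of $1/\dn^2(\sigma,k')$ is banished from every prescribed compact subset of $D$ uniformly as $k\to 0$, and this is what the combination $K(k)\ge\pi/2$ together with $K'(k)\to\infty$ furnishes.
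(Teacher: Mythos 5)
Your proof is correct and follows essentially the same route as the paper's: pass to the ODE $F_k''+q_kF_k=0$ with $q_k=-\Lambda_\nu^n(k)+\nu(\nu+1)k^2\nd^2(\sigma,k')$, check $q_k<0$ and $q_k\to-(n+\nu+1)^2$, apply Lemma \ref{4:l1} on $[0,K']$ using the parity boundary condition at $\sigma=K'$, and upgrade to the strip $|\Im\sigma|<\tfrac{\pi}{2}$ via Lemma \ref{4:l2}. (Incidentally, your parity assignment --- $F_k'(K')=0$ for $n$ even and $F_k(K')=0$ for $n$ odd --- is the correct one; the paper's proof states it with even and odd interchanged, a harmless typo since Lemma \ref{4:l1} covers both boundary conditions.)
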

\begin{proof}
The function $w(\sigma)=W_\nu^n(i(K'-\sigma),k)$ satisfies the differential equation
\begin{equation}\label{ode4}
 \frac{{\mathrm d}^2w}{d\sigma^2}+q(\sigma,k)w=0,
\end{equation}
where
\[ q(\sigma,k):=-\Lambda_\nu^n(k)+\nu(\nu+1)k^2\nd^2(\sigma,k') .\]
It follows from Lemma \ref{2:l2} and \eqref{2:lowerbound} that $q(\sigma,k)<0$ for all $\sigma\in\R$.
Moreover, $\nd(\sigma,k')\to\cosh \sigma$ as $k\to 0$ locally uniformly for $|\Im \sigma|<\frac12\pi$, so
\begin{equation}\label{4:limit3}
 q(\sigma,k)\to -(n+\nu+1)^2\quad \text{as $k\to 0$}
\end{equation}
locally uniformly for $|\Im \sigma|<\frac12\pi$.
We have $w(K')=0$ if $n$ is even and $w'(K')=0$ if $n$ is odd.
Since $K'(k)\to\infty$ as $k\to0$,  we can apply Lemma~\ref{4:l1} (with $a=0$) and obtain
\eqref{limit2} and its differentiated form with uniform convergence for $\sigma\in[0,1]$. Local uniform convergence for
$|\Im \sigma|<\frac12\pi$ follows from Lemma \ref{4:l2}.
\end{proof}

\subsection{The peanut expansion of the $1/r$ potential in the limit $k\to 1$}

In spherical coordinates we have internal spherical harmonics
\[G_n^m(\r)=r^n \PP_n^m(\cos\theta) \expe^{im\phi}\]
and external spherical harmonics
\[H_n^m(\r)=r^{-n-1} \PP_n^m(\cos\theta) \expe^{im\phi},\]
where $\PP_n^m$ is the
Ferrers function of the first kind
(associated Legendre function of the first kind on-the-cut)
\cite[(14.3.1)]{NIST:DLMF} with
integer degree $n$ and integer order $m$.
Spherical harmonics are harmonic functions (solutions $u(\r)$ of Laplace's equation $-\Delta u=0$) of $\r=(x,y,z)$ expressed in spherical coordinates.
The functions $G_n^m$ are harmonic on $\R^3$ whereas the functions $H_m^n$ are harmonic on $\R^3\setminus\{\0\}$.
Let $\r,\r^\ast\in\R^3$ be two points with $r<r^\ast$. Then we have the well-known multipole expansion
of the fundamental solution of Laplace's
equation
which is crucial in a great many applications in mathematical physics
\cite[p.~1273-1274, (10.3.37)]{MorseFesh}
\begin{eqnarray}\label{1:expansion}
&&\hspace{-2.5cm}\frac{1}{\|\r-\r^\ast\|}= \sum_ {n=0}^{\infty} \sum_{m=-n}^{n} \frac{(n-m)!}{(n+m)!} G_n^m(\r)\overline{H_{n}^m(\r^\ast)}
\\
&&\hspace{-0.90cm}=\sum_{n=0}^\infty
\frac{r^n}{(r^\ast)^{n+1}}
\sum_{m=-n}^n
\frac{(n-m)!}{(n+m)!}
{\sf P}_n^m(\cos\theta)
{\sf P}_n^m(\cos\theta^\ast)\expe^{im(\phi-\phi^\ast)}
\label{git}\\
&&\hspace{-0.90cm}=\sum_{n=0}^\infty
\frac{r^n}{(r^\ast)^{n+1}}
P_n(\cos\theta\cos\theta^\ast+\sin\theta\sin\theta^\ast\cos(\phi-\phi^\ast)),
\label{Laplace}
\end{eqnarray}
where $P_n={\sf P}_n^0$ is the Legendre polynomial, and
\eqref{Laplace} is the famous Laplace expansion
of the $1/r:=1/\|\r-\r^\ast\|$ potential.
This expansion can be written in the form 
\begin{equation}\label{exp2}
\frac{1}{\|\r-\r^\ast\|}=\sum_{m\in\Z}\expe^{im(\phi-\phi^\ast)}\sum_{n=0}^\infty B_{m,n}(r,r^\ast,\theta,\theta^\ast),
\end{equation}
where
\begin{equation}
B_{m,n}=\frac{r^{|m|+n}}{(r^\ast)^{|m|+n+1}}\frac{n!}{(2|m|+n)!}
\PP^{|m|}_{|m|+n}(\cos \theta)\PP^{|m|}_{|m|+n}(\cos \theta^\ast),
\end{equation}
and $r,\theta,\phi$ are spherical coordinates of $\r$ and $r^\ast,\theta^\ast,\phi^\ast$ are spherical coordinates of $\r^\ast$ provided $r<r^\ast$.
 
Returning to flat-ring coordinates, if we substitute \eqref{3:intpeanut}, \eqref{3:extpeanut} and \eqref{4:R} in \eqref{3:expansion2}, we obtain
\[ \frac{1}{\|\r-\r^\ast\|} =\sum_{m\in\Z}\expe^{im(\phi-\phi^\ast)}\sum_{n=0}^\infty A_{m,n}(\sigma,\sigma^\ast,\tau,\tau^\ast,k), \]
where
\begin{eqnarray*}
&&\hspace{-2.1cm}A_{m,n}=2 \left(\frac{1-\sn(\sigma,k)\dn(\tau,k')}{\dn(\sigma,k)\sn(\tau,k')} \right)^{1/2}\left(\frac{1-\sn(\sigma^\ast,k)\dn(\tau^\ast,k')}{\dn(\sigma^\ast,k)\sn(\tau^\ast,k')} \right)^{1/2}\\[0.25cm]
&&\hspace{1cm}\times W_{|m|-\frac12}^n(K'-\tau,k')W_{|m|-\frac12}^n(K'-\tau^\ast,k')\\
 &&\hspace{1cm}\times\frac{1}{w_{m}^n(k)} W_{|m|-\frac12}^n(i(K+\sigma),k')W_{|m|-\frac12}^n(i(K-\sigma^\ast),k'),
\end{eqnarray*}
where $\sigma=s-K$, $\sigma^\ast=s^\ast-K$, $s<s^\ast$, $\tau=K'-t$, $\tau^\ast=K'-t^\ast$.

We now prove the main result of this section.
\begin{thm}\label{4:t5}
Let $m\in\Z$, $n\in\N_0$, $\tau,\tau^\ast\in(0,\pi)$, $\sigma, \sigma^\ast\in \R$.
Then
\[ A_{m,n}(\sigma,\sigma^\ast,\tau,\tau^\ast,k)\to B_{m,n}(\expe^\sigma,\expe^{\sigma^\ast},\tau,\tau^\ast)\quad\text{as $k\to1$}.
\]
\end{thm}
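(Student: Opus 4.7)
The plan is to take the $k\to 1$ limit of $A_{m,n}$ block by block, splitting the defining expression into three pieces: (i) the geometric prefactor $2(RR^\ast)^{-1/2}$ from the $1/\sqrt{R}$ normalization of the peanut harmonics, (ii) the product of real-argument Lam\'e-Wangerin factors $W^n_{|m|-\frac12}(K'-\tau,k')\,W^n_{|m|-\frac12}(K'-\tau^\ast,k')$, and (iii) the ratio of the two imaginary-argument Lam\'e-Wangerin factors to the Wronskian constant $w^n_m$.

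Block (i) is immediate from \eqref{4:R} and \eqref{Rlimit}: the prefactor tends to $2(\expe^\sigma\sin\tau\cdot\expe^{\sigma^\ast}\sin\tau^\ast)^{-1/2}$. Block (ii) is a direct application of Corollary~\ref{c1}, in which the inner modulus of the Lam\'e-Wangerin function and the associated complete elliptic integral appear in the combination $K-\tau$; here these play the roles of $k'$ and $K(k')=K'$, respectively, so letting $k'\to 0$ (that is, $k\to 1$) and multiplying the two limits yields
\[
W^n_{|m|-\frac12}(K'-\tau,k')\,W^n_{|m|-\frac12}(K'-\tau^\ast,k')\longrightarrow \frac{(|m|+n+\frac12)\,n!}{(2|m|+n)!}\sqrt{\sin\tau\sin\tau^\ast}\,\PP^{|m|}_{|m|+n}(\cos\tau)\PP^{|m|}_{|m|+n}(\cos\tau^\ast),
\]
the two factors of $(-1)^{|m|}$ from Corollary~\ref{c1} cancelling upon multiplication.

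Block (iii) requires the real work. Write $U(s):=W^n_{|m|-\frac12}(is,k')$ and $U_0:=U(K)$. Evaluating \eqref{3:wronskian} at $s=K$ and using $V(s)=U(2K-s)$ gives the constant $w^n_m=2U_0\,U'(K)$, after which
\[
\frac{W^n_{|m|-\frac12}(i(K+\sigma),k')\,W^n_{|m|-\frac12}(i(K-\sigma^\ast),k')}{w^n_m}=\frac{U_0}{2U'(K)}\cdot\frac{U(K+\sigma)}{U_0}\cdot\frac{U(K-\sigma^\ast)}{U_0}.
\]
Under the same transcription of modulus as in block (ii), Theorem~\ref{4:t4} gives the last two ratios as $\expe^{(n+|m|+\frac12)\sigma}$ and $\expe^{-(n+|m|+\frac12)\sigma^\ast}$ respectively, the first being obtained by replacing $\sigma$ with $-\sigma$ in the symmetric strip $|\Im\sigma|<\pi/2$ of locally uniform convergence. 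Differentiating the second ratio at $\sigma^\ast=0$ (legitimate for locally uniform limits of analytic functions by Cauchy estimates) yields $U'(K)/U_0\to n+|m|+\tfrac12$, and therefore $U_0/(2U'(K))\to 1/(2n+2|m|+1)$.

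Multiplying the three block limits, the combinatorial coefficient collapses via $\tfrac{2(|m|+n+\frac12)}{2n+2|m|+1}=1$ down to $n!/(2|m|+n)!$, the $\sqrt{\sin\tau\sin\tau^\ast}$ factors from blocks (i) and (ii) cancel, and the remaining exponentials telescope as $\expe^{-\sigma/2-\sigma^\ast/2}\cdot\expe^{(n+|m|+\frac12)\sigma-(n+|m|+\frac12)\sigma^\ast}=\expe^{(n+|m|)\sigma-(n+|m|+1)\sigma^\ast}=r^{|m|+n}/(r^\ast)^{|m|+n+1}$ with $r=\expe^\sigma$, $r^\ast=\expe^{\sigma^\ast}$. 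The result is exactly $B_{m,n}(\expe^\sigma,\expe^{\sigma^\ast},\tau,\tau^\ast)$. The principal technical obstacle is the Wronskian evaluation in block (iii) together with the differentiation of Theorem~\ref{4:t4}'s limit at $\sigma=0$; these are what produce the $1/(2n+2|m|+1)$ factor that is essential for matching the combinatorial coefficient in $B_{m,n}$.
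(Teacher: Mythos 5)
Your proposal is correct and follows essentially the same route as the paper: the same three-factor decomposition, with the prefactor handled by \eqref{Rlimit}, the real-argument factors by Corollary~\ref{c1}, and the imaginary-argument factors by Theorem~\ref{4:t4}. The only difference is that you make explicit the step the paper leaves implicit, namely that $w_m^n=2U(K)U'(K)$ and that $U'(K)/U(K)\to |m|+n+\tfrac12$ by differentiating the locally uniform limit of Theorem~\ref{4:t4}, which is exactly what is needed to justify the paper's one-line citation there.
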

\begin{proof}
It is enough to consider $m\in\N_0$.
By \eqref{Rlimit} we have
as $k\to 1$,
\[  \left(\frac{1-\sn(\sigma,k)\dn(\tau,k')}{\dn(\sigma,k)\sn(\tau,k')} \right)^{1/2}\to  \expe^{-\frac12\sigma}(\sin\tau)^{-1/2}.\]
By Corollary \ref{c1}, we have
as $k\to 1$ that
\begin{eqnarray*}
&& \hspace{-1.5cm}W_{m-\frac12}^n(K'-\tau,k')W_{m-\frac12}^n(K'-\tau^\ast,k')\\
&&\hspace{0.7cm}\to \frac{(m+n+\frac12) n!}{(2m+n)!} (\sin\tau)^{1/2}\PP_{m+n}^m(\cos\tau) (\sin\tau^\ast)^{1/2}\PP_{m+n}^m(\cos\tau^\ast).
\end{eqnarray*}
By Theorem \ref{4:t4}, we have
as $k\to1$ that
\[ \frac{2}{w_{m}^n(k)} W_{m-\frac12}^n(i(K+\sigma),k')W_{m-\frac12}^n(i(K-\sigma^\ast),k')\to \frac{\expe^{(m+n+\frac12)\sigma} \expe^{-(m+n+\frac12)\sigma^\ast}}{m+n+\frac12}.\]
 Combining our results we obtain the statement of the theorem.
\end{proof}

This proves our assertion that the peanut expansion of the $1/r$ potential in the limit as $k\to 1$ becomes the famous multipole expansion of the $1/r$ potential in spherical coordinates.


\begin{thebibliography}{10}

\bibitem{BiCohlVolkmerA}
L.~{Bi}, H.~S. {Cohl}, and H.~{Volkmer}.
\newblock {Expansion for a fundamental solution of Laplace’s equation in
  flat-ring cyclide coordinates}.
\newblock {\em {\tt Submitted}}, 2022.

\bibitem{Bocher}
M.~{B\^{o}cher}.
\newblock {\em Ueber die Reihenentwickelungen der Potentialtheorie}.
\newblock B.~G.~Teubner, Leipzig, 1894.

\bibitem{Cohlerratum12}
H.~S. {Cohl}.
\newblock {Erratum:~``Developments in determining the gravitational potential
  using toroidal functions''}.
\newblock {\em Astronomische Nachrichten}, 333(8):784--785, 2012.

\bibitem{CT}
H.~S. {Cohl} and J.~E. {Tohline}.
\newblock {A Compact Cylindrical Green's Function Expansion for the Solution of
  Potential Problems}.
\newblock {\em The Astrophysical Journal}, 527:86--101, 1999.

\bibitem{CTRS}
H.~S. {Cohl}, J.~E. {Tohline}, A.~R.~P. {Rau}, and H.~M. {Srivastava}.
\newblock {Developments in determining the gravitational potential using
  toroidal functions}.
\newblock {\em Astronomische Nachrichten}, 321(5/6):363--372, 2000.

\bibitem{NIST:DLMF}
{\it NIST Digital Library of Mathematical Functions}.
\newblock \href{https://dlmf.nist.gov/}{\bf\tt\normalsize
  https://dlmf.nist.gov/}, Release 1.1.4 of 2022-01-15.
\newblock F.~W.~J. Olver, A.~B. {Olde Daalhuis}, D.~W. Lozier, B.~I. Schneider,
  R.~F. Boisvert, C.~W. Clark, B.~R. Miller, B.~V. Saunders, H.~S. Cohl, and
  M.~A. McClain, eds.

\bibitem{ErdelyiHTFIII}
A.~Erd{\'e}lyi, W.~Magnus, F.~Oberhettinger, and F.~G. Tricomi.
\newblock {\em Higher Transcendental Functions. {V}ol. {III}}.
\newblock Robert E. Krieger Publishing Co. Inc., Melbourne, Fla., 1981.

\bibitem{Ince}
E.~L. Ince.
\newblock {\em Ordinary {D}ifferential {E}quations}.
\newblock Dover Publications, New York, 1944.

\bibitem{Kellogg}
O.~D. Kellogg.
\newblock {\em Foundations of potential theory}.
\newblock Reprint from the first edition of 1929. Die Grundlehren der
  Mathematischen Wissenschaften, Band 31. Springer-Verlag, Berlin, 1967.

\bibitem{Miller}
W.~Miller, Jr.
\newblock {\em Symmetry and separation of variables}.
\newblock Addison-Wesley Publishing Co., Reading, Mass.-London-Amsterdam, 1977.
\newblock With a foreword by Richard Askey, Encyclopedia of Mathematics and its
  Applications, Vol. 4.

\bibitem{MorseFesh}
P.~M. Morse and H.~Feshbach.
\newblock {\em Methods of theoretical physics. 2 volumes}.
\newblock McGraw-Hill Book Co., Inc., New York, 1953.

\bibitem{Szego}
G.~Szeg{\H{o}}.
\newblock {\em Orthogonal polynomials}.
\newblock American Mathematical Society Colloquium Publications, Vol. 23.
  Revised ed. American Mathematical Society, Providence, R.I., 1959.

\bibitem{Volkmer84}
H.~Volkmer.
\newblock Integral representations for products of {L}am\'e functions by use of
  fundamental solutions.
\newblock {\em SIAM Journal on Mathematical Analysis}, 15(3):559--569, 1984.

\bibitem{Volkmer2018}
H.~Volkmer.
\newblock Eigenvalue problems for {L}am\'{e}'s differential equation.
\newblock {\em Symmetry, Integrability and Geometry: Methods and Applications
  (SIGMA)}, 14:131, 21 pages, 2018.

\bibitem{Wangerin1875}
A.~{Wangerin}.
\newblock {Reduction der Potentialgleichung f\"ur gewisse Rotationsk\"orper auf
  eine gew\"ohnliche Differentialgleichung.}
\newblock {Preisschr. der Jabl. Ges. Leipzig, Hirzel}, 1875.

\end{thebibliography}

\def\cprime{$'$} \def\dbar{\leavevmode\hbox to 0pt{\hskip.2ex \accent"16\hss}d}

\end{document}